\documentclass[12pt, reqno]{amsart}

\usepackage[a4paper,margin=1in]{geometry}
\usepackage{array}
\usepackage{amsmath,amsthm,amsfonts,amssymb,mathtools}
\usepackage{newtxtext,newtxmath} % modern replacement

\usepackage{mathrsfs}
\usepackage{enumerate}
\usepackage{xcolor}
\usepackage[all,cmtip]{xy}
\usepackage{tikz-cd} 
\usepackage{hyperref}
\usepackage[capitalise]{cleveref}
\usepackage{microtype}
\usepackage{graphicx}
\usepackage{enumitem}
\setlist[enumerate]{label=(\roman*)}

\hypersetup{
	colorlinks=true,
	linkcolor=black,
	citecolor=blue,
	urlcolor=blue,
	pdftitle={},
	pdfauthor={}
}

\newtheorem{thm}{Theorem}[section]
\newtheorem{lem}[thm]{Lemma}
\newtheorem{proposition}[thm]{Proposition}
\newtheorem{corol}[thm]{Corollary}
\newtheorem{ex}{ex}[section]

\newenvironment{lemma}{\begin{lem}\;}{\end{lem}}
\newenvironment{prop}{\begin{proposition}\;}{\end{proposition}}
\newenvironment{theorem}{\begin{thm}\;}{\end{thm}}
\newenvironment{definition}{\begin{defin}\;}{\end{defin}}
\newenvironment{cor}{\begin{corol}\;}{\end{corol}}

\theoremstyle{remark}
\newtheorem{remark}[thm]{Remark}
\newtheorem{example}[thm]{\bf Example}
\newtheorem{qstn}[ex]{\bf Question}
\newtheorem{defin}{\bf Definition}

\newcommand{\Hom}{{\rm Hom}}

\newcommand{\End}{{\rm End}}

\mathchardef\mhyphen="2D

\newcommand{\isomor}{\,\raisebox{4pt}{$\sim$}{\kern -.75em\to}\,}

\newcommand{\gm}{\mathbb G_m}

\newcommand{\ul}{\underline}

\newcommand{\hkr}{\hookrightarrow}

\newcommand{\inv}{{}^{-1}}

\newcommand{\topet}{\!{}_{\rm\acute{e}t}}

\def\PP{\mathbb P}
\def\OO{{\mathscr O}}

\newcommand{\pic}{{\rm Pic}}

\newcommand{\ZZ}{\mathbb Z}
\newcommand{\CC}{\mathbb C}

\title[Algebraic and Analytic Brauer groups of homogeneous spaces]{Algebraic and Analytic Brauer Groups of Homogeneous Spaces}
\author[S. Bhaumik and P. Saha]{Saurav Bhaumik \and Pinakinath Saha}
\address{\texttt Department of Mathematics, IIT Bombay, Mumbai 400076, India.}
\email{\texttt saurav@math.iitb.ac.in (S. Bhaumik)}
\address{\texttt Department of Mathematics, IIT Delhi, Delhi 110016, India.}
\email{\texttt pinakinath@maths.iitd.ac.in (P. Saha)}
\keywords{Algebraic groups, Homogeneous spaces, Picard group, Brauer group,  Central Extension of algebraic groups, \'Etale cohomology}
\subjclass[2020]{14F22, 14M15, 14M17, 20G10}
\date{\today}

\begin{document}
	
	\begin{abstract}
		In this article, we compute both the algebraic and the analytic Brauer groups of a homogeneous space under the action of a connected, simply connected, semisimple complex algebraic group, where the stabilizer subgroup is closed, and connected.
	\end{abstract}

	\maketitle
	\tableofcontents

	\section{Introduction}
	Throughout this article, all varieties are defined over the field $\CC$ of complex numbers.
	Let $G$ be a connected, simply connected, semisimple complex algebraic group. Let $H$ be a closed connected subgroup of $G$. Consider the homogeneous space $G/H$, which is a smooth quasi-projective variety.
	\medskip
	
	Iversen \cite{Iversen} studied the Brauer group of a connected linear algebraic group defined over an algebraically closed field. Iversen showed that the Brauer group of a character free linear algebraic group is a finite group which is zero if and only if its fundamental group is cyclic. Haboush \cite{Haboush} studied the Brauer groups of homogeneous spaces. In \cite{Haboush}, Haboush constructed an injective group homomorphism  
	\begin{equation}\label{I:eq1}
		{\rm E}_{\rm al}(H,\gm)\longrightarrow {\rm Br}(G/H),
	\end{equation}
	where ${\rm E}_{\rm al}(H,\gm)$ is the set of equivalence classes of central $\gm$-extensions of $H$, which forms a group under Baer sum, and ${\rm Br}(G/H)$ is the Brauer group (classes of Azumaya algebras modulo an equivalence relations, cf. \cite[IV, \S 2]{Milne}) of $G/H$. The map in \cref{I:eq1} is 
	defined as follows. Given a class $\xi\in {\rm E}_{\rm al}(H, \gm)$, there is a projective representation $\rho_{\xi}:H\to {\rm PGL}_{n}$ such that the pullback of the central extension $1\to \gm \to {\rm GL_n}\to {\rm PGL}_n\to 1$ via $\rho_{\xi}$ is $\xi$. We then send  $\xi$ to the class $q_{*}(\rho_{\xi})$ of the $\PP^{n-1}$-fibration over $G/H$ associated to $\rho_{\xi}$ and the principal $H$-bundle $q: G\rightarrow G/H$ (cf.~\cite[Proposition~2.16 and Proposition~3.5]{Haboush}).
	
	 In this article, we prove that the map in \cref{I:eq1} is an isomorphism (cf. \cref{brauergp}).

     Every smooth algebraic variety $X$ gives rise to an analytic space in the sense of GAGA (cf. \cite{Serre}) which we still denote by $X$. Then there are natural maps \begin{equation}\label{pic-alg-an}
		\pic(X)\to \pic^{\rm an}(X),
	\end{equation}
	\begin{equation}\label{brauer-alg-an} 
		{\rm Br}(X)\to {\rm Br}^{\rm an}(X),
	\end{equation}
	where $\pic^{\rm an}(X)$ and ${\rm Br}^{\rm an}(X)$ denote respectively the analytic Picard group and analytic Brauer group of the analytic space $X$ (cf. \S\ref{prelim} for the precise definition).
	These maps are usually not isomorphisms (cf. \cref{ex-1} and \cref{ex-2}). 
	\medskip
	
	As before, let $G$ be a connected, simply connected, semisimple algebraic group, let $H$ be a closed connected subgroup, and let $M=G/H$. Then we prove the following results.
	\begin{enumerate}	
		\item The map in \cref{pic-alg-an} for $M$ induces an isomorphism at the level of $n$-cotorsions (\cref{brauer-3}).
	
		\item If $H$ is reductive, then the map in \cref{pic-alg-an} for $M$ is an isomorphism (cf. \cref{picard: alg-ana}). Moreover, we provide an example which shows that the map in \cref{pic-alg-an} is not necessarily an isomorphism when $H$ is not reductive (cf. \cref{remark: picard}).
		
		\item The map in \cref{brauer-alg-an} for $M$ is an isomorphism (cf. \cref{cor: baruer}). 
	\end{enumerate}
	\medskip
	
	Finally, we compute the Brauer group ${\rm Br}(M)$. One of our main result is that there is a natural isomorphism ${\rm Ext}^{1}(\pi_1(H),\ZZ)\simeq {\rm Br}(M)$ (cf.\cref{theorem5.5}), where $\pi_{1}(H)$ denotes the topological fundamental group of $H$.
	
	When $H$ is semisimple, we have an identification $X(\pi_{1}(H))\simeq {\rm E}_{\rm al}(H, \gm )$,  where $X(-):={\rm Hom}(-,\gm)$ (cf. \cite[Proposition 3.8.]{Iversen76}). Under this identification, the map in \cref{I:eq1} gives a map $X(\pi_1(H))\to {\rm Br}(M).$
	We show that this map can be re-interpreted in terms of highest weight theory (cf. \S 2, \cref{geomteric}).
	\medskip

	Let $P$ be a principal ${\rm PGL}_n$-bundle on a variety $X$. The obstruction to $P$ being Zariski locally trivial is given by a cohomology class in the Brauer group ${\rm Br}(X)$ (cf. \cite[IV]{Milne}). It is natural to ask if given any algebraic group $G$, the obstruction to a principal $G$-bundle on a variety $X$ being Zariski locally trivial can be found in the Brauer group ${\rm Br}(X)$. A consequence of our computation of the Brauer group of homogeneous varieties is that this is not possible. Indeed, we show that for any non-special simply connected semisimple algebraic group $H$ embedded in ${\rm SL}_N$, the homogeneous variety ${SL}_{N}/H$ has trivial Brauer group, while the principal $H$-bundle ${\rm SL}_{N}\to {\rm SL}_{\rm N}/H$ is not Zariski locally trivial.

	\section{Notations and Preliminaries}\label{prelim}
	
	In this section, we recall basic definitions and standard facts from the theory of algebraic and analytic Brauer groups. We refer the reader to \cite{Gro1}, \cite{Gro2}, \cite{Gro3}, \cite{Schroer05}, \cite{Iversen}, \cite{Artin-Mumford}, \cite{Beauvile}, \cite{Har-I}, and \cite{Milne}.
	
\medskip
	
{\em Notations.} Throughout this article, all varieties are defined over the field $\CC$ of complex numbers. For a variety $X$, we also denote by $X$ the associated analytic space. We denote the topological fundamental group of $X$ by $\pi_{1}(X)$. We denote ``$\simeq$" to indicate an isomorphism.
\medskip

\subsection{Azumaya algebras and principal ${\rm PGL}_r$-bundles}
From \cite{Gro1}, \cite[III, and IV]{Milne} we recall the following definition Azumaya algebra and principal ${\rm PGL}_r$-bundles and their correspondence.
\medskip

Let $X$ be a variety.
\begin{definition}
	An \emph{Azumaya algebra} $A$ over $X$ is a coherent sheaf of $\OO_{X}$-algebras such that for every $x\in X$, the stalk $A_x$ is locally free of finite rank as an $\OO_{X,x}$-module and the map 
	\[A_x \otimes_{\OO_{X,x}} A_x^{\circ} \to {\rm End}_{\OO_{X,x}}(A_x)\] sending $a\otimes a'$ to the endomorphism $(x\mapsto axa')$ is an isomorphism, where $A_x^{\circ}$ denotes the opposite algebra to $A_x$ (in other words, $A_x$ is an Azumaya algebra over the local ring $\OO_{X,x}$).
\end{definition}

\begin{definition}
Two Azumaya algebras $A$ and $A'$ over $X$ are
said to be \emph{Morita equivalent} if there exist locally free $\OO_X$-modules $E$ and $E'$ of finite ranks such that
\[
A \otimes_{\OO_X} \ul{\End}_{\OO_X}(E) \simeq
A' \otimes_{\OO_X} \ul{\End}_{\OO_X}(E').
\]
\end{definition}
Note that Morita equivalence is an equivalence relation, because for any two locally free $\OO_{X}$-modules $E$ and $E'$, we have $\ul{{\rm End}}(E)\otimes \ul{{\rm End}}(E') \simeq \ul{{\rm End}}(E\otimes E')$.

The set ${\rm Br}(X)$ of equivalence classes of
Azumaya algebras over $X$ forms a group under the operation $[A][A'] = [A \otimes A']$,
where the identity element is $[\OO_X]$ and $[A]^{-1}=[A^{\circ}]$. The group ${\rm Br}(X)$ is called the \emph{Brauer group} of $X$.
\medskip

There is a correspondence between isomorphism classes of Azumaya algebras of rank $r^2$ and of principal ${\rm PGL}_r$-bundles on $X$ locally trivial in the \'etale topology. Both are classified by the pointed set $H^1\topet(X, {\rm PGL}_r)$ (cf.~\cite[Theorem 2.5, IV]{Milne}). The correspondence is given as follows. Given an Azumaya algebra $A$ of rank $r^2$ over $X$, there exists an \'etale covering $\mathcal{U}=(U_i\to X)$ such that \[A\otimes_{\OO_{X}}\OO_{U_i}\simeq {\rm M}_{r}(\OO_{U_i})\] for each $i$. Since $\ul{{\rm Aut}}(M_{r}(\OO_X))={\rm PGL}_r$, $A$ determines a principal ${\rm PGL}_r$-bundle on $X$, thus a class in $H^{1}_{\topet}(X, {\rm PGL}_r)$. Conversely, given a ${\rm PGL}_r$-bundle $P$ on $X$, the associated bundle $P\times^{{\rm PGL}_r} {\rm M}_r$ is an Azumaya algebra over $X$ of rank $r^2$. These two constructions are inverse to each other up to isomorphism.
\medskip

Let $P_1\to X$ be a principal ${\rm PGL}_{r_1}$-bundle and $P_2\to X$ be a principal ${\rm PGL}_{r_2}$-bundle in the \'etale topology. The homomorphism
$$
{\rm PGL}_{r_1}\times {\rm PGL}_{r_2}\longrightarrow {\rm PGL}_{r_1r_2},\qquad (\alpha, \alpha')\mapsto \alpha\otimes \alpha'
$$
yields a principal ${\rm PGL}_{r_1r_2}$-bundle $P_1\otimes P_2$. Two bundles $P$ and $P'$ are called \emph{Brauer equivalent} if there exist locally free $\OO_X$-modules $E_1,E_2$ of ranks $r_2,r_1>0$ respectively, such that
$P\otimes \PP(E_1)\simeq P'\otimes \PP(E_2)$.
\medskip

Let $A$ be an Azumaya algebra of rank $r^2$, whose corresponding ${\rm PGL}_r$-bundle is $P$. Then the Azumaya algebra corresponding to $P\otimes \PP(E)$ is isomorphic to $A\otimes_{\OO_X} \ul{{\rm End}}_{\OO_X}(E)$. It follows that two bundles are Brauer equivalent if and only if their Azumaya algebras are Morita equivalent. 

Let $A$ and $A'$ be the Azumaya algebras whose corresponding bundles are $P$ and $P'$, respectively. Then the Azumaya algebra $A\otimes_{\OO_{X}} A'$ corresponds to the bundle $P\otimes P'$.

Therefore the Brauer group ${\rm Br}(X)$ can be thought as the group of Brauer equivalence classes of principal ${\rm PGL}_r$-bundles over $X$ for $r\ge 1$, where the group law is induced by the tensor product of principal bundles.

\subsection{Algebraic and Analytic Cohomological  Brauer Groups}

	As $\ul{{\rm Aut}}(\PP^{r-1})={\rm PGL}_r$,	the $\PP^{r-1}$-bundles over $X$, locally trivial in the \'etale topology are classified by the cohomology set $H^{1}_{\topet}(X,{\rm PGL}_{r})$.
	The short exact sequence of sheaves
	\[
	1 \to \gm \to {\rm GL}_{r} \to {\rm PGL}_{r} \to 1
	\]
	gives a connecting map
	\[
	H^{1}_{\topet}(X,{\rm PGL}_{r}) \longrightarrow H^{2}_{\topet}(X,\gm).
	\]
	This associates to each $\PP^{r-1}$-bundle a torsion class in  $H^{2}_{\topet}(X,\gm)$. The class measures the obstruction to coming from an actual vector bundle.  It is zero exactly when the $\PP^{r-1}$-bundle is the projectivization of some rank $r$-vector bundle.

	\begin{definition}
		The \emph{cohomological Brauer group} of $X$ is defined as the torsion subgroup of $H^2_{\topet}(X,\gm)$ and it is denoted by ${\rm Br}'(X)$.
	\end{definition}

	\begin{definition}
		The \emph{analytic Brauer group} of $X$ is defined in analogy with the algebraic case and it is denoted by ${\rm Br}^{\rm an}(X)$. 
	\end{definition}
	
	\begin{definition}
		The \emph{analytic cohomological Brauer group} of $X$ is defined as the torsion subgroup of $H^2(X, \OO_{X}^{an, \times})$, where $\OO_{X}^{an}$ denotes the sheaf of holomorphic functions on $X$, viewed as an analytic space. It is denoted by ${\rm Br'}^{\rm an}(X)$. 
	\end{definition}

	There are inclusions,
	 ${\rm Br}(X)\hookrightarrow {\rm Br}'(X)$ and
	$
	{\rm Br}^{\rm an}(X)\hookrightarrow {\rm Br'}^{\rm an}(X),
	$
	which are not surjective in general. Grothendieck~\cite{Grothendieck} asked whether these are bijective. This is a major open problem in the theory of Brauer groups.
	Grothendieck himself showed that equality holds for smooth
	algebraic surfaces \cite{Grothendieck}, and  Schr\"oer treated the case of algebraic surfaces with isolated singularities \cite{Schroer}.
	
	By Gabber's theorem the map ${\rm Br}(X)\to {\rm Br}'(X)$ is an isomorphism for quasi-projective varieties (cf. \cite[Theorem~1.1.]{Jong2005ARO}). If $X$ is a smooth variety, then ${\rm Br'}(X)=H^2\topet(X,\gm)$.

	For a variety $X$,   we denote the algebraic (respectively, analytic) Picard group of $X$ by ${\rm Pic}^{}(X)$ (respectively, ${\rm Pic}^{\rm an}(X)$).
	There is a natural map ${\rm Pic}^{}(X)\to {\rm Pic}^{\rm an}(X),$ which takes an algebraic line bundle to the corresponding analytic line bundle, is not a bijection in general. If $X$ is projective, then this map is an isomorphism (cf. \cite{Serre}).	
	
A complex-analytic space $X$ is called \emph{Stein} if $H^{j}(X, \mathcal{F}) = 0$ for all $j\ge 1$ and all coherent $\OO_X^{an}$-modules $\mathcal{F}$. If $X$ is an algebraic affine variety, then the associated complex analytic space is Stein (cf. \cite{Har-II}).

	\begin{example}\label{ex-1}
		Let $X=\gm\times \gm$ over $\CC$. Since $X$ is smooth affine and its coordinate ring is a UFD, the algebraic Picard group ${\rm Pic}^{}(X)$ is trivial. On the other hand, $X$ is Stein; by using the exponential exact sequence
		$$
		0\to\ul{\ZZ} \to \OO_{X}^{an} \to \OO^{ an, \times}_{X}\to 1,
		$$
		we have 
		$$
		{\rm Pic}^{\rm an}(X)=H^2(X,\ul{\ZZ})=H^2_{\rm sing}(S^1\times S^1,\ZZ)=\ZZ.
		$$
	\end{example}
	
	\medskip
	
	For a variety $X$, there is a natural surjective map
	$$
	{\rm Br'}^{}(X)\longrightarrow {\rm Br'}^{\rm an}(X)
	$$
	(cf. \cite[Proposition~1.3.]{Schroer05}), which is not injective in general. If $X$ is projective, this map is an isomorphism (cf. \cite[Proposition~1.3.]{Schroer05}).
	
	\begin{example}\label{ex-2}
		Let $X=\gm\times \gm$ over $\CC$. Since $X$ is smooth, we have ${\rm Br'}^{}(X)=H^2_{\topet}(X,\gm)$. From the Kummer sequence
		$$
		1\to \mu_n\to \gm \xrightarrow{(\cdot)^n} \gm \to 1,
		$$
		we obtain an exact sequence
		$$
		H^1_{\topet}(X,\gm)\to H^2_{\topet}(X,\mu_n)\to H^2_{\topet}(X,\gm)\xrightarrow{n} H^2_{\topet}(X,\gm).
		$$
		Since ${\rm Pic}(\mathbb{A}^2)=0$, and $X$ is an open subset of $\mathbb{A}^2$, we have ${\rm Pic}(X)=H^1_{\topet}(X,\gm)=0$. 
		Hence
		\[
		H^2_{\topet}(X,\mu_n)=\ker\big(H^2_{\topet}(X,\gm)\xrightarrow{n} H^2_{\topet}(X,\gm)\big).
		\]
		By \cite[Theorem 3.12]{Milne},
		\[ H^2_{\topet}(X,\mu_n)\simeq H^2_{\rm sing}(X(\CC),\ZZ/n\ZZ)\simeq H^2_{\rm sing}(S^1\times S^1,\ZZ/n\ZZ)\simeq \ZZ/n\ZZ.\]
		Thus $H^2_{\topet}(X,\gm)\neq 0$. On the other hand, $X$ is Stein; by using the exponential exact sequence
		$
		0\to\ul{\ZZ} \to \OO_{X}^{an} \to \OO_{X}^{an, \times}\to 1
		$, and using $H^3_{\rm sing}(S^1\times S^1,\ZZ/n\ZZ)=0$,
	it is easy to see that ${\rm Br'}^{\rm an}(X)=0$.
	\end{example}

\subsection{Azumaya representations and Brauer groups}\label{Sec3}
In this section, we shall review certain results on the Brauer groups of the homogeneous spaces from the Haboush's paper (cf. \cite{Haboush}).
	
Throughout this section, $H$ denotes a complex algebraic group.
\begin{definition}
An \emph{Azumaya representation} of $H$ of \emph{degree} $n$ is a morphism of algebraic groups $$\rho: H\to {\rm PGL}_{n}.$$
\end{definition}

An Azumaya representation $\rho$ is called \emph{trivial} if there exists a lift $\widetilde{\rho}:H\to {\rm GL}_{n}$ such that the following diagram commutes:
\[
\xymatrix{& &{\rm GL}_{n} \ar[d]^{\pi}\\
			H\ar@{-->}[urr]^{\widetilde{\rho}}\ar[rr]_{\rho}
			& & {\rm PGL}_{n}}
\]
where $\pi$ is the natural quotient map.

Two Azumaya representations $\rho: H\to {\rm PGL}_{n}$ and $\rho': H\to {\rm PGL}_{n'}$ are called \emph{equivalent} if there exist trivial representations
$\alpha: H\to {\rm PGL}_{r}$ and $\alpha': H\to {\rm PGL}_{r'}$ such that the representations $\rho\otimes \alpha$ and $\rho'\otimes \alpha'$ are isomorphic.
	
The set ${\rm RBr}(H)$ of equivalence classes of Azumaya representations forms a monoid under tensor product.
	
Recall that ${\rm M}_{n}$ denotes the algebra of $n\times n$ matrices. By the Skolem--Noether theorem, ${\rm Aut}({\rm M}_{n})\simeq {\rm PGL}_n$, with ${\rm PGL}_n$ acting on ${\rm M}_{n}$ by the conjugation action. 
Thus, if $\rho:H \to {\rm PGL}_n$ is an Azumaya representation, we may regard it as a map $\rho: H\to {\rm Aut}({\rm M}_n)$. Let ${\rm M}_{n}^{\rm op}$ be the opposite algebra. The transpose map $A\mapsto {}^{t}\! A$ induces an isomorphism $\tau: {\rm M}^{\rm op}_{n}\to {\rm M}_n$. Transporting the action via $\tau$ defines the \emph{opposite representation} $\rho^0$. 
For $h\in H$, $\rho(h)(A) = uAu^{-1}$ for all $A\in {\rm M}_n$, then
	\[
	\rho^{0}(h)(A) ={}^{t}\!{u^{-1}} A~^{t}\!u,
	\]
where $^{t}\!u$ denotes the transpose of $u$.
	It follows that $\rho\otimes \rho^{0}$ is trivial for every projective representation $\rho$. Hence, ${\rm RBr}(H)$ is in fact a group, called the \emph{representation theoretic Brauer group} of $H$ (cf.~\cite[Lemma~2.8]{Haboush}).
	
	\begin{remark}
		The Brauer group ${\rm Br}(H)$ and the representation theoretic Brauer group ${\rm RBr}(H)$ are not the same objects. Representation theoretic Brauer group of $H$ carries the information of algebraic central extension of $H$ by $\gm$. For instance, if $H={\rm SO}_n$, then ${\rm Br}(H)=0$ (cf. \cite[Corollary 4.3.]{Iversen}), but ${\rm RBr}(H)=\ZZ/2\ZZ$.
	\end{remark}
	
	\subsection{Algebraic central extensions}
	In this subsection, we recall certain basic definitions.
	
	Let $A$ be a diagonalizable complex algebraic group.
	\begin{definition}
		An \emph{algebraic central extension} of $H$ by $A$ is a short exact sequence of algebraic groups
		\[
		E \, : 1\to A \xrightarrow{j} E \xrightarrow{p}  H \to 1
	    \]
		such that $j(A)$ lies in the center of $E$.
	\end{definition}
	
	\begin{definition}
		An algebraic central extension of $H$ by $A$ 
		\[
		E\, : 1\to A \xrightarrow{j} E \xrightarrow{p}  H \to 1
		\]
		is called \emph{split} if there exists a homomorphism $\sigma: H\to E$ with $p\circ \sigma ={\rm id}_{H}$.
	\end{definition}
	
	If $E_1$ and $E_2$ are two extensions of $H$ by $A$, then by a homomorphism $f: E_1\to E_2$ means the following commutative diagram:
	$$
	\xymatrix{
		1\ar[r] & A  \ar[d]_-{{\rm id }} \ar[r]& E_1\ar[r]\ar[d]^{f} & H\ar[r]\ar[d]^{\rm id}& 1\\
		1\ar[r]&A\ar[r]& E_2 \ar[r] &H\ar[r]& 1}
	$$
	
	Two extensions are equivalent if such an $f$ exists. By the short five lemma $f$ is necessarily an isomorphism.
	\medskip
	
	The Baer sum of two extensions
	$$
E_1: \, \, 1\to A \xrightarrow{f_1} E \xrightarrow{g_1}  H \to 1 ,\qquad
E_2: \, \, 1\to A \xrightarrow{f_2} E \xrightarrow{g_2}  H \to 1,
	$$
	is constructed as follows. Form the fiber product
	$$
	E_{1}\times_{H} E_{2}=\{(e_1,e_2)\in E_1\times E_2\mid g_1(e_1)=g_2(e_2)\},
	$$
	and quotient by the subgroup $\{(f_1(x),{f_{2}(x)}^{-1})\mid x \in A \}$. The resulting group \[ E_{1}\times_{H} E_{2} \Big\slash \{(f_1(x),{f_{2}(x)}^{-1})\mid x \in A \}\] is again a central extension of $H$ by $A$, denoted $E_1+E_2$. Thus, the sum $E_1 + E_2$ defines the extension
	\[
	1 \;\longrightarrow\; A \xlongrightarrow{f_1} E_1 + E_2 \xlongrightarrow{g_1} H \;\longrightarrow\; 1,
	\]
	where the inclusion map is
	\[
	x \;\longmapsto\; \bigl[(f_1(x), \mathrm{id})\bigr] \;=\; \bigl[(\mathrm{id}, f_2(x))\bigr],
	\]
	and the projection map is
	\[
	(e_1,e_2) \;\longmapsto\; g_{1}(e_1) \;=\; g_{2}(e_2).
	\]
	
	We give an equivalent description of $E_1 + E_2$ is defined as follows.  
	First, consider the product extension
	\[
	E_1 \times E_2 : \quad 1 \;\longrightarrow\; A \times A \;\longrightarrow\; E_1 \times E_2 \;\longrightarrow\; H \times H \;\longrightarrow\; 1,
	\]
	and pull it back along the diagonal map $\Delta_H : H \to H \times H$.  
	This yields the central extension
	\[
	\Delta_H^{*}(E_1 \times E_2) : \quad 1 \;\longrightarrow\; A \times A \;\longrightarrow\; E_1 \times_H E_2 \;\longrightarrow\; H \;\longrightarrow\; 1.
	\]
	Next, we form the push-out of this extension along the addition map $\nabla_A : A \times A \to A$.  
	The resulting central extension of $H$ by $A$ is precisely the Baer sum $E_1 + E_2$.
	Thus, the bottom sequence is a central extension of $H$ by $A$, namely the Baer sum $E_1 + E_2$:  
	\begin{equation*}\label{Baer sum}
		\begin{array}{c@{\hspace{2cm}}c}
			\xymatrix{
				1 \ar[r] 
				& A \times A \ar@{=}[d] \ar[r] 
				& E_1 \times E_2 \ar[r] 
				& H \times H \ar[r] 
				& 1 \\
				1 \ar[r] 
				& A \times A \ar[d]_{\nabla_A} \ar[r] 
				& E_1 \times_H E_2 \ar[r] \ar[d] \ar[u] 
				& H \ar[r] \ar@{=}[d] \ar[u]_{\Delta_H} 
				& 1 \\
				1 \ar[r] 
				& A \ar[r] 
				& E_1 + E_2 \ar[r] 
				& H \ar[r] 
				& 1
			}
		\end{array}
	\end{equation*}
	The set of equivalence classes of central extensions of $H$ by $A$ is denoted by ${\rm E}_{\rm al}(H, A)$. This set forms an abelian group under the Baer sum: the sum of two equivalence classes of extensions is defined to be the equivalence class of their Baer sum.  
	The zero element in ${\rm E}_{\rm al}(H, A)$ is the class of a split central extension.

	\subsection{The Haboush isomorphism}
   Assume that $H$ is a closed subgroup of a connected complex algebraic group $G$.
    
	In \cite{Haboush}, Haboush showed that there is a natural isomorphism
	\begin{equation}\label{haboush}
		\varepsilon :{\rm RBr}(H)\xrightarrow{\;\simeq\;} {\rm E}_{\rm al}(H, \gm),
	\end{equation}
	 which send the class of $\rho: H\to {\rm PGL}_n$ to the class $E_{\rho}$ of the pull back of the central extension \[1\to \gm \to {\rm GL}_n\to {\rm PGL}_n\to 1\] via $\rho$ (cf.~\cite[Proposition~2.16]{Haboush}).
	
	 Consider the principal $H$-bundle
	$$
	q: G\to G/H.
	$$
	Haboush defined an induction morphism
	\begin{equation}\label{induction}
		i_{G/H}: {\rm RBr}(H)\to {\rm Br}(G/H),
	\end{equation}
	sending a class $\xi\in {\rm RBr}(H)$, represented by $\rho:H\to {\rm PGL}_n$, to the class $q_{*}(\rho)$ of $\PP^{n-1}$-fibration over $G/H$, associated to $\rho$, and $q$ (cf.~\cite[Proposition~3.5]{Haboush}).  
	
	Haboush \cite[Theorem 5.1]{Haboush} proved that when $G$ is semisimple, simply connected, and $H\subset G$ is a closed connected subgroup, the map ${i}_{G/H}$ in \eqref{induction} is injective.
	Therefore combining \eqref{haboush} and \eqref{induction} we obtain, for $G$ semisimple simply connected and $H\subset G$ closed and connected, a canonical injective map
\begin{equation}\label{Haboush: injective}
{\rm E}_{\rm al}(H, \gm)\hookrightarrow {\rm Br}(G/H).	
\end{equation}

	In this article, we prove that \eqref{Haboush: injective} is surjective as well.
	
\begin{theorem}\label{brauergp}
	Let $G$ be a connected, simply connected, complex semisimple algebraic group and
	$H\subset G$ be a closed connected subgroup.
	Then the natural homomorphism
	$$
	{\rm E}_{\rm al}(H, \gm) \to {\rm Br}(G/H)
	$$
	is bijective.
\end{theorem}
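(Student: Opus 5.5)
Injectivity is Haboush's theorem \cite[Theorem 5.1]{Haboush}, so it remains to prove surjectivity. We do this by computing both sides and comparing orders, or better, by exhibiting a commutative diagram in which both groups are identified with the same group.

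The plan is to use the principal $H$-bundle $q:G\to G/H$ and the Leray (Hochschild--Serre type) spectral sequence for étale cohomology with $\gm$-coefficients. For $G$ connected, simply connected and semisimple, we have $\mathcal O(G)^\times=\CC^\times$, $\pic(G)=0$ and ${\rm Br}(G)=0$ \cite{Iversen}. Next we use the standard exact sequence for a torsor under a connected group (Sansuc-type):
\[
0\to X(H)\to \pic(G/H)\to \pic(G)=0,
\]
followed by
\[
\pic(H)\to {\rm Br}(G/H)\to \ker\big({\rm Br}(G)\to {\rm Br}(G\times H)\big)=0 .
\]
More precisely, I would invoke Sansuc's exact sequence
\[
\pic(G)\to\pic(H)\to{\rm Br}(G/H)\to{\rm Br}(G),
\]
valid for $G\to G/H$ an $H$-torsor with $G$ having trivial units modulo constants. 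Since ${\rm Br}(G)=0$, this shows that $\pic(H)\to{\rm Br}(G/H)$ is surjective.

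The next step is to relate $\pic(H)$ to ${\rm E}_{\rm al}(H,\gm)$. For a connected linear group $H$, every $\gm$-torsor over $H$ carries a unique compatible group structure making it a central extension, normalized at the identity. This gives the classical isomorphism ${\rm E}_{\rm al}(H,\gm)\simeq\pic(H)$ (Fossum--Iversen; primitivity of line bundles on connected groups via the theorem of the square, using $\mathcal O(H\times H)^\times=\mathcal O(H)^\times\mathcal O(H)^\times$ by Rosenlicht). Then check that the composite ${\rm E}_{\rm al}(H,\gm)\simeq\pic(H)\to{\rm Br}(G/H)$ agrees, up to sign, with Haboush's map ${\rm E}_{\rm al}(H,\gm)\to{\rm Br}(G/H)$. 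Given a central extension $E$ realized through $\rho:H\to{\rm PGL}_n$, the Sansuc connecting map sends the $\gm$-torsor $E\to H$ to the obstruction class of lifting the $H$-torsor $G$ to an $E$-torsor. This obstruction is exactly the image under $H^1(G/H,H)\to H^1(G/H,{\rm PGL}_n)\to H^2(G/H,\gm)$ of the class of $q$, since the extension $E$ is the pullback of $1\to\gm\to{\rm GL}_n\to{\rm PGL}_n\to1$ along $\rho$. Thus the two maps coincide, and surjectivity follows. Combined with Gabber's theorem (${\rm Br}={\rm Br}'$ for quasi-projective $G/H$), this gives bijectivity.

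The main obstacle is this compatibility of connecting maps: identifying Sansuc's boundary $\pic(H)\to{\rm Br}(G/H)$ with the nonabelian coboundary of $q$ under $E\to H$. I would prove it via Čech cocycles on an étale cover trivializing $q$. Local sections $s_i$ give transition functions $h_{ij}\in H$. Lifting them to $e_{ij}\in E$ yields the $\gm$-valued 2-cocycle $e_{ij}e_{jk}e_{ik}^{-1}$, and this is precisely the class attached to the line bundle $E$ on $H$ via the torsor $G$. If citing Sansuc is unwanted, an alternative is a direct dimension count: Kummer theory and $H^i(G/H,\mu_n)$ computed from the fibration $H\to G\to G/H$ with $G$ 2-connected give ${\rm Br}(G/H)[n]\simeq{\rm Hom}(\pi_1(H),\ZZ/n)$ modulo the image of $\pic$. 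That quantity can be matched against ${\rm E}_{\rm al}(H,\gm)$, which is finite. Injectivity plus equal finite orders would then finish the proof.
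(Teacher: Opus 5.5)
Your argument is correct, but it reaches surjectivity by a different, purely algebraic route.

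\textbf{The paper's route.} The paper computes the target topologically:
\begin{enumerate}
\item it shows that $c_1\colon\pic^{\rm an}(G/H)\to H^2(G/H,\ZZ)\simeq\Hom(\pi_1(H),\ZZ)$ is onto, using the characteristic map $\Psi_H$ and Proposition~\ref{commute-1};
\item from the exponential sequence it gets ${\rm Br'}^{\rm an}(G/H)\simeq{\rm Tors}\,H^3(G/H,\ZZ)\simeq{\rm Ext}^1(\pi_1(H),\ZZ)$;
\item it transfers this to ${\rm Br}'(G/H)$ by a Kummer comparison (Lemma~\ref{brauer-1}, Proposition~\ref{brauer-3}), and to ${\rm Br}(G/H)$ by Gabber;
\item it identifies ${\rm E}_{\rm al}(H,\gm)\simeq{\rm Ext}^1(\pi_1(H),\ZZ)$ through Kumar--Neeb (Lemma~\ref{cor:6.7});
\item it concludes from Haboush's injectivity and equality of finite orders.
\end{enumerate}

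\textbf{Your route.} You replace all of this by Sansuc's sequence, which with $\pic(G)={\rm Br}'(G)=0$ gives ${\rm Br}'(G/H)\simeq\pic(H)$ directly. You then use the classical isomorphism ${\rm E}_{\rm al}(H,\gm)\simeq\pic(H)$ in place of Kumar--Neeb.

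\textbf{The compatibility step is unnecessary.} The compatibility of Sansuc's boundary with Haboush's map, which you single out as the main obstacle, is not needed. Since $\pic(H)\simeq\pic(H^{(1)}_{\rm red})$ is finite, consider the injections ${\rm E}_{\rm al}(H,\gm)\hookrightarrow{\rm Br}(G/H)\subseteq{\rm Br}'(G/H)$. Together with $|{\rm Br}'(G/H)|\le|\pic(H)|=|{\rm E}_{\rm al}(H,\gm)|$, they are forced to be bijections. This even makes Gabber's theorem superfluous. Keep in mind that Sansuc's ${\rm Br}$ is $H^2_{\mathrm{\acute{e}t}}(-,\gm)$, so the comparison must be made inside ${\rm Br}'$.

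\textbf{The vanishing of ${\rm Br}'(G)$.} This needs no citation: $G$ is $2$-connected, so $H^2_{\mathrm{\acute{e}t}}(G,\mu_n)=0$, and the Kummer sequence kills ${\rm Br}'(G)[n]$.

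\textbf{What each route buys.} Yours gives the cleaner statement ${\rm Br}(G/H)\simeq\pic(H)$ without analytic geometry, the machinery of \S\ref{Sec5}, or Kumar--Neeb. If you do carry out the \v{C}ech check, it also gives an intrinsic description of Haboush's map as a Sansuc boundary. The paper's route instead yields the topological description ${\rm Br}(G/H)\simeq{\rm Tors}\,H^3(G/H,\ZZ)$ and the algebraic-versus-analytic comparisons (Theorems~\ref{brauer-4} and~\ref{cor: baruer}) along the way.

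Your fallback ``dimension count'' is essentially the paper's argument with \'etale cohomology in place of analytic cohomology. It requires exactly Proposition~\ref{commute-1} to identify the image of $\pic(G/H)$ in $\Hom(\pi_1(H),\ZZ/n)$.
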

We prove Theorem \ref{brauergp} in Section \S \ref{Sec7}.

\subsection{The group ${\rm E}_{\rm al}(H,\gm)$ and $X(\pi_1(H))$}

	Let $H$ be semisimple. Let $\pi_{1}(H)$ be the fundamental group of $H$. Note that since $H$ is semisimple, the topological and algebraic fundamental group of $H$ are same. 
	Denote by $X(\pi_{1}(H))={\rm Hom}_{\rm gp}(\pi_{1}(H), \gm)$, the group of characters of $\pi_{1}(H)$. There is a natural map
	\begin{equation}\label{charcter center}
	X(\pi_{1}(H))\to {\rm E}_{\rm al}(H, \gm)
	\end{equation}
	sending $\chi$ to the class of the pushout $\chi_{*}$ of the universal covering sequence
	$$
	1\to \pi_{1}(H)\to H_{sc}\to H\to 1
	$$
	via $\chi$. This map \eqref{charcter center} is an isomorphism (cf.~\cite[Proposition~3.8]{Iversen76}). 
	Note that when $H$ is not semisimple, there is no such isomorphism as in \eqref{charcter center}. For example, for $H=\gm$, the group ${\rm E}_{\rm al}(H, \gm)$ is trivial, but the group $X(\pi_{1}(H))$ is not.
	
	When $G$ is a semisimple, simply connected, algebraic group over $\CC$ and $H$ is a closed connected semisimple subgroup of $G$, Haboush showed (using \eqref{haboush}, \eqref{induction}, and \eqref{charcter center}) that the natural homomorphism
	\begin{equation}\label{character-brauer}
		X(\pi_{1}(H)) 
		\;\xrightarrow{\;\simeq\;}\; {\rm E}_{\rm al}(H,\mathbb{G}_m) 
		\;\xrightarrow{\;\simeq\;}\; {\rm RBr}(H) 
		\;\hookrightarrow\; {\rm Br}(G/H)
	\end{equation}
	is injective.

    \subsection{Projective representations and geometric constructions}\label{projective}
     Let $H$ be a semisimple. Let $\pi: H_{sc}\to H$ be the universal cover of $H$, with kernel $\pi_1(H)$, a finite central subgroup. Let $T_{sc}$ be a maximal torus of $H_{sc}$, and set $T=T_{sc}/\pi_1(H)$, the corresponding maximal torus of $H$.
    
    Let $Z\subset H_{sc}$ be a central finite subgroup, and let $H'=H_{sc}/Z$. If $\rho: H_{sc}\to {\rm GL}(V)$ is a linear representation of $H_{sc}$ on which $Z$ acts by a character $\chi$, then $\rho$ gives rise to a projective representation $\rho(V):H'\to {\rm PGL}(V)$. 
    
    \begin{lemma} \label{equiv-8} If $V_1,$ and $V_2$ are two linear representations of $H_{sc}$ such that $Z$ acts on both $V_1$ and $V_2$ by the same character $\chi$, then $\rho(V_1), \rho(V_2)$ are equivalent in ${\rm RBr}(H')$. \end{lemma}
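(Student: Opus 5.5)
Two natural routes are available. The first goes through the Haboush isomorphism $\varepsilon$ of \eqref{haboush}. The second directly exhibits trivial representations $\alpha,\alpha'$ with $\rho(V_1)\otimes\alpha\simeq\rho(V_2)\otimes\alpha'$, as required by the definition of equivalence in ${\rm RBr}(H')$. I would do the direct construction and use the first route as a check.

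For the direct argument, take $\alpha=\rho(V_2^{*})\otimes$ (something) and consider $V_1\otimes V_2^{*}$ as a representation of $H_{sc}$. Since $Z$ acts on $V_1$ by $\chi$ and on $V_2^{*}$ by $\chi^{-1}$, it acts trivially on $V_1\otimes V_2^{*}$. Hence this representation factors through $H'=H_{sc}/Z$. The corresponding projective representation $\rho(V_1\otimes V_2^*) = \rho(V_1)\otimes\rho(V_2^{*})$ therefore lifts to ${\rm GL}(V_1\otimes V_2^*)$, so it is trivial. Similarly, $\rho(V_2\otimes V_2^{*})$ is trivial.

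Now set $\alpha=\rho(V_2\otimes V_2^{*})$ and $\alpha'=\rho(V_1\otimes V_2^{*})$, both trivial Azumaya representations of $H'$. Then
\[
\rho(V_1)\otimes\alpha=\rho(V_1\otimes V_2\otimes V_2^{*})\simeq \rho(V_2\otimes V_1\otimes V_2^{*})=\rho(V_2)\otimes\alpha'.
\]
The isomorphism comes from the swap $V_1\otimes V_2\cong V_2\otimes V_1$. This is an $H_{sc}$-equivariant linear isomorphism, so it induces an isomorphism of the two projective representations of $H'$ by conjugation. This is exactly the definition of equivalence.

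The only points needing care are minor. First, one must check that the tensor product of projective representations induced by linear representations on which $Z$ acts by characters is again of this form; this holds because the characters multiply. Second, one must check that the ${\rm PGL}$-tensor map is compatible with the linear tensor product, which is immediate from the definition $(\alpha,\alpha')\mapsto\alpha\otimes\alpha'$.

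As a cross-check, $\varepsilon(\rho(V))$ is the pullback of $1\to\gm\to{\rm GL}(V)\to{\rm PGL}(V)\to1$. This is identified with the pushout of $1\to Z\to H_{sc}\to H'\to1$ along $\chi$, since $H_{sc}\times\gm/Z$ (antidiagonal via $\chi$) maps isomorphically onto that pullback. The pullback therefore depends only on $\chi$, and injectivity of $\varepsilon$ gives the claim. I expect no real obstacle. The mild subtlety is verifying that this map from $(H_{sc}\times\gm)/Z$ is an isomorphism onto the fiber product, and the direct construction above avoids it.
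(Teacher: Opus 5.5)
Your argument is correct. It uses the same reduction as the paper: find linear representations $W_1,W_2$ of $H_{sc}$ on which $Z$ acts trivially, with $V_1\otimes W_1\simeq V_2\otimes W_2$. The difference is in how you choose the auxiliary representations.

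The paper takes $W_1=V_1^{\otimes n_2}\otimes V_2^{\otimes n_1}$ and $W_2=V_1^{\otimes (n_2+1)}\otimes V_2^{\otimes (n_1-1)}$, where $n_1+n_2$ is divisible by the order $n$ of $\chi$. So it relies on $Z$ being finite (hence $\chi^n=1$), and it never leaves the tensor algebra generated by $V_1$ and $V_2$.

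You instead take $W_1=V_2\otimes V_2^{*}$ and $W_2=V_1\otimes V_2^{*}$, killing the character with the contragredient. This gives much smaller auxiliary representations. It also works for any central subgroup $Z$ acting on $V_1,V_2$ by a common character, whether or not $Z$ is finite.

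Conceptually, your proof is the paper's earlier observation that $\rho\otimes\rho^{0}$ is trivial, applied in ${\rm RBr}(H')$. Note that $\rho(V_2^{*})$ is exactly the opposite representation $\rho(V_2)^{0}$, and $\rho(V_1)\otimes\rho(V_2)^{0}$ is itself trivial. Hence $[\rho(V_1)]=[\rho(V_1)\otimes\rho(V_2)^{0}]+[\rho(V_2)]=[\rho(V_2)]$.

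Your cross-check via $\varepsilon$ is also valid. It is essentially the content of Lemma~\ref{equiv-10} combined with injectivity of the map in \eqref{haboush}, but it depends on that heavier input, whereas your direct construction does not.
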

    
    \begin{proof}
     In view of the above, it is enough to show that there are linear representations $W_1,W_2$ of $H_{sc}$ such that $Z$ acts trivially on both $W_1,W_2$ (i.e., $\rho(W_i)$ lifts to ${\rm GL}(W_i)$ as representations of $H'$) such that $V_1\otimes W_1\simeq V_2\otimes W_2$ as representations of $H_{sc}$. Since $Z$ is finite, there is $n$ such that $\chi^n$ is trivial. Choose $n_1,n_2$ two positive integers such that $n$ divides $n_1+n_2$. Consider $W_1=V_1^{\otimes n_2}\otimes V_2^{\otimes n_1}$ and $W_2=V_1^{\otimes n_2+1}\otimes V_2^{\otimes n_1-1}$. Then on both $W_1,W_2$, the subgroup $Z$ acts trivially. Moreover, as representations of $H_{sc}$, $V_1\otimes W_1\simeq V_2\otimes W_2$. So, $\rho(V_1\otimes W_1)$ is isomorphic to $\rho(V_2\otimes W_2)$ as representations of $H'$. Therefore, their classes in ${\rm RBr}(H')$ are equal. 
    \end{proof}

    \begin{lemma}\label{lemma : representation}
    	The short exact sequence $1\to \pi_{1}(H)\to T_{sc}\to T\to 1$ induces a short exact sequence of abelian groups
    	$$
    	1\to  X(T) \to X(T_{sc})\to X(\pi_{1}(H))\to 1.
    	$$
    \end{lemma}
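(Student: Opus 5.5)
We apply the contravariant functor $X(-)={\rm Hom}(-,\gm)$ to the short exact sequence of diagonalizable groups
$$1\to \pi_1(H)\to T_{sc}\to T\to 1,$$
where $\pi_1(H)$ is a finite central subgroup of $T_{sc}$ and $T=T_{sc}/\pi_1(H)$. The plan is to verify exactness at each of the three places: left exactness is formal, and surjectivity reduces to extending characters from a finite subgroup of a torus.

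\emph{Injectivity of $X(T)\to X(T_{sc})$.} This map is precomposition with the quotient $T_{sc}\to T$. Since $T_{sc}\to T$ is surjective, a character of $T$ that becomes trivial on $T_{sc}$ is already trivial.

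\emph{Exactness in the middle.} The composite $X(T)\to X(T_{sc})\to X(\pi_1(H))$ is trivial, because $\pi_1(H)$ is the kernel of $T_{sc}\to T$. Conversely, let $\lambda\in X(T_{sc})$ be trivial on $\pi_1(H)$. Then $\lambda$ factors through the quotient algebraic group $T_{sc}/\pi_1(H)=T$. By the universal property of quotients of algebraic groups by closed normal subgroups, the induced map $T\to\gm$ is a morphism of algebraic groups, so $\lambda$ lies in the image of $X(T)$.

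\emph{Surjectivity of $X(T_{sc})\to X(\pi_1(H))$.} This is the main point. Write $T_{sc}\simeq\gm^r$, so $X(T_{sc})\simeq\ZZ^r$, and let $F=\pi_1(H)\subset\gm^r$, a finite subgroup. I will use the anti-equivalence between diagonalizable groups and finitely generated abelian groups, $D\mapsto X(D)$. Under this anti-equivalence, closed subgroups of $D$ correspond to quotients of $X(D)$, and the correspondence sends $F$ to $X(D)/F^{\perp}$. Hence restriction $X(T_{sc})\to X(F)$ is surjective.

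For a more concrete argument, one can check directly that the characters of $\gm^r$ separate the points of $F$. This holds because the coordinate characters $t_i$ already separate all points of $\gm^r$. Now let $\Lambda$ be the image of $\ZZ^r$ in $\widehat F={\rm Hom}(F,\CC^\times)$. If $\Lambda$ were a proper subgroup, then by duality for finite abelian groups there would be some $f\neq 1$ in $F$ killed by all of $\Lambda$. That would contradict the fact that characters separate points, so $\Lambda=\widehat F$.

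Both routes are short. The only care needed is to identify $X(\pi_1(H))$, defined as homomorphisms from the abstract finite group to $\gm$, with the algebraic characters of the finite diagonalizable group scheme $\pi_1(H)$ over $\CC$. These agree because $\pi_1(H)$ is a finite constant group.
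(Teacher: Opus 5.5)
Your proof is correct, but it follows a different route from the paper. The paper does not check the three exactness conditions separately. It cites Iversen's exact sequence for the $\pi_1(H)$-torsor $T_{sc}\to T$,
\[
1\to X(T)\to X(T_{sc})\to X(\pi_1(H))\to \mathrm{Pic}(T),
\]
where the last map sends $\chi$ to the line bundle on $T$ associated to $T_{sc}\to T$ via $\chi$. It then deduces surjectivity from $\mathrm{Pic}(T)=0$, since the coordinate ring of a torus is a Laurent polynomial ring and hence a UFD. So in the paper, left and middle exactness come with the citation. Surjectivity becomes a geometric statement: the obstruction to extending $\chi$ from $\pi_1(H)$ to $T_{sc}$ is a line bundle on $T$, which is necessarily trivial.

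You instead prove the purely group-theoretic fact that characters of a diagonalizable group restrict surjectively onto any closed subgroup. Your concrete argument is complete: if $\Lambda\subsetneq\widehat F$, a nontrivial character of $\widehat F/\Lambda$ yields $f\neq 1$ in $F$ on which every $t_i|_F$ is trivial. This contradicts the fact that the coordinate characters separate points.

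Your version is self-contained and more general, since it uses neither that $F$ is the kernel of a map of tori nor any Picard group. The paper's version is shorter given the reference and matches its recurring use of the character-to-line-bundle map. Note that your first, ``anti-equivalence'' route essentially restates the fact to be proved, so it is the second argument that actually carries the proof.
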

    \begin{proof}
    	By \cite{Iversen}, the exact sequence induces
    	$$
    	1\to X(T)\to X(T_{sc})\to X(\pi_1(H))\to {\rm Pic}(T)\to \cdots.
    	$$
    	Since $T$ is smooth and its affine coordinate ring is a UFD, ${\rm Pic}(T)=0$.
    	The proof of the lemma follows.
    \end{proof}
    
    Let $\lambda$ be an integral dominant weight of $H_{sc}$ and let $V_\lambda$ be the corresponding irreducible representation. By Schur's lemma, $\pi_1(H)$ acts on $V_\lambda$ via a character.
    
     \begin{lemma}\label{equiv-9} If $\lambda_1, \lambda_2$ are dominant weights of $T_{sc}$, then $\rho(V_{\lambda_1})\otimes \rho(V_{\lambda_2})$ and $\rho(V_{\lambda_1+\lambda_2})$ are equivalent in ${\rm RBr}(H)$.
     \end{lemma}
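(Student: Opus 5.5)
The plan is to reduce the statement to \cref{equiv-8}, applied with $Z=\pi_1(H)$ and $H'=H_{sc}/\pi_1(H)=H$. First I will check that the tensor product of projective representations is compatible with the construction $\rho(\cdot)$. Concretely, if $V_1,V_2$ are linear representations of $H_{sc}$ on which $\pi_1(H)$ acts by characters $\chi_1,\chi_2$, then $\pi_1(H)$ acts on $V_1\otimes V_2$ by the scalar character $\chi_1\chi_2$. The projective representation $\rho(V_1\otimes V_2):H\to{\rm PGL}(V_1\otimes V_2)$ is then the composite of $(\rho(V_1),\rho(V_2)):H\to{\rm PGL}(V_1)\times{\rm PGL}(V_2)$ with the tensor map ${\rm PGL}(V_1)\times{\rm PGL}(V_2)\to{\rm PGL}(V_1\otimes V_2)$. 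This is immediate, since both are induced from the same linear map $H_{sc}\to{\rm GL}(V_1\otimes V_2)$ and agree after composing with $H_{sc}\to H$, which is surjective. Hence $\rho(V_{\lambda_1})\otimes\rho(V_{\lambda_2})=\rho(V_{\lambda_1}\otimes V_{\lambda_2})$ as Azumaya representations of $H$.

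Next I will compare the characters by which $\pi_1(H)$ acts on $V_{\lambda_1}\otimes V_{\lambda_2}$ and on $V_{\lambda_1+\lambda_2}$. Since $\pi_1(H)$ is central, it lies in $T_{sc}$. On $V_\lambda$ it acts by a scalar, and that scalar can be read off on the highest weight vector, so the character is the restriction $\lambda|_{\pi_1(H)}$; this is the image of $\lambda$ under $X(T_{sc})\to X(\pi_1(H))$ in \cref{lemma : representation}. Therefore $\pi_1(H)$ acts on $V_{\lambda_1}\otimes V_{\lambda_2}$ by $\lambda_1|_{\pi_1(H)}\cdot\lambda_2|_{\pi_1(H)}=(\lambda_1+\lambda_2)|_{\pi_1(H)}$, which is exactly the character by which it acts on $V_{\lambda_1+\lambda_2}$. (Alternatively, $V_{\lambda_1+\lambda_2}$ is the Cartan component of $V_{\lambda_1}\otimes V_{\lambda_2}$, and Schur's lemma gives the same conclusion.)

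Finally, \cref{equiv-8} applies to $V_1=V_{\lambda_1}\otimes V_{\lambda_2}$ and $V_2=V_{\lambda_1+\lambda_2}$ with $Z=\pi_1(H)$. It shows that $\rho(V_{\lambda_1}\otimes V_{\lambda_2})$ and $\rho(V_{\lambda_1+\lambda_2})$ are equivalent in ${\rm RBr}(H)$. Combined with the first step, this is the claim.

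The main obstacle is the first step: the equality $\rho(V_1)\otimes\rho(V_2)=\rho(V_1\otimes V_2)$ has to be stated at the level of maps to ${\rm PGL}$, not merely up to equivalence. This requires the tensor operation in ${\rm RBr}(H)$ to be exactly the composite with ${\rm PGL}_{n_1}\times{\rm PGL}_{n_2}\to{\rm PGL}_{n_1n_2}$, as in the definitions of \S\ref{Sec3}. Once that is fixed, the remaining steps are routine.
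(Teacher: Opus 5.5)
Your proposal is correct and follows the same route as the paper: you check that $\pi_1(H)$ acts on both $V_{\lambda_1}\otimes V_{\lambda_2}$ and $V_{\lambda_1+\lambda_2}$ through the restriction of $\lambda_1+\lambda_2$, and then invoke Lemma~\ref{equiv-8} with $Z=\pi_1(H)$. The paper uses the identification $\rho(V_{\lambda_1})\otimes\rho(V_{\lambda_2})=\rho(V_{\lambda_1}\otimes V_{\lambda_2})$ tacitly, both here and in the proof of Lemma~\ref{equiv-8}, so verifying it explicitly fills in a detail rather than changing the approach.
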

    
    \begin{proof}
    Since $V_{\lambda_i}$, $i=1,2$ are highest weight representations of $H_{sc}$, $\pi_1(H)$ acts on $V_{\lambda_1}\otimes V_{\lambda_2}$ by the restriction of $\lambda_1+\lambda_2$ to $\pi_{1}(H)$. Also, $\pi_1(H)$ acts on $V_{\lambda_1+\lambda_2}$ by $\lambda_1+\lambda_2$ . The rest of the proof of the lemma follows from Lemma \ref{equiv-8}.
    \end{proof}
    
    Let $X(T_{sc})_{\ge0}$ be the monoid consisting of the dominant characters of $(H_{sc}, T_{sc})$. Then by \cref{equiv-9}, the association \[X(T_{sc})_{\ge 0}\rightarrow {\rm RBr}(H):\lambda\mapsto [\rho(V_{\lambda})]\] defines a homomorphism of monoids. 
    Since $X(T_{sc})$ is the Grothendieck completion of $X(T_{sc})_{\ge 0}$, this extends to a homomorphism of groups \[X(T_{sc})\to {\rm RBr}(H).\] Further, if $\lambda$ is a character of $T$, then its restriction to $\pi_1(H)$ is trivial, hence $\rho(V_\lambda)$ lifts to ${\rm GL}(V_\lambda)$. This means, by \cref{lemma : representation} the homomorphism \[X(T_{sc})\to {\rm RBr}(H)\] factors through
    \begin{equation}\label{c-prime} 
    c':X(\pi_1(H))\to {\rm RBr}(H).
    \end{equation}
    
    \medskip
    
    For a character $\chi\in X(\pi_1(H)$, consider the push-forward of the central extension \[1\to \pi_1(H)\to H_{sc}\to H\to 1\] by $\chi:\pi_1(H)\to\gm$, which gives a central extension of $H$ by $\gm$. By the work of Haboush, this corresponds to a unique element $c(\chi)\in {\rm RBr}(H)$. This gives rise to a map
  
    \begin{equation}\label{c} 
    	c:X(\pi_1(H))\to {\rm RBr}(H).
    \end{equation}

    \begin{lemma}\label{equiv-10} If $\rho:H_{sc}\to {\rm GL}(V)$ is a linear representation whose restriction to $\pi_1(H)$ acts by the character $\chi$, then all squares in the following diagram commute, where $K=(\gm\times H_{sc})/\pi_1(H)$ and $\rho_1([t,g])=\chi(t)\rho(g)$.
    	
    \centerline{\xymatrix{
    			1 \ar[r] & \pi_1(H)\ar[d]_\chi \ar[r] & H_{sc}\ar[d]\ar[r] & H\ar@{=}[d]\ar[r]& 1\\
    			1\ar[r] & \gm\ar@{=}[d]\ar[r] & K\ar[d]^{\rho_1}\ar[r] & H\ar[d]^{\rho(V)}\ar[r] & 1\\
    			1\ar[r] &\gm\ar[r] & {\rm GL}(V)\ar[r] & {\rm PGL}(V)\ar[r] &1	}}
    \end{lemma}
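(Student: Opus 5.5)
We have to check three things: that every arrow in the diagram is a well-defined homomorphism, that both rows are exact and central, and that each of the four squares commutes. This is an element-chasing verification.

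\textbf{The middle row.} Here $K=(\gm\times H_{sc})/\pi_1(H)$, where $\pi_1(H)$ sits inside $\gm\times H_{sc}$ antidiagonally as $\{(\chi(z)^{-1},z)\}$. This is exactly the pushout of the top row along $\chi$. The map $\gm\to K$ is $t\mapsto[t,1]$ and the map $K\to H$ is $[t,g]\mapsto\pi(g)$. The top-middle vertical arrow is $g\mapsto[1,g]$.

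\emph{Centrality and exactness.} Since $\pi_1(H)$ is central in $H_{sc}$, the image of $\gm$ is central in $K$. The kernel of $K\to H$ consists of classes $[t,z]$ with $z\in\pi_1(H)$. Because $[t,z]=[t\chi(z),1]$, this kernel is exactly the image of $\gm$. Injectivity of $\gm\to K$ holds because $(t,1)$ lies in the antidiagonal subgroup only when $t=1$.

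\emph{The two top squares.} The left square commutes because $[1,z]=[\chi(z),1]$ in $K$. The right square commutes trivially, since $[1,g]\mapsto\pi(g)$.

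\textbf{The map $\rho_1$.} Set $\rho_1([t,g])=\chi(t)\rho(g)$. Here the notation must be read with care: the scalar factor should be $t\cdot\rho(g)$, i.e. $t$ acting as the scalar $t\,\mathrm{id}_V$, since $t\in\gm$. The key check is that $\rho_1$ kills the antidiagonal:
\[
\rho_1(\chi(z)^{-1},z)=\chi(z)^{-1}\rho(z)=\chi(z)^{-1}\chi(z)\,\mathrm{id}_V=\mathrm{id}_V,
\]
using the hypothesis that $\pi_1(H)$ acts on $V$ by $\chi$. The map $(t,g)\mapsto t\rho(g)$ is a homomorphism on $\gm\times H_{sc}$ because scalars are central. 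Hence $\rho_1$ descends to a homomorphism on the quotient $K$.

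\textbf{The two bottom squares.}
\begin{itemize}
\item The left square commutes because $\rho_1([t,1])=t\,\mathrm{id}_V$, which is the standard inclusion $\gm\to{\rm GL}(V)$.
\item For the right square, recall that $\rho(V)$ is induced from $\rho$ via $H=H_{sc}/\pi_1(H)$, so $\rho(V)(\pi(g))$ is the class of $\rho(g)$ in ${\rm PGL}(V)$. The image of $\rho_1([t,g])=t\rho(g)$ in ${\rm PGL}(V)$ is also the class of $\rho(g)$.
\end{itemize}

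The only genuine obstacle is the well-definedness of $\rho_1$ on $K$, which rests on the hypothesis about the action of $\pi_1(H)$; everything else is formal. As a consequence, the middle row is the pullback of the bottom row via $\rho(V)$, so $c(\chi)=[\rho(V)]$, which presumably shows $c=c'$.
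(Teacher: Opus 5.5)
Your verification is correct and is the direct element-by-element check the paper has in mind; the paper itself only says the proof ``follows immediately.'' You were also right to read $\rho_1([t,g])=\chi(t)\rho(g)$ as $t\,\rho(g)$, since that is the only reading compatible with the identity arrow on $\gm$ and with $\rho_1$ being well defined on $K$.
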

    \begin{proof}
    The proof follows immediately.
    \end{proof}
 
    \begin{prop} 
    	The maps $c'$ and $c$ defined above coincide.
    \end{prop}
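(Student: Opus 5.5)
Both $c$ and $c'$ are group homomorphisms $X(\pi_1(H))\to {\rm RBr}(H)$. For $c$ this holds because pushout along characters is additive for the Baer sum and the Haboush isomorphism \eqref{haboush} is a group isomorphism. For $c'$ it holds by construction. So it suffices to check that they agree on a generating set, and then, by \cref{lemma : representation}, to check the equality after composing with the surjection $X(T_{sc})\to X(\pi_1(H))$.

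Since $X(T_{sc})$ is generated as a group by dominant weights, it is enough to fix a dominant weight $\lambda$, put $\chi=\lambda|_{\pi_1(H)}$, and show that $c(\chi)=[\rho(V_\lambda)]$ in ${\rm RBr}(H)$. By definition $c'(\chi)=[\rho(V_\lambda)]$.

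The plan is to apply \cref{equiv-10} to $V=V_\lambda$. By Schur's lemma, $\pi_1(H)$ acts on $V_\lambda$ via the character $\chi$. The top pair of squares in the diagram of \cref{equiv-10} identifies the middle row $1\to\gm\to K\to H\to 1$ with the pushout $\chi_*$ of the universal covering sequence. This holds because $K=(\gm\times H_{sc})/\pi_1(H)$ is precisely the standard construction of the pushout, with $\pi_1(H)$ embedded antidiagonally via $z\mapsto(\chi(z)^{-1},z)$. The bottom pair of squares, with the identity on $\gm$, exhibits the same extension $K$ as the pullback of $1\to\gm\to{\rm GL}(V)\to{\rm PGL}(V)\to1$ along $\rho(V_\lambda)$. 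Here one uses the standard fact that a morphism of central extensions which is the identity on the kernel identifies the source with the pullback, by the short five lemma. Hence $\varepsilon([\rho(V_\lambda)])=[K]=\chi_*$. Since $\varepsilon$ is a bijection, this gives $[\rho(V_\lambda)]=c(\chi)$.

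The step needing the most care is that $\rho_1$ is well defined, i.e. trivial on the antidiagonal copy of $\pi_1(H)$. This follows from $\rho(z)=\chi(z)\,{\rm id}$ for $z\in\pi_1(H)$. A second point is keeping the sign conventions consistent, so that the extension obtained is $\chi_*$ and not $(\chi^{-1})_*$.

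The remaining point is the reduction to dominant weights. Both $c\circ r$ and $c'\circ r$, where $r\colon X(T_{sc})\to X(\pi_1(H))$ is restriction, are homomorphisms agreeing on the generating monoid $X(T_{sc})_{\ge0}$, so they are equal. Surjectivity of $r$ (\cref{lemma : representation}) then gives $c=c'$.
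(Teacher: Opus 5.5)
Your proof is correct and follows essentially the same route as the paper. You reduce to characters coming from dominant weights, then use \cref{equiv-10} to identify both the pushout $\chi_*$ and the pullback along $\rho(V_\lambda)$ with the middle row $1\to\gm\to K\to H\to 1$, and conclude via Haboush's bijection. Beyond the paper, you also justify the reduction step (both maps are homomorphisms, and dominant weights generate $X(T_{sc})$), which the paper only asserts.
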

    \begin{proof}
    In view of Haboush's work, it is enough to show that the central extensions of $H$ by $\gm$ corresponding to $c'(\chi)$ and $c(\chi)$ are the same for every $\chi\in X(\pi_1(H))$. In fact, it is enough to check this for those $\chi$ which come from dominant weights of $T_{sc}$. So, let $\chi$ be given by a dominant weight of $T_{sc}$. Note that the central extension of $H$ by $\gm$ corresponding to $c(\chi)\in {\rm RBr}(H)$ corresponds to the push forward of $1\to \pi_1(H)\to H_{sc}\to H\to 1$  by $\chi$. On the other hand, $c'(\chi)$ is constructed using a representation $\rho:H_{sc}\to {\rm GL}(V_{\chi})$ whose restriction to $\pi_1(H)$ is the character $\chi$, and $c'(\chi)$ is given by the corresponding projective representation $\rho(V_{\chi}):H\to {\rm PGL}(V)$.
    So, the central extension of $H$ by $\gm$ corresponding to $c'(\chi)$ is given by the pullback of $1\to \gm\to {\rm GL}(V_{\chi})\to {\rm PGL}(V_{\chi})\to 1$ by $\rho(V_{\chi}):H\to {\rm PGL}(V_{\chi})$. However, by \cref{equiv-10}, both these are the same as the second row in the diagram of \cref{equiv-10}.
    \end{proof}

   \subsection{Geometric interpretation of the map in  \eqref{character-brauer}}\label{geomteric}
   Let $G$ be a connected simply connected semisimple algebraic group and let $H$ be a closed connected subgroup. Suppose $H$ is also semisimple.
   Let $\lambda$ be an integral dominant weight of $H_{sc}$ and let $V_\lambda$ be the corresponding irreducible representation. By Schur's lemma, $\pi_1(H)$ acts on $V_\lambda$ via a character, so $H$ acts naturally on $\PP(V_\lambda)$. This give an Azumaya representation $\rho_\lambda: H\to {\rm PGL}(V_\lambda)$, which defines an $H$-action on $G\times {\rm End}(V_\lambda)$ by
   $$
   (g',A)\cdot h=(g'h^{-1},h\cdot A).
   $$
   The quotient $G\times^{H}{\rm End}(V_\lambda)$ is an Azumaya algebra on $G/H$, denoted by $q_{*}(\rho_\lambda)$, where $q:G\to G/H$.
    Consider
    $$
    P'_\lambda=G \times \PP(V_{\lambda}),
    $$
    where $H$ acts diagonally by
    $$
    h\cdot(g',p)=(g'h^{-1},h\cdot p).
    $$
    Then the quotient $P_\lambda=P'_\lambda/H$ is a $\PP(V_\lambda)$-fibration over $G/H$.

     By our earlier discussion both $P_{\lambda}$ and $q_{*}(\rho_\lambda)$ represent the same class in ${\rm Br}(G/H)$ (cf. \cref{projective}).
    
    Thus we obtain a natural homomorphism
    $$
    c: X(T_{sc})\to {\rm Br}(G/H), \qquad \lambda\mapsto P_{\lambda}.
    $$
    This map is trivial on $X(T)$; hence by Lemma~\ref{lemma : representation} it induces
    $$
    \tilde{c}: X(\pi_1(H))\to {\rm Br}(G/H),
    $$
    which is injective. This map is the composition of the maps given in \eqref{induction} and \eqref{c-prime} and it also the same map as in \eqref{character-brauer}.

    \begin{example} Consider $H= {\rm SO}_2$ and $G={\rm SL}_2$.  Note that $H$ is isomorphic to $\gm$ and ${\rm E}_{\rm al}(H,\gm)=0$. By Theorem~\ref{brauergp}, ${\rm Br}(X)=0$. Therefore, we conclude that any $\PP^n$-fibration over $X={\rm SL}_2/H$ is Zariski locally trivial. 
    \end{example}
    
    \begin{example}\label{example: brauer group non-zero}
    	For $n\ge 3$, consider $H={\rm SO}_n$ and $G={\rm SL}_n$. Consider the highest weight representation $V_{\lambda}$ of $H_{sc}={\rm Spin}_n(\CC)$ with highest weight $\lambda=\varpi$, where $\varpi$ denotes the fundament weight corresponding to the end node of the associated Dynkin diagram of $H_{sc}$. 
    	Note that $\pi_{1}(H)=\ZZ/2\ZZ$, and the restriction of $\lambda$ on $\pi_{1}(H)$ is not trivial. By Haboush's theorem we have
    	$$
    	X(\pi_1(H))\hookrightarrow {\rm Br}(X).
    	$$
    	Consider the action of $H$ on $\PP(V_\lambda)$. Then, the associated $\PP^{N-1}$-fibration
    	$$
    	P_{\lambda}:= G\times_{H} \PP(V_\lambda)\to G/H,
    	$$
    	with $N=\dim V_\lambda$, does not arise from a vector bundle and hence is not Zariski locally trivial.
    	In fact, by Theorem~\ref{brauergp}, it follows that $P_{\lambda}$ is a unique (up to Brauer equivalence) non trivial projective bundle  which does not arise from a vector bundle. 
    \end{example}

	\subsection{Picard groups of reductive groups}\label{Sec4}
	In this subsection, we prove that the Picard group of any connected reductive group is equal to the Picard group of its derived subgroup.
		
	Let $H$ be a connected reductive group. Let $H^{(1)}$ or $(H, H)$ denote the derived subgroup of $H$. Note that $H^{(1)}$ is a connected semisimple group. Let $Z_H$ denote the neutral component of the center of $H$. Then $H=H^{(1)}\cdot Z_H$ and $H^{(1)}\cap Z_{H}$ is finite (cf. \cite{Borel}). Moreover $$H/H^{(1)}\simeq Z_{H}\big/ (Z_{H}\cap H^{(1)}).$$ 
	If $T$ is a maximal torus of $H$, then $Z_H\subset T$. Consider the canonical projection $q: H\to H/H^{(1)}$. Since $H$ is connected reductive, $H/H^{(1)}$ is a torus. Thus we have a short exact sequence 
	\begin{equation}\label{question: split}
	1\to H^{(1)}\to H\xrightarrow{q} H/H^{(1)}\to 1 .
	\end{equation}
	\begin{qstn}
	A natural and fundamental question is whether short exact sequence \cref{question: split} splits, in other words whether
	there exists an algebraic group homomorphism $\sigma: H/H^{(1)}\to H$ with $q \circ \sigma=\mathrm{id}_{H/H^{(1)}}$. 
	\end{qstn} 
	
	We provide an affirmative answer to this question. To proceed we prove the following lemma.	
	\begin{lemma}\label{lemma: split}
		Let $T_{1},T_2,T_3$ be tori. If
		$$
		1\to T_1\xrightarrow{f} T_2\xrightarrow{g} T_3\to 1
		$$
		is a short exact sequence, then $f$ (and hence $g$) admits a splitting.
	\end{lemma}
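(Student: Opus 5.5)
The plan is to pass to character lattices and use the anti-equivalence between complex tori and finitely generated free abelian groups given by $T\mapsto X(T)={\rm Hom}(T,\gm)$. Applying $X(-)$ to the given sequence produces a sequence of lattices
$$
0\to X(T_3)\xrightarrow{g^*} X(T_2)\xrightarrow{f^*} X(T_1)\to 0.
$$

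The first step is to check that this sequence is exact.
\begin{enumerate}
\item Injectivity of $g^*$ is immediate, since $g$ is surjective.
\item Exactness in the middle holds because a character of $T_2$ that is trivial on $f(T_1)=\ker g$ factors through the quotient $T_2/\ker g\simeq T_3$. This uses that $g$ induces an isomorphism of algebraic groups $T_2/f(T_1)\simeq T_3$, which is automatic in characteristic zero.
\item Surjectivity of $f^*$ uses two facts. First, $f$ is a closed embedding, so $f(T_1)$ is a subtorus of $T_2$. Second, every character of a closed subgroup of a diagonalizable group extends to the whole group, because restriction $k[T_2]\to k[f(T_1)]$ is surjective and characters form bases of these coordinate rings.
\end{enumerate}

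Since $X(T_1)$ is a free abelian group of finite rank, the sequence of lattices splits. Choose a homomorphism $s\colon X(T_1)\to X(T_2)$ with $f^*\circ s={\rm id}$.

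By the anti-equivalence of categories, $s$ equals $r^*$ for a unique homomorphism of tori $r\colon T_2\to T_1$. Concretely, one can write $T_i={\rm Hom}(X(T_i),\gm)$, and then $r$ is given by precomposition with $s$. Then $(r\circ f)^*=f^*\circ r^* = f^*\circ s={\rm id}$, and faithfulness of the equivalence gives $r\circ f={\rm id}_{T_1}$. So $f$ admits a retraction.

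A retraction $r$ of $f$ yields $T_2\simeq T_1\times \ker r$. The restriction $g|_{\ker r}\colon \ker r\to T_3$ is then an isomorphism. Indeed, it is injective because $\ker r\cap f(T_1)=1$, and it is bijective because $T_2=f(T_1)\cdot\ker r$. In characteristic zero, a bijective homomorphism onto the smooth group $T_3$ is an isomorphism. The section of $g$ is therefore $\sigma=(g|_{\ker r})^{-1}$. Alternatively, one can dualize a splitting of $g^*$ directly in the same way.

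The main obstacle is to justify the exactness of the character sequence, in particular the surjectivity of $f^*$ and the identification $T_2/f(T_1)\simeq T_3$. Everything else is formal once the anti-equivalence with lattices is invoked. For these two points I would appeal to the standard facts on diagonalizable groups in \cite{Borel}.
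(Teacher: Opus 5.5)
Your proposal is correct and follows essentially the same route as the paper: pass to the exact sequence of character lattices $0\to X(T_3)\to X(T_2)\to X(T_1)\to 0$, split it using that $X(T_1)$ is free, and dualize to get a splitting of the tori. You give more detail than the paper, namely a justification of the exactness of the character sequence and the explicit passage from a retraction of $f$ to a section of $g$.
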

	\begin{proof}
		The exact sequence induces a short exact sequence of character groups
		$$
		0\to X(T_3)\xrightarrow{g^*} X(T_2)\xrightarrow{f^*} X(T_1)\to 0.
		$$
	Since $T_{i}$ is torus, $X(T_i)$ is free abelian group. Since each $X(T_1)$ is a free abelian group, the sequence splits, and dualizing yields a splitting of the original sequence.
	\end{proof}
	
	Let $T_{H}$ be a maximal torus of $H$. Note that $q(T_{H})=H/H^{(1)}$. Set $T=H^{(1)}\cap T_{H}$. Then $T$ is a maximal torus of $H^{(1)}$. We obtain a short exact sequence of tori
	$$
	1\to T\to T_{H}\to q(T_{H})\to 1,
	$$
	which splits by Lemma~\ref{lemma: split}. Let $\sigma : q(T_H)\to T_{H}$ be a section of $T_{H}\to q(T_H)$. Then $\sigma(q(T_{H}))$ is (non-canonically) isomorphic to $Z_{H}$.
	
	Thus we have a commutative diagram
	$$
	\xymatrix{
		1 \ar[r] & T \ar@{^{(}->}[d] \ar[r] & T_H \ar@{^{(}->}[d] \ar[r] & q(T_H) \ar@/^1pc/[l]^{\sigma}  \ar@{=}[d] \ar[r] & 1 \\
		1 \ar[r] & H^{(1)} \ar[r] &  H\ar[r] & H/H^{(1)} \ar[r] & 1
	}
	$$
	
	Therefore we obtain the following.
	
	\begin{proposition}\label{proposition: reductive}
		Let $H$ be a connected reductive group and let $H^{(1)}$ denote its derived subgroup. Then
		$$
		H=H^{(1)}\rtimes \sigma(q(T_H)),
		$$
		and this splitting depends on the choice of a maximal torus of $H$.
	\end{proposition}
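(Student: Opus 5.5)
Set $S:=\sigma(q(T_H))$, a subtorus of $T_H$. The plan is to show that the multiplication map $H^{(1)}\rtimes S\to H$ is an isomorphism of algebraic groups. Since $H^{(1)}$ is normal in $H$, it is enough to check three things: $H^{(1)}\cap S=\{1\}$, $H=H^{(1)}\cdot S$, and that the bijective homomorphism $H^{(1)}\rtimes S\to H$, $(x,s)\mapsto xs$, is an isomorphism of varieties and not just of abstract groups.

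For the trivial intersection, let $s\in H^{(1)}\cap S$ and write $s=\sigma(t)$ with $t\in q(T_H)$. Then $t=q(\sigma(t))=q(s)=1$, because $s\in H^{(1)}=\ker q$, and hence $s=\sigma(1)=1$. For generation, take $h\in H$ and put $s=\sigma(q(h))\in S$; this makes sense because $q(T_H)=H/H^{(1)}$. Then $q(hs^{-1})=q(h)\,q(\sigma(q(h)))^{-1}=1$, so $hs^{-1}\in H^{(1)}$ and $h=(hs^{-1})\,s$. Equivalently, $\sigma$ is a section of the whole sequence \eqref{question: split}, since $q|_{T_H}$ factors through $q$.

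For the isomorphism of varieties, write the inverse map explicitly as $h\mapsto \bigl(h\,\sigma(q(h))^{-1},\,\sigma(q(h))\bigr)$. This is a morphism because $q$ and $\sigma$ are morphisms. So the semidirect product decomposition holds as algebraic groups; in fact $S$ centralizes nothing beyond what the conjugation action $S\to{\rm Aut}(H^{(1)})$ prescribes. For the dependence on choices, $S$ is built from $T_H$ and from the splitting $\sigma$ of Lemma~\ref{lemma: split}. Conjugating $T_H$ by some $h\in H$ gives a conjugate complement $hSh^{-1}$, which in general is a different subgroup.

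I expect the only delicate point to be this scheme-theoretic issue, namely that the inverse is a morphism. Writing the inverse down explicitly, as above, settles it without appealing to separability arguments, since we are in characteristic zero anyway.
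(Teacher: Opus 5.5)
Your proof is correct and is essentially the paper's argument: the splitting $\sigma$ of $1\to T\to T_H\to q(T_H)\to 1$ from Lemma~\ref{lemma: split}, together with $q(T_H)=H/H^{(1)}$, is a section of $q$. You simply spell out the trivial-intersection, generation and inverse-morphism checks that the paper leaves implicit in its commutative diagram; the aside that $S$ ``centralizes nothing beyond'' its conjugation action is vacuous and can be dropped (in general $S$ does not centralize $H^{(1)}$, e.g.\ $\sigma(t)=\mathrm{diag}(t,1,\dots,1)$ in ${\rm GL}_n$, which is why the product is only semidirect).
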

	
	As a consequence of Proposition \ref{proposition: reductive} we obtain the following result recently obtained by P. Popov (cf.~\cite{Popov}).
	
	\begin{corol}
		The Picard group of $H$ equals the Picard group of $H^{(1)}$.
	\end{corol}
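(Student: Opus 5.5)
Since \cref{proposition: reductive} gives a semidirect product decomposition $H=H^{(1)}\rtimes S$ with $S:=\sigma(q(T_H))$ a torus, the multiplication map
$$
m: H^{(1)}\times S\longrightarrow H,\qquad (x,s)\mapsto xs,
$$
is an isomorphism of varieties (not of groups). The plan is therefore to prove $\pic(H)\simeq\pic(H^{(1)}\times S)$ and then show that $\pic(H^{(1)}\times S)\simeq \pic(H^{(1)})$.

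First I will check that $m$ is an isomorphism of varieties. It is a bijective morphism because $H^{(1)}\cap S$ is trivial: $q|_S$ is an isomorphism onto $H/H^{(1)}$, since $\sigma$ is a section of $q$. The inverse is $h\mapsto \bigl(h\,\sigma(q(h))^{-1},\,\sigma(q(h))\bigr)$, which is visibly a morphism. Hence $\pic(H)\simeq \pic(H^{(1)}\times S)$.

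Next, $S\simeq \gm^r$ is an open subset of $\mathbb A^r$, and its coordinate ring $\CC[t_1^{\pm1},\dots,t_r^{\pm1}]$ is a UFD. I will use the standard homotopy-invariance and localization facts for smooth varieties $Y$. The pullback $\pic(Y)\to\pic(Y\times\mathbb A^1)$ is an isomorphism, by normality/factoriality of $\mathbb A^1$, or via Weil divisors since $Y$ is smooth. For $Y\times\gm\subset Y\times\mathbb A^1$, the localization sequence
$$
\ZZ\cdot[Y\times 0]\to \pic(Y\times\mathbb A^1)\to \pic(Y\times\gm)\to 0
$$
has first map zero, because $Y\times 0$ is the principal divisor of $t$. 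This gives $\pic(Y\times\gm)\simeq\pic(Y)$. Inducting on $r$ with $Y=H^{(1)}\times\gm^{k}$, all of which are smooth, yields $\pic(H^{(1)}\times S)\simeq\pic(H^{(1)})$. The isomorphism is induced by pullback along the projection, with inverse given by restriction to $H^{(1)}\times\{1\}$. Composing with $m$, the isomorphism $\pic(H)\to\pic(H^{(1)})$ is simply restriction along the inclusion $H^{(1)}\hookrightarrow H$, which makes the identification natural.

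The only potentially delicate step is the localization sequence argument, specifically that Cartier and Weil divisor class groups agree and that the kernel is generated by the removed divisor. Smoothness of $H^{(1)}$, a group variety over $\CC$, makes this standard, so I do not expect a real obstacle. The main point to verify carefully is that $m$ is an isomorphism of varieties rather than just a bijection, and the explicit inverse above settles this.
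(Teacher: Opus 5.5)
Your proposal is correct and follows the same route as the paper: use the splitting $H=H^{(1)}\rtimes\sigma(q(T_H))$ to identify $H\simeq H^{(1)}\times S$ as varieties, then reduce to $\pic(H^{(1)}\times S)=\pic(H^{(1)})$. You supply more detail than the paper, which only invokes the triviality of $\pic(S)$; your homotopy-invariance and localization argument, using smoothness of $H^{(1)}$, is what actually justifies that step, and your remark that the resulting isomorphism is restriction to $H^{(1)}$ shows it does not depend on the chosen splitting.
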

	\begin{proof}
		By Proposition \ref{proposition: reductive} we have $H=H^{(1)}\rtimes \sigma(q(T_H))$. Thus, as a variety, $H$ is the product of $H^{(1)}$ and $\sigma(q(T_H))$. The natural projection $H\to H^{(1)}$ is a Zariski-trivial $\sigma(q(T_H))$-bundle, and ${\rm Pic}(\sigma(q(T_H)))$ is trivial; therefore it follows that the Picard group of $H$ is same as that of the Picard group of $H^{(1)}$. 
	\end{proof}
	\begin{cor}
	Assume that $H$ is a connected reductive algebraic group. Then the coordinate ring of $H$ is UFD if and only if the coordinate ring of $H^{(1)}$ is a UFD. 
	\end{cor}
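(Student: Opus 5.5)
The plan is to reduce the statement to the preceding corollary, which identifies ${\rm Pic}(H)$ with ${\rm Pic}(H^{(1)})$, by using the standard dictionary between unique factorization and vanishing of divisor class groups. Both $H$ and $H^{(1)}$ are connected affine algebraic groups over $\CC$, hence smooth irreducible affine varieties. Their coordinate rings $\CC[H]$ and $\CC[H^{(1)}]$ are therefore Noetherian regular integral domains, and in particular integrally closed, hence Krull domains.

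The first step is the algebraic input. A Noetherian Krull domain $R$ is a UFD if and only if its divisor class group ${\rm Cl}(R)$ vanishes, since every height one prime must then be principal. For a regular (hence locally factorial) Noetherian domain, every Weil divisor on ${\rm Spec}\, R$ is Cartier, so ${\rm Cl}(R)\simeq {\rm Pic}(R)={\rm Pic}({\rm Spec}\, R)$. Applying this to $R=\CC[H]$ and $R=\CC[H^{(1)}]$ gives
\[
\CC[H]\ \text{is a UFD} \iff {\rm Pic}(H)=0, \qquad \CC[H^{(1)}]\ \text{is a UFD} \iff {\rm Pic}(H^{(1)})=0 .
\]

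The second step invokes the preceding corollary, ${\rm Pic}(H)\simeq {\rm Pic}(H^{(1)})$. Concretely, the isomorphism comes from the variety decomposition $H\simeq H^{(1)}\times \sigma(q(T_H))$ of \cref{proposition: reductive}, together with the vanishing of the Picard group of the torus factor. It follows that ${\rm Pic}(H)=0$ if and only if ${\rm Pic}(H^{(1)})=0$, and combining this with the two equivalences above yields the claim.

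There is no real obstacle here. The only point needing care is to justify that the coordinate rings are regular, so that ${\rm Cl}={\rm Pic}$. This holds because algebraic groups over a field of characteristic zero are smooth, by Cartier's theorem.
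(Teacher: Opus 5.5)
Your proposal is correct and follows the paper's route. The paper's proof combines the preceding corollary ${\rm Pic}(H)\simeq{\rm Pic}(H^{(1)})$ with the fact that, for smooth affine varieties, the coordinate ring is a UFD exactly when the Picard group vanishes; you spell out that second fact in more detail (regular implies ${\rm Cl}={\rm Pic}$, and a Krull domain is a UFD if and only if ${\rm Cl}=0$).
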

	\begin{proof}
	Since the Picard group of $H$ is trivial if and only if the Picard group of $H^{(1)}$ is trivial. Rest of the proof follows as both $H$ and $H^{(1)}$ are smooth and affine.
	\end{proof}
	
	\begin{remark}
		For any connected reductive group $H$ there is a short exact sequence of Picard groups
		$$
		0\to X(Z_{H})/X(H)\to {\rm Pic}(H/Z_{H})\to {\rm Pic}(H)\to 0,
		$$
		(cf. \cite{Knop-Kraft-Luna-Vust}). In particular, when $H={\rm GL}_{n}$ we have $H/Z_H={\rm PGL}_{n}$ and ${\rm Pic}(H)=0$, hence ${\rm Pic}(H/Z_{H})=\ZZ/n\ZZ$.
	\end{remark}

	\section{Algebraic and Analytic Picard groups}\label{Sec5}
	
	Let $H$ be a connected complex algebraic group. Recall that $X(H)$ denotes the character group of $H$, i.e., the group of algebraic homomorphisms $H\to \gm$. 
	If $\chi\in X(H)$, then
	$$
	\chi_*:\pi_1(H)\to \pi_1(\gm)=\ZZ
	$$
	is a homomorphism of abelian groups. The following lemma summarizes the relation between characters and $\pi_1$.
	
	Throughout this article we use $H^i(-, \ZZ)$ to denote the singular cohomology with integral coefficients. 
	
	\begin{lemma}\label{lemma: natural}
		Let $H$ be a connected complex algebraic group. Then there is a natural homomorphism
			$$
			\Psi_{H}: X(H)\longrightarrow {\rm Hom}(\pi_{1}(H), \ZZ)=H^1(H, \ZZ),
			$$
			which is functorial in $H$. In fact, $\Psi_{H}$ is an isomorphism.
	\end{lemma}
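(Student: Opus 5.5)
The map $\Psi_H$ sends a character $\chi$ to the induced map on fundamental groups, $\chi_*:\pi_1(H)\to\pi_1(\gm)=\ZZ$. The identification ${\rm Hom}(\pi_1(H),\ZZ)=H^1(H,\ZZ)$ comes from the universal coefficient theorem, since ${\rm Ext}(H_0,\ZZ)=0$, together with Hurewicz, as $\pi_1(H)$ is abelian for a topological group. Functoriality is immediate: for $f:H\to H'$ and $\chi\in X(H')$ we have $(\chi\circ f)_*=\chi_*\circ f_*$. Additivity follows because, in a topological group, multiplication of loops is homotopic to concatenation, so $(\chi_1\chi_2)_*=\chi_{1*}+\chi_{2*}$.

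To prove that $\Psi_H$ is an isomorphism, I will reduce to the reductive case. Write $H=R_u(H)\rtimes L$ with $L$ a Levi subgroup. The unipotent radical $R_u(H)$ is isomorphic as a variety to affine space, so it is contractible and has no nontrivial characters. Hence restriction gives $X(H)\simeq X(L)$, and inclusion gives $\pi_1(L)\simeq\pi_1(H)$. These identifications are compatible with $\Psi$ by functoriality.

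For $L$ reductive, \cref{proposition: reductive} gives $L=L^{(1)}\rtimes S$ with $S$ a torus, and as varieties $L\simeq L^{(1)}\times S$. Since $L^{(1)}$ is semisimple, $\pi_1(L^{(1)})$ is finite and $X(L^{(1)})=0$. Restriction therefore gives an injection $X(L)\hookrightarrow X(S)$. For the topological side, $\pi_1(L)\simeq\pi_1(L^{(1)})\times\pi_1(S)$, so ${\rm Hom}(\pi_1(L),\ZZ)={\rm Hom}(\pi_1(S),\ZZ)$. I expect the main obstacle to be showing that $X(L)\to X(S)$ is surjective. I will handle it by identifying $L/L^{(1)}$ with $S$ via $q\circ\sigma$: every character of $S$ pulls back along the projection $q:L\to L/L^{(1)}\simeq S$ to a character of $L$ that restricts to the given one on $S$. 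This reduces the claim to the torus $S$.

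For a torus $S\simeq\gm^r$, the character group is $X(S)=\ZZ^r$, spanned by the coordinate projections. We have $\pi_1(S)=\ZZ^r$, generated by the standard loops $t\mapsto e^{2\pi i t}$ in each factor. The character $z\mapsto z_1^{a_1}\cdots z_r^{a_r}$ sends the $j$-th loop to degree $a_j$. So $\Psi_S$ is the identity matrix on $\ZZ^r$, hence an isomorphism. Chaining these identifications by naturality shows that $\Psi_H$ is an isomorphism.
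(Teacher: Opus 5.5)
Your proof is correct and follows essentially the same route as the paper: you define $\Psi_H(\chi)=\chi_*$, check additivity and functoriality, compute the torus case directly, and pass to reductive groups via $L=L^{(1)}\rtimes S$ and then to general $H$ via the Levi decomposition. The only differences are cosmetic. You compare along the inclusions $S\hookrightarrow L\hookrightarrow H$ by restricting characters, where the paper pulls back along the projections $H\to H/H_u$ and $H\to H/H^{(1)}$. Your explicit check that $\Psi_S$ is the identity matrix on $\ZZ^r$ also makes the torus step more precise than the paper's abstract identification of the two groups.
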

	
	\begin{proof}
		Define
		\[
		\Psi_{H}: X(H)\longrightarrow {\rm Hom}(\pi_{1}(H), \ZZ)=H^1(H, \ZZ)
		\]
		by $\Psi_{H}(\chi)=\chi_{*}$, where $\chi_{*}$ is induced on $\pi_1$ by $\chi:H\to \gm$.

		The map $\Psi_H$ is a group homomorphism: for $\chi_1,\chi_2\in X(H)$ the usual diagram comparing $\chi_1+\chi_2$ with $\chi_1,\chi_2$ shows $\Psi_H(\chi_1+\chi_2)=\Psi_H(\chi_1)+\Psi_H(\chi_2)$.
		
			\[
		\begin{array}{c@{\hspace{2cm}}c}
			\xymatrix{
				H \ar[d]_-{\Delta} \ar[r]^-{\chi_1+\chi_2} & \gm \\
				H \times H \ar[r]_-{(\chi_1,\, \chi_2)} & \gm \times \gm \ar[u]_-{m}
			}
			&
			\xymatrix{
				\pi_1(H) \ar[d]_-{\Delta_*} \ar[r]^-{(\chi_1+\chi_2)_*} & \ZZ \\
				\pi_1(H) \times \pi_1(H) \ar[r]_-{(\chi_{1*},\, \chi_{2*})} & \ZZ \times \mathbb{Z} \ar[u]_-{m_*}
			}
		\end{array}
		\]
		
		Functoriality with respect to homomorphisms $f:H'\to H$ is immediate from naturality of $\pi_1$.
		\[ 
		\begin{array}{c@{\hspace{2cm}}c}
			\xymatrix{
				X(H) \ar[d]_-{f^{*}} \ar[r]^-{\Psi_H} & {\rm Hom}(\pi_{1}(H), \ZZ) \ar[d]_{}\\
				X(H')\ar[r]_-{\Psi_{H'}} &{\rm Hom}(\pi_{1}(H'), \ZZ)}
		\end{array}
		.\]
		
		 If $H$ is a torus of dimension $r$, then $X(H)\simeq {\rm Hom}({\rm Hom}(\gm,H), \ZZ)\simeq{\rm Hom}(\ZZ^r,\ZZ)$, and $\pi_1(H)=\ZZ^r$, so $\Psi_H$ is an isomorphism.
		
		For a reductive group $H$, Proposition \ref{proposition: reductive} yields $H=H^{(1)}\rtimes H'$ with $H'\simeq \sigma(q(T_H))$. The projection $H\to H'$ induces an isomorphism $X(H')\to X(H)$, and $\pi_1(H)\to \pi_1(H')$ is an isomorphism modulo the torsion subgroup $\pi_1(H^{(1)})$, hence $\Psi_H$ is an isomorphism.
		
		Finally, if $H$ is any connected algebraic group, let $H_u$ be its unipotent radical. The exact sequence
		$$
		1\to H_u\to H\to H/H_u\to 1
		$$
		splits, and the projection induces $X(H/H_u)\simeq X(H)$. Consider $H'=H/H_u$. Then $H'$ is reductive. Thus by the previous paragraph applies; since $\pi_1(H_u)=0$, by using the homotopy long exact sequence the map $\pi_1(H)\to\pi_1(H')$ is an isomorphism. 
	\end{proof}
	
	\begin{remark}
		One can rephrase Lemma \ref{lemma: natural} as follows.	Given $\chi\in X(H)$, let $\chi^*:H^1(\gm,\ZZ)\to H^1(H,\ZZ)$ be the induced map in cohomology. Then $\Psi_H(\chi)$ is the image of the canonical generator of $H^1(\gm,\ZZ)$ under $\chi^*$. \hfill$\Box$
	\end{remark}
	
	\subsection{Characteristic map}
	
	Let $H$ be a connected complex algebraic group and let $M$ be a path-connected topological space. Suppose $p:E\to M$ is a principal $H$-bundle with $\pi_1(E)=0$ and $\pi_2(E)=0$.
	
	Given $\chi\in X(H)$, let $L_\chi$ denote the line bundle on $M$ associated to $p$ via $\chi$. This defines a homomorphism
	\begin{equation}\label{temp1}
		X(H)\to \pic^{\rm an}(M),\qquad \chi\mapsto L_\chi.
	\end{equation}
	Let
	\begin{equation}\label{temp2}
		\pic^{\rm an}(M)\to H^2(M,\ZZ),\qquad L\mapsto c_1(L)
	\end{equation}
	be the first Chern class, and let
	\begin{equation}\label{temp3}
		H^2(M,\ZZ)\to \Hom(H_2(M),\ZZ)
	\end{equation}
	be the map from the universal coefficient theorem. The Hurewicz map $\pi_2(M)\to H_2(M)$ induces
	\begin{equation}\label{temp4}
		\Hom(H_2(M),\ZZ)\to \Hom(\pi_2(M),\ZZ).
	\end{equation}
	
	The long exact sequence of homotopy for the fibration $p:E\to M$ reads
	$$
	\cdots\to \pi_2(E)\to \pi_2(M)\to \pi_1(H)\to \pi_1(E)\to \cdots.
	$$
	Under our hypotheses $\pi_2(E)=0$ and $\pi_1(E)=0$, so the connecting homomorphism gives an isomorphism $\pi_2(M)\simeq \pi_1(H)$. This yields an isomorphism
	\begin{equation}\label{temp5}
		\Hom(\pi_2(M),\ZZ)\xlongrightarrow{\;\simeq \;} \Hom(\pi_1(H),\ZZ)=H^1(H,\ZZ).
	\end{equation}
	Composing the maps \eqref{temp1}--\eqref{temp5} we obtain
	\begin{equation}\label{temp6}
		\Psi_E:X(H)\to H^1(H,\ZZ).
	\end{equation}
	
	If $h:M'\to M$ is a continuous map of path-connected spaces and $E'=h^*E$ satisfies $\pi_1(E')=0$, $\pi_2(E')=0$, then $\Psi_{E}=\Psi_{E'}$. 
	
	Milnor constructed a contractible space $EH$ equipped with a free action of $H$. The quotient space $BH=EH\big/H$ is called the \emph{classifying space} of $H$. For any space $X$ the set of isomorphism classes of principle $H$-bundles is canonically identified with the set of homotopy classes of maps $[X, BH]$.
	
	In particular, applying the above discussion for the universal principal bundle \[EH\to BH,\] we obtain $\Psi_E=\Psi_{EH}$. We write $\Psi'_H=\Psi_{EH}$.

	\begin{prop}\label{psi-psi'}
	Let $\Psi_H$ and $\Psi'_H$ be the notation as above. Then we have	$\Psi_H=\Psi'_H$.
	\end{prop}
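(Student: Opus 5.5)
Since $\Psi'_H=\Psi_E$ for every principal $H$-bundle $E\to M$ with $\pi_1(E)=\pi_2(E)=0$, it suffices to verify $\Psi_H(\chi)=\Psi_E(\chi)$ for one such bundle. The cleanest choice is an approximation to $EH\to BH$ in which the line bundle $L_\chi$ and its Chern class are completely explicit. Concretely, I would use functoriality in two directions: in the group (along $\chi:H\to\gm$) and in the bundle (along pullbacks, as recorded before the statement).

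The first step is naturality of $\Psi'$ in the group. Let $f=\chi:H\to\gm$. The bundle $EH\times^{H}\gm\to BH$ is classified by a map $B\chi:BH\to B\gm$, and $L_\chi$ is the pullback along $B\chi$ of the universal line bundle $L_{\rm id}$ on $B\gm=\CC P^\infty$. Hence $c_1(L_\chi)=(B\chi)^*c_1(L_{\rm id})$. The connecting isomorphisms $\pi_2(BH)\simeq\pi_1(H)$ and $\pi_2(B\gm)\simeq\pi_1(\gm)$ commute with $(B\chi)_*$ and $\chi_*$, because the long exact homotopy sequence is natural for the map of fibrations $EH\to EH\times^H E\gm$ covering $B\chi$. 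Likewise the Hurewicz and universal coefficient maps are natural. Chasing the composite \eqref{temp1}--\eqref{temp5} therefore gives
\[
\Psi'_H(\chi)=\Psi'_{\gm}(\mathrm{id})\circ\chi_*,
\]
where $\Psi'_{\gm}(\mathrm{id})\in\Hom(\pi_1(\gm),\ZZ)$. On the other side, $\Psi_H(\chi)=\chi_*=\Psi_{\gm}(\mathrm{id})\circ\chi_*$ by definition, since $\Psi_{\gm}(\mathrm{id})=\mathrm{id}_\ZZ$. So the proposition reduces to the single case $H=\gm$, $\chi=\mathrm{id}$.

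The second step handles that case. Take $E=\CC^{n+1}\setminus\{0\}\to\CC P^n$ with $n\ge 2$, which is $2$-connected, so $\Psi'_{\gm}=\Psi_E$. The associated line bundle for $\chi=\mathrm{id}$ is the tautological bundle $\OO(-1)$ (or $\OO(1)$, depending on the action convention), so $c_1=\mp[\text{hyperplane}]$. For the generator $[\CC P^1]$ of $\pi_2(\CC P^n)\simeq H_2(\CC P^n)$, we must compute its image in $\pi_1(\gm)$ under the connecting map. Restricting to $\CC P^1$, the bundle is the Hopf fibration $S^3\to S^2$, whose connecting map sends $[S^2]$ to the class of the fibre circle. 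Pairing $c_1$ with $[\CC P^1]$ gives $\pm1$. This shows $\Psi'_{\gm}(\mathrm{id})=\pm\mathrm{id}_\ZZ$.

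The main obstacle is pinning down the sign. We need $c_1(\OO(-1))[\CC P^1]$ to equal the connecting-map image of $[\CC P^1]$ under the chosen orientation conventions for $\pi_1(\gm)=\ZZ$ and for the connecting homomorphism. I would settle this by a clutching computation. The line bundle on $S^2=D_+\cup D_-$ associated to the Hopf bundle has clutching function $S^1\to\gm$ equal to the inverse of the boundary loop produced by the connecting map, and the standard identity "$c_1$ equals the degree of the clutching function" then fixes the sign. If the sign comes out as $-1$ under the paper's conventions, then either the convention in \eqref{temp5} or the action defining $L_\chi$ must be adjusted (using $\chi^{-1}$), and I would note this. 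With this fixed, the identity holds for $\gm$, and by the first step for all $H$.
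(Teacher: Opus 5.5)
Your proposal is correct and follows the paper's route. Functoriality of $\Psi'$ along $\chi\colon H\to\gm$ reduces the claim to $\Psi'_{\gm}(\mathrm{id})$, which is then computed with the Hopf bundle. The paper takes $\CC^2\setminus\{0\}\to\PP^1$ directly, since $S^3$ is already $2$-connected, so your $n\ge 2$ detour is harmless but unnecessary. Your caution about the sign is justified: the paper settles it only by declaring the associated line bundle to be $\OO(1)$, i.e.\ by a choice of convention for $L_\chi$, and the sign does not affect the later applications, which use only that the composite map is an isomorphism.
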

	
	We prove this proposition in a sequence of lemmas.
	
	\begin{lemma}\label{lemma: homotopy-pair}
		Let $p:E\to B$ and $p':E'\to B'$ be fiber bundles.  Let $f:B\to B'$, $g:E\to E'$ be maps such that the following diagram commutes

		\centerline{\xymatrix{
				E\ar[d]_p\ar[r]^{g} & E'\ar[d]^{p'}\\
				B\ar[r]_f & B'
		}}
	 Fix $b_0\in B$, $b_0'=f(b_0)$, and write $F=p^{-1}(b_0)$, $F'=p'^{-1}(b_0')$. Then the induced diagram of homotopy long exact sequences is commutative, where the rows are exact:
	 
	 \centerline{\xymatrix{
	 		\cdots \ar[r] & \pi_n(F)\ar[d]^{g_*}\ar[r] & \pi_n(E)\ar[d]^{g_*}\ar[r] & \pi_n(B)\ar[d]^{f_*}\ar[r] & \pi_{n-1}(F)\ar[d]^{g_*}\ar[r] & \cdots \\
	 		\cdots \ar[r] & \pi_n(F')\ar[r] & \pi_n(E')\ar[r] & \pi_n(B')\ar[r] & \pi_{n-1}(F')\ar[r] & \cdots \\
	 }}
	\end{lemma}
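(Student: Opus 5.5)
The plan is to establish this naturality statement directly from the construction of the homotopy long exact sequence of a fibration, checking commutativity square by square. Fiber bundles over reasonable (paracompact) bases are Serre fibrations, so $p$ and $p'$ each have a homotopy long exact sequence. Since $p'\circ g=f\circ p$, the map $g$ sends $F=p^{-1}(b_0)$ into $F'=p'^{-1}(b_0')$. Hence $g$ restricts to a map of pairs $g:(E,F)\to(E',F')$, and also to a map of triples $(E,F,e_0)\to(E',F',g(e_0))$ once a basepoint $e_0\in F$ is fixed. The vertical arrows labelled $g_*$ on the fiber terms are understood as induced by this restriction $g|_F$.

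First, the squares involving $\pi_n(F)\to\pi_n(E)$ commute by functoriality of $\pi_n$. This uses $g\circ i=i'\circ g|_F$, where $i$ and $i'$ are the fiber inclusions. Similarly, the squares $\pi_n(E)\to\pi_n(B)$ commute because $p'\circ g=f\circ p$.

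Next, for the connecting squares I will use the standard factorization. The boundary map $\partial:\pi_n(B,b_0)\to\pi_{n-1}(F,e_0)$ is the composite of the inverse of the isomorphism $p_*:\pi_n(E,F,e_0)\xrightarrow{\simeq}\pi_n(B,b_0)$ with the boundary $\pi_n(E,F,e_0)\to\pi_{n-1}(F,e_0)$ of the pair, which restricts a relative class to the face. Naturality of the pair boundary under the map of pairs $g$ is immediate: restricting $g\circ\alpha$ to the face equals $g|_F$ applied to the restriction of $\alpha$. Naturality of $p_*$ is the relation $p'_*\circ g_*=f_*\circ p_*$ on relative groups. Combining the two gives
\[
\partial'\circ f_*=\partial'\circ f_*\circ p_*\circ p_*^{-1}=\partial'\circ p'_*\circ g_*\circ p_*^{-1}=g_*\circ\partial,
\]
where $\partial'$ denotes the boundary of $p'$, and this is the commutativity of the connecting square.

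Alternatively, one can argue with homotopy lifting: lift a representative $\alpha:(I^n,\partial I^n)\to(B,b_0)$ to $\tilde\alpha:I^n\to E$ with the other faces sent to $e_0$. Then $g\circ\tilde\alpha$ is a lift of $f\circ\alpha$ of the same form, so it computes $\partial'[f\circ\alpha]$. Its restriction to the remaining face is $g|_F$ applied to $\tilde\alpha$ on that face.

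Exactness of the rows is the standard theorem for fibrations, so nothing further is needed there. The only point requiring care is the connecting square, in particular checking that $p_*$ on relative homotopy groups is an isomorphism. That holds for Serre fibrations and is part of the standard proof of the long exact sequence; I will quote it rather than reprove it. The low-degree terms $\pi_1$ and $\pi_0$, which are only groups or pointed sets, are handled by the same argument, since only naturality of the maps is being claimed.
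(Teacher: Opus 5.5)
Your proposal is correct and takes essentially the same route as the paper. The paper simply cites the construction in Hatcher's Theorem~4.41, where the connecting map is the pair boundary composed with the inverse of the isomorphism $p_*:\pi_n(E,F)\to\pi_n(B)$, together with functoriality of the pair boundary; your factorization argument spells out exactly this.
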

	\begin{proof}
	Follows from the proof of \cite[Chapter ~4, Theorem 4.41]{Hatcher} and the functoriality of the connecting map for the homotopy groups of a pair. 
	\end{proof}
	
	\begin{lemma}
	Let $\Psi'_H$ be the map defined as above. Then $\Psi'_H$ is functorial in $H$.
	\end{lemma}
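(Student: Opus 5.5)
We need to show that for a homomorphism $f:H'\to H$ of connected complex algebraic groups the square
\[
\xymatrix{
X(H)\ar[d]_{f^*}\ar[r]^-{\Psi'_H} & H^1(H,\ZZ)\ar[d]^{f^*}\\
X(H')\ar[r]_-{\Psi'_{H'}} & H^1(H',\ZZ)
}
\]
commutes. The plan is to realize $f$ by a map of universal bundles and then check the five maps \eqref{temp1}--\eqref{temp5} one at a time.

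\textbf{Step 1: a map of universal bundles.} By Milnor's construction (which is functorial), $f$ induces an $f$-equivariant map $Ef:EH'\to EH$ covering a map $Bf:BH'\to BH$. Alternatively, one can use the principal $H$-bundle $EH'\times^{H'}H\to BH'$ together with its classifying map $Bf$.

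\textbf{Step 2: the first three maps.} For $\chi\in X(H)$, the pullback $(Bf)^*L_\chi$ on $BH'$ is the line bundle $EH'\times^{H'}\CC_{\chi\circ f}$. Indeed, $EH'\times^{H'}H\times^H\CC_\chi\simeq EH'\times^{H'}\CC_{\chi\circ f}$, and the pullback of $EH\to BH$ along $Bf$ is $EH'\times^{H'}H$. Hence \eqref{temp1} is natural: $(Bf)^*L_\chi=L_{f^*\chi}$. The first Chern class is natural under pullback, which handles \eqref{temp2}. The universal coefficient map and the Hurewicz map are natural for the continuous map $Bf$, which handles \eqref{temp3} and \eqref{temp4}. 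So the composite up to $\Hom(\pi_2(BH),\ZZ)$ intertwines $f^*$ on characters with $(Bf)_*^\vee$.

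\textbf{Step 3: the connecting isomorphism.} It remains to check that the isomorphisms $\partial:\pi_2(BH)\simeq\pi_1(H)$ and $\partial':\pi_2(BH')\simeq\pi_1(H')$ satisfy $\partial\circ(Bf)_*=f_*\circ\partial'$. Apply \cref{lemma: homotopy-pair} to the commutative square formed by $Ef$ and $Bf$. The map induced on fibers over the basepoint is $f:H'\to H$, after identifying each fiber with its group via a chosen basepoint $e_0$ and $Ef(e_0)$, using equivariance. \cref{lemma: homotopy-pair} then gives exactly this compatibility. Dualizing, \eqref{temp5} is natural.

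Composing the naturality statements from Steps 2 and 3 gives $f^*\circ\Psi'_H=\Psi'_{H'}\circ f^*$.

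The main obstacle is Step 1 combined with the fiber identification in Step 3. The analytic line bundles live on the paracompact space $BH$, so $\pic^{\rm an}$ should be read as topological line bundles there, or one should work with an approximating smooth model. In addition, the identification of the fibers with $H'$ and $H$ must be compatible with $Ef$. I would handle both points with explicit basepoints. If Milnor's construction causes trouble, I would instead use the independence statement $\Psi_E=\Psi_{E'}$ for pullbacks to reduce to any convenient model.
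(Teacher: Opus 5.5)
Your proposal is correct and follows essentially the same route as the paper: a map of universal bundles covering $Bf$, the identification $(Bf)^*L_\chi\simeq L_{\chi\circ f}$, naturality of $c_1$ and of the universal coefficient and Hurewicz maps, and \cref{lemma: homotopy-pair} for the compatibility of the connecting isomorphisms $\pi_2(BH)\simeq\pi_1(H)$. Your caveats about reading $\pic^{\rm an}(BH)$ as topological line bundles and choosing basepoints compatibly with $Ef$ are points the paper passes over silently, and they are handled exactly as you suggest.
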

	\begin{proof}
	Let $f:H\to H'$ be a homomorphism of algebraic groups. Let $Bf:BH\to BH'$ be the induced map of classifying spaces. Then the pullback $(Bf)^{*}EH'$ is a principal $H'$-bundle on $BH$. Note that this is equivalent to the principal $H'$-bundle obtained from $EH$ by extension of structure groups via $f$. Let $\chi'\in X(H')$ and let $\chi=\chi'\circ f$. Thus there is a commutative diagram (which is not Cartesian in general), where the typical fibers are $H$ and $H'$, respectively:
	
	\centerline{\xymatrix{
			EH\ar[d]\ar[r] & EH'\ar[d]\\
			BH\ar[r]_{Bf} & BH'
	}}
	
	Therefore by Lemma~\ref{lemma: homotopy-pair}, one has the following diagram, whose rows are parts of the homotopy long exact sequences:
	
	\centerline{\xymatrix{
			\pi_2(BH)\ar[r]\ar[d]_{\pi_2(Bf)} & \pi_1(H)\ar[d]^{f_*}\\
			\pi_2(BH')\ar[r] & \pi_1(H')
	}}
	
	This shows that (\ref{temp5}) is functorial in $H$.
	
	The maps (\ref{temp2})--(\ref{temp4}) depend entirely on the map $Bf:BH\to BH'$, hence all the relevant diagrams are commutative.
	
	If $L_{\chi'}$ is the line bundle on $BH'$ associated to $EH'$ via $\chi'$, then $(Bf)^*L_{\chi'}$ is isomorphic to $L_\chi$, the line bundle on $BH$ associated to $EH$ via the character $\chi$. This means, the map (\ref{temp1}) is functorial in $H$, in the sense that the following diagram commutes:
	
	\centerline{\xymatrix{
			X(H')\ar[d]_{f^*}\ar[r] & \pic^{\rm an}(BH')\ar[d]^{f^*}\\
			X(H)\ar[r] & \pic^{\rm an}(BH)
	}}
	
	This completes the proof that $\Psi'_H$ is functorial in $H$.
	
	\end{proof}
	
	\begin{lemma}
		Under $\Psi'_{\gm}$ the image of the identity character $(\mathrm{id}:\gm\to\gm)$ is the canonical generator of $H^1(\gm)$.
	\end{lemma}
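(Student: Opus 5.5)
Since $\Psi'_{\gm}=\Psi_{E}$ for any principal $\gm$-bundle $E\to M$ with $\pi_1(E)=\pi_2(E)=0$ (by the pullback invariance noted before Proposition~\ref{psi-psi'}), I will compute it with a concrete model. Take $E=\CC^{N+1}\setminus\{0\}\to M=\PP^{N}$ with $N\ge 2$. The space $E$ is homotopy equivalent to $S^{2N+1}$, so $\pi_1(E)=\pi_2(E)=0$. The map classifying this bundle into $B\gm$ pulls back $E\gm$ to it, so $\Psi'_{\gm}(\mathrm{id})=\Psi_E(\mathrm{id})$. (One can also let $N\to\infty$, since $\CC^\infty\setminus\{0\}$ is a model of $E\gm$; the finite $N$ suffices.)

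I will then trace the identity character through \eqref{temp1}--\eqref{temp5}. First, the line bundle associated to $E$ via $\mathrm{id}$ is $L=E\times^{\gm}\CC$. Depending on the action convention this is $\OO(-1)$ or $\OO(1)$; I will fix the convention as the one where it is the tautological bundle $\OO(-1)$. Its Chern class is $c_1(\OO(-1))=-h$, where $h$ is the hyperplane class generating $H^2(\PP^N,\ZZ)$. Under \eqref{temp3} and \eqref{temp4}, and since $\pi_2(\PP^N)\to H_2(\PP^N)$ is an isomorphism, $-h$ becomes the homomorphism sending the class $[\PP^1]$ of a line to $-1$.

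The key step, and the main obstacle, is to identify the connecting map $\partial:\pi_2(\PP^N)\to\pi_1(\gm)$ on the generator $[\PP^1]$ together with its sign. I will restrict to a line $\PP^1\subset\PP^N$. This is legitimate because Lemma~\ref{lemma: homotopy-pair} gives naturality of the connecting map, and over the line the bundle is $\CC^2\setminus\{0\}\to\PP^1$, which is the Hopf fibration $S^3\to S^2$ up to homotopy. Cover $\PP^1$ by the two standard charts. The clutching function of $\OO(-1)$ along the equator is $z\mapsto z^{\pm1}$, and $\partial[\PP^1]$ is the class of this clutching loop in $\pi_1(\gm)$. With matched orientation conventions, $\partial[\PP^1]=-1$ times the canonical generator, which is the same sign that enters $c_1(\OO(-1))[\PP^1]=-1$. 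Pushing the evaluation map $[\PP^1]\mapsto -1$ through the inverse identification \eqref{temp5} then gives the map $\pi_1(\gm)\to\ZZ$ sending the canonical generator to $+1$, i.e.\ the canonical generator of $H^1(\gm)$. Because the two minus signs cancel, the answer does not depend on whether the associated bundle is $\OO(1)$ or $\OO(-1)$, provided the conventions are used consistently.

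Rather than fuss over each sign separately, I would fix the sign convention once by the standard fact that the Euler class of a complex line bundle, equivalently its $c_1$, evaluated on the base sphere equals the degree of the clutching function. This is exactly the compatibility needed: the evaluation of $c_1(L)$ on $[S^2]$ equals the image of $\partial[S^2]$ under $\chi_*$. I would prove this directly for the clutching construction over $S^2$, since that is where all the content of the lemma lies.
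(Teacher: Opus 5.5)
Your route is the same as the paper's: compute $\Psi_E$ for $E=\CC^2\setminus\{0\}\to\PP^1$. Passing through $\PP^N$ with $N\ge 2$ is unnecessary, since $\CC^2\setminus\{0\}\simeq S^3$ is already $2$-connected. The problem is the step you yourself identify as the whole content of the lemma, namely the sign of the connecting map. You assert this sign rather than compute it, and for the convention you fixed it is wrong.

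Take $E=\CC^2\setminus\{0\}$ with $\gm$ acting by scalars. With the usual rule $(e,z)\sim(et,t^{-1}z)$, the bundle $L_{\rm id}$ is $\OO(-1)$ via $[v,z]\mapsto zv$, as you say. Use the standard connecting map: restrict a relative class to $\partial D^2$, oriented as a boundary. The degree-one map $D^2\to\PP^1$, $w\mapsto[1-|w|:w]$, sends $\partial D^2$ to $[0:1]$. It lifts to $w\mapsto(1-|w|,w)$, whose boundary loop is $s\mapsto(0,e^{2\pi i s})=e^{2\pi i s}\cdot(0,1)$. So $\partial[\PP^1]=+\gamma$, where $\gamma$ is the canonical generator of $\pi_1(\gm)$. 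There is no second minus sign, and your computation actually gives $\Psi'_{\gm}({\rm id})(\gamma)=\langle c_1(\OO(-1)),[\PP^1]\rangle=-1$.

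Equivalently, the clutching compatibility you plan to prove, $c_1(L)[S^2]=\chi_*\partial[S^2]$, is false under these conventions. If the clutching function $\phi$ is normalized so that $\partial[S^2]=[\phi]$, then $c_1(E\times^{\gm}\CC)[S^2]=-\deg\phi$. For example, $\OO(-1)$ has $\phi(z)=z$ but degree $-1$.

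Your claim that the sign is independent of whether $L_{\rm id}$ is $\OO(1)$ or $\OO(-1)$ also fails for the convention that matters. The map $\partial\colon\pi_2(\PP^1)\to\pi_1(\gm)$ depends only on the principal bundle, not on how $L_\chi$ is formed. So replacing $E\times^{\gm}\CC_\chi$ by $E\times^{\gm}\CC_{\chi^{-1}}$ turns $\OO(-1)$ into $\OO(1)$ and flips $\Psi'_{\gm}({\rm id})$. Replacing the action on $E$ by its inverse flips both signs at once, but the common value stays $-1$ under the usual rule. This must be so, because $\Psi_E$ does not depend on the model $E$.

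Hence the lemma holds exactly for the convention in which the scalar-action bundle $\CC^2\setminus\{0\}$ produces $L_{\rm id}=\OO(1)$. The paper's proof tacitly adopts this when it calls $E$ the bundle ``corresponding to $\OO(1)$'' and sends the generator of $\pi_2$ to the standard loop. Then $c_1$ contributes $+1$ and $\partial$ contributes $+\gamma$, with no cancellation involved.

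To repair your argument, fix this convention explicitly and compute $\partial$ by an explicit lift as above, instead of appealing to ``matched orientation conventions.'' The sign is harmless downstream, because later arguments only use that $\Psi'_H=\pm\Psi_H$ is an isomorphism.
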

	\begin{proof}
		Consider the principal $\gm$-bundle $p:E=\CC^2\setminus\{0\}\to \PP^1$ corresponding to $\OO(1)$. Since $E$ is homotopy equivalent to $3$-sphere $S^3$, we have $\pi_1(E)=\pi_2(E)=0$.  Note that we have an identification $\PP^1$ with $2$-sphere $S^2$. Under the composition of maps (\ref{temp1})--(\ref{temp4}), the image of the generator $(id:\gm\to \gm)\in X(\gm)$ corresponds to the homomorphism $\pi_2(\PP^1)\to \ZZ$, which takes the generator $(id:S^2\to \PP^1)\in \pi_2(\PP^1)$ to $1$. Under the connecting map $\pi_2(\PP^1)\to \pi_1(\gm)$, the generator $(id:S^2\to \PP^1)$ goes to the generator $(id:S^1\hkr \CC-\{0\})$. This completes the proof.
	\end{proof}
	
	\noindent
	\begin{proof}[Proof of \cref{psi-psi'}]
	Let $\chi\in X(H)$. For the group homomorphism $\chi:H\to \gm$, consider the commutative diagram given by the functoriality of $\Psi'$:
	$$
	\xymatrix{
		X(\gm)\ar[r]^{\Psi'_{\gm}}\ar[d]^{\chi^*} & H^1(\gm, \ZZ)\ar[d]^{\chi^*}\\
		X(H)\ar[r]_{\Psi'_H} & H^1(H,\ZZ).
	}
	$$

	By the remark following Lemma \ref{lemma: natural}, $\Psi_H(\chi)$ is the image of the canonical generator of $H^1(\gm,\ZZ)$ under $\chi^*$. The previous lemma identifies $\Psi'_{\gm}(\mathrm{id})$ with that generator, and the commutativity of the diagram yields $\Psi_H=\Psi'_H$. 
	
	\end{proof}

\begin{lemma} \label{injective-v-v'} 
	If
	\[
	0 \;\to\; V \stackrel{i}{\longrightarrow} A \stackrel{f}{\longrightarrow} B 
	\stackrel{j}{\longrightarrow} V'
	\]
	is an exact sequence of abelian groups, where $V$ and $V'$ are vector spaces,  then for every $n$, the induced map 
	\[
	\overline{f} : A_n \;\to\; B_n
	\]
	is injective.
	
\end{lemma}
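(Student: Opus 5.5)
Here I read $A_n$ and $B_n$ as the $n$-cotorsion groups $A/nA$ and $B/nB$, in line with the use of $n$-cotorsion in the introduction. The map $\overline{f}$ is then induced by $f$, which is well defined because $f(nA)\subseteq nB$. The plan is a direct diagram chase. It uses two properties of a $\mathbb{Q}$-vector space: it is torsion-free, and it is divisible. Torsion-freeness of $V'$ will let us lift an element of $B$ to $A$, and divisibility of $V$ will let us absorb the resulting error term into $nA$.

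First I take $a\in A$ with $\overline{f}(a+nA)=0$, so that $f(a)=nb$ for some $b\in B$. Applying $j$ and using exactness at $B$ (namely $j\circ f=0$) gives
\[
n\,j(b)=j(nb)=j(f(a))=0 .
\]
Since $V'$ is a vector space, it has no nonzero torsion, so $j(b)=0$. Exactness at $B$ then yields $a'\in A$ with $f(a')=b$.

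Next, $f(a-na')=nb-nb=0$, so exactness at $A$ gives $v\in V$ with $a-na'=i(v)$. Because $V$ is divisible, we can write $v=nv'$ for some $v'\in V$. Therefore
\[
a=n\bigl(a'+i(v')\bigr)\in nA,
\]
which shows that $\ker\overline{f}=0$. I note that exactness at $V$ (injectivity of $i$) is not needed for this argument.

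The argument is entirely elementary, so I do not expect a genuine obstacle. The only point requiring care is to invoke the right hypothesis at each stage: torsion-freeness of $V'$ to pull $b$ back to $A$, and divisibility of $V$ to absorb the kernel term into $nA$.
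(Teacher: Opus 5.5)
Your proof is correct and follows the paper's own argument essentially verbatim: the same diagram chase, using torsion-freeness of $V'$ to lift $b$ to some $a'\in A$ and divisibility of $V$ to write $a=n\bigl(a'+i(v/n)\bigr)$. Your remark that injectivity of $i$ is never used is also accurate; the argument does need $n$ to be a positive integer, which both you and the paper tacitly assume.
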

\begin{proof}
	Let $a \in A$ be such that $\overline{f}(\overline{a}) = 0$.  
	Then there exists some $b \in B$ such that $f(a) = n b$.  
	But
	\[
	0 = j f(a) = n j(b),
	\]
	and since $V'$ is a vector space, we have $j(b) = 0$.  
	Hence there exists $a' \in A$ such that $b = f(a')$.  
	Thus
	\[
	f(a - n a') = 0,
	\]
	so $a - n a' = i(v)$ for some $v \in V$.  
	As $V$ is a vector space, we may write
	\[
	a = n a' + n i\!\left(\tfrac{v}{n}\right) = n a'',
	\]
	where $a'' = a' + i(v/n)$.  
	Therefore, $\overline{a} = 0$ in $A_n$.
\end{proof}

Let $G$ be a semisimple, simply connected and let $H$ be a closed connected subgroup of $G$. For $\chi\in X(H)$, denote by $L_\chi\in\pic(G/H)$ the associated line bundle. 

Consider the maps
\[
c_1\colon \pic^{\rm an}(G/H)\to H^2(G/H,\ZZ),
\quad
H^2(G/H,\ZZ)\to \Hom(H_2(G/H),\ZZ),
\]
the Hurewicz map $\Hom(H_2(G/H),\ZZ)\to \Hom(\pi_2(G/H),\ZZ)$, and the identification \[\Hom(\pi_2(G/H),\ZZ)=\Hom(\pi_1(H),\ZZ)=H^1(H,\ZZ).\] Let
\[
\alpha\colon \pic^{\mathrm{an}}(G/H)\to H^1(H,\ZZ)
\]
be the composition of these maps.

\begin{prop}\label{commute-1}
	With the notation as above, the following diagram commutes
	\[
	\xymatrix{
		X(H)\ar[r]\ar[d]^{\Psi_H} & \pic^{}(G/H)\ar[d] \\
		H^1(H,\ZZ) & \pic^{\mathrm{an}}(G/H)\ar[l]_\alpha
	}
	\]
	where the upper horizontal map sends $\chi$ to $L_\chi$.
\end{prop}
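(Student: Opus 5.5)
The plan is to reduce the statement to Proposition~\ref{psi-psi'}, applied to the principal $H$-bundle $q\colon G\to G/H$ as $E\to M$. First we check the hypotheses of the characteristic map construction. $G$ is a connected, simply connected complex semisimple group, so $\pi_1(G)=0$. Moreover $\pi_2$ of any Lie group vanishes; by Cartan's theorem this holds for the maximal compact subgroup $K$ of $G$, and $G$ deformation retracts onto $K$. So $E=G$ satisfies $\pi_1(E)=\pi_2(E)=0$. Also $M=G/H$ is path-connected. Hence the construction \eqref{temp1}--\eqref{temp6} applies and gives $\Psi_G\colon X(H)\to H^1(H,\ZZ)$; to avoid clashing with the notation $\Psi_H$ we call it $\Psi_{q}$. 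Its last step uses the connecting isomorphism $\pi_2(G/H)\simeq\pi_1(H)$, which is exactly the identification used to define $\alpha$.

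Next we unwind the composite. Going around the diagram, $\chi\in X(H)$ is sent to the algebraic line bundle $L_\chi=G\times^H\CC_\chi$, then to its analytification, then through $\alpha$. The analytification of $G\times^H\CC_\chi$ is the analytic line bundle associated to the (analytic) principal $H$-bundle $G\to G/H$ via $\chi$, which is precisely the map \eqref{temp1} for $E=G$. The maps $c_1$, the universal coefficient map, the Hurewicz map and the connecting identification that define $\alpha$ coincide with \eqref{temp2}--\eqref{temp5}. Therefore the composite $X(H)\to\pic(G/H)\to\pic^{\rm an}(G/H)\xrightarrow{\alpha}H^1(H,\ZZ)$ equals $\Psi_{q}$.

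It remains to show $\Psi_q=\Psi_H$. Let $h\colon G/H\to BH$ be a classifying map, so that $h^*EH\simeq G$ as principal $H$-bundles. The remark after \eqref{temp6} gives $\Psi_q=\Psi_{EH}=\Psi'_H$, and Proposition~\ref{psi-psi'} gives $\Psi'_H=\Psi_H$. This proves commutativity.

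The main obstacle is this invariance step, which the paper only asserts. To justify it, I would use the map of bundles $G\to EH$ covering $h$. Lemma~\ref{lemma: homotopy-pair} then shows that the connecting maps $\pi_2(G/H)\to\pi_1(H)$ and $\pi_2(BH)\to\pi_1(H)$ are compatible via $h_*$, with the identity on the fibre $H$. Line bundles associated to $\chi$ pull back under $h$, and so do Chern classes; the universal coefficient and Hurewicz maps are natural. A minor technical point is that $BH$ is only a topological space, so ``$\pic^{\rm an}$'' there should be read as topological line bundles. Since only $c_1$ is used afterwards, this causes no difficulty.
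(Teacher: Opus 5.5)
Your proposal is correct and follows the same route as the paper. You identify the composite with the characteristic map for $G\to G/H$, pass to the universal bundle $EH\to BH$ via a classifying map to get $\Psi'_H$, and conclude with Proposition~\ref{psi-psi'}; your justification of the pullback-invariance step via Lemma~\ref{lemma: homotopy-pair} and naturality of $c_1$, the universal coefficient and Hurewicz maps (reading $\pic^{\rm an}(BH)$ as topological line bundles) fills in a step the paper only asserts.
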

\begin{proof}
	The universal principal bundle $EH\to BH$ identifies the construction used to define $\Psi'_H$ with the composition
	\[
	X(H)\to \pic^{}(G/H)\to \pic^{\mathrm{an}}(G/H)\xrightarrow{\alpha} H^1(H,\ZZ),
	\]
	and Proposition \ref{psi-psi'} shows this equals $\Psi_H$, hence the diagram commutes.
\end{proof}

\begin{lemma}\label{lemma: alg pic}
	Let $G$ be a connected linear algebraic group such that ${\rm Pic}(G)=0$. Let $H$ be a closed subgroup of $G$. Then ${\rm Pic^{}}(G/H)=X(H)$. 
\end{lemma}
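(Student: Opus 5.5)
We will establish the statement by comparing $\mathbb G_m$-torsors on $G$ that are $H$-equivariant with line bundles on $G/H$. Since $q\colon G\to G/H$ is a principal $H$-bundle, pulling back along $q$ identifies ${\rm Pic}(G/H)$ with the group ${\rm Pic}^H(G)$ of $H$-linearized line bundles on $G$. Here $H$ acts on $G$ by right translation, and the identification is descent for the fppf/étale locally trivial torsor $q$.

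The map $X(H)\to{\rm Pic}(G/H)$, $\chi\mapsto L_\chi=G\times^H\mathbb A^1_\chi$, corresponds under this identification to the trivial line bundle $\OO_G$ with linearization twisted by $\chi$. It is a homomorphism because $L_{\chi_1}\otimes L_{\chi_2}\simeq L_{\chi_1\chi_2}$.

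Surjectivity comes next. Let $L$ be a line bundle on $G/H$. Then $q^*L\in{\rm Pic}(G)=0$, so $q^*L\simeq\OO_G$ as a line bundle. The $H$-linearization on $\OO_G$ is then given by a cocycle $\phi\colon G\times H\to\gm$, that is, an invertible function satisfying $\phi(g,h_1h_2)=\phi(g,h_1)\phi(gh_1,h_2)$. By Rosenlicht's lemma, every invertible regular function on the product of the irreducible varieties $G$ and $H$ has the form $f(g)u(h)$ with $f\in\OO(G)^\times$ and $u\in\OO(H)^\times$. After normalizing $f(1)=1$ and $u(1)=1$, both are characters up to scalars. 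Since $G$ is connected, $f$ is a character of $G$ times a constant. Substituting into the cocycle identity and setting $g=1$ and $h=1$ shows that $u$ is a character $\chi$ of $H$. The factor coming from $G$ is a coboundary: changing the trivialization of $\OO_G$ by the unit $f$ alters $\phi$ by $f(gh)/f(g)$. With $f$ a character, this equals $f(h)$, so it is absorbed into $\chi$ by the restriction $f|_H$. Hence $(q^*L,\text{linearization})\simeq\OO_G$ twisted by a character, and $L\simeq L_\chi$.

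Injectivity then follows. If $L_\chi$ is trivial, the two linearizations $\OO_G(\chi)$ and $\OO_G(1)$ are isomorphic via an $f\in\OO(G)^\times$ with $f(gh)=\chi(h)f(g)$. Such an $f$ is a scalar times a character $\eta$ of $G$, and then $\chi=\eta|_H$. In the intended setting ($G$ semisimple) $X(G)=0$, so $\chi=1$. For a general connected $G$ with ${\rm Pic}(G)=0$, the honest statement is ${\rm Pic}(G/H)\simeq X(H)/{\rm res}\,X(G)$. I would therefore either note that $X(G)$ vanishes in the cases used, namely for $G$ semisimple simply connected, or read the statement with this quotient.

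The main obstacle I expect is making the cocycle computation rigorous, specifically justifying that every unit on $G\times H$ splits as a product. The plan is to invoke Rosenlicht's theorem, which says $\OO(X\times Y)^\times=\OO(X)^\times\cdot\OO(Y)^\times$ for irreducible varieties $X$ and $Y$ over $\CC$, together with the fact that a unit on a connected algebraic group equal to $1$ at the identity is a character. Neither the connectedness of $H$ nor descent along $q$ requires anything beyond these facts.
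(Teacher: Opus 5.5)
Your proposal is correct and follows essentially the paper's route: you pull back to an $H$-linearization of $\OO_G$ using ${\rm Pic}(G)=0$ and apply Rosenlicht's theorem on units. Your caveat is justified, since the paper's step ``$\OO(G)^\times=\CC^\times$'' holds only when $X(G)=0$ (for $G=H=\gm$ one has ${\rm Pic}(G/H)=0\neq X(H)$), so in general the answer is $X(H)/{\rm res}\,X(G)$, which is harmless for the semisimple $G$ used later.

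One small correction: a factor $\eta(g)$ in $\phi$ cannot be removed as a coboundary, because the coboundary of a character $\eta$ is $\eta(h)$. Instead, the cocycle identity at $h_1=h_2=1$ gives $\phi(g,1)=1$, which forces the $G$-factor to be constant; only then does setting $g=1$ show that $u$ is a character.
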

\begin{proof}
	Indeed, any line bundle on $G/H$ is an $H$-equivariant line bundle on $G$. Since ${\rm Pic}(G)=0$, this means a line bundle on $G/H$ is given by a $H$-linearization of the trivial line bundle on $G$. By Rosenlicht's result, $\OO(G)^{\times}$ is just $\CC^{\times}$, so a $H$-linearization of the trivial line bundle on $G$ is just a character of $H$. 
\end{proof}

\begin{prop} \label{brauer-3} Let $G$ be a connected, simply connected, semisimple algebraic group and let $H$ be a closed connected subgroup of $G$. Let $M= G/H$. Then we have the following.
	\begin{enumerate}
		\item  $\pi_1(M) = 0$.
		\item For every positive integer $n$,  the natural map $$\pic^{}(M)_n \;\to\; \pic^{\rm an}(M)_n$$
		is an isomorphism, where $\pic(M)_n$ (respectively, $\pic^{\rm an}(M)_n$) denotes cokernel for the multiplication by $n$ maps.
	\end{enumerate}
	
\end{prop}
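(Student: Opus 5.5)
Part (i) follows from the homotopy long exact sequence, and part (ii) from the exponential sequence combined with \cref{commute-1}; the plan is to show the comparison map $\pic(M)\to\pic^{\rm an}(M)$ becomes an isomorphism modulo $n$ by sandwiching it between two maps whose composite is the isomorphism $\Psi_H$.

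\emph{Part (i).} Use the homotopy long exact sequence of the principal $H$-bundle $q\colon G\to M$:
\[
\pi_1(G)\to \pi_1(M)\to \pi_0(H).
\]
Since $G$ is simply connected we have $\pi_1(G)=0$, and since $H$ is connected we have $\pi_0(H)=0$. Hence $\pi_1(M)=0$.

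\emph{Topology of $M$.} Continuing the same sequence one step further, $\pi_2(G)=0$ (true for any Lie group) and $\pi_1(G)=0$ give $\pi_2(M)\simeq\pi_1(H)$. As $M$ is simply connected, Hurewicz gives $H_1(M)=0$ and $\pi_2(M)\simeq H_2(M)$. The universal coefficient theorem then gives $H^2(M,\ZZ)\simeq\Hom(H_2(M),\ZZ)$, because $\mathrm{Ext}(H_1(M),\ZZ)=0$. Consequently every map entering the definition of $\alpha$ after $c_1$ is an isomorphism, and
\[
H^2(M,\ZZ)\simeq \Hom(\pi_1(H),\ZZ)=H^1(H,\ZZ).
\]
Here I use that $M$, being a smooth variety, has the homotopy type of a CW complex.

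\emph{Part (ii).} First, $\pic(G)=0$ for simply connected semisimple $G$, so \cref{lemma: alg pic} gives $\pic(M)=X(H)$, with $\chi\mapsto L_\chi$. By \cref{commute-1}, the composite
\[
X(H)\to\pic(M)\to\pic^{\rm an}(M)\xrightarrow{\alpha}H^1(H,\ZZ)
\]
is $\Psi_H$, which is an isomorphism by \cref{lemma: natural}. It follows that $\pic(M)\to\pic^{\rm an}(M)$ is split injective and that $\alpha$, equivalently $c_1$, is surjective.

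To control the analytic side, use the exponential sequence. Since $H^1(M,\ZZ)=\Hom(H_1(M),\ZZ)=0$, it gives an exact sequence
\[
0\to H^1(M,\OO^{an}_M)\to \pic^{\rm an}(M)\xrightarrow{c_1} H^2(M,\ZZ)\to H^2(M,\OO^{an}_M),
\]
whose outer terms are $\CC$-vector spaces. \cref{injective-v-v'} then shows that $\pic^{\rm an}(M)_n\to H^2(M,\ZZ)_n$ is injective.

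\emph{Conclusion.} Reduce modulo $n$ the chain
\[
\pic(M)\to\pic^{\rm an}(M)\to H^2(M,\ZZ)\simeq H^1(H,\ZZ).
\]
The composite is an isomorphism, so its reduction $\pic(M)_n\to H^2(M,\ZZ)_n$ is an isomorphism. The second map $\pic^{\rm an}(M)_n\to H^2(M,\ZZ)_n$ is injective by the previous step. It is also surjective, because the composite is. Hence the second map is an isomorphism, and therefore so is the first map $\pic(M)_n\to\pic^{\rm an}(M)_n$.

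\emph{Main obstacle.} The delicate point is correctly identifying $\alpha$ with $c_1$ followed by isomorphisms, that is, ensuring that the Hurewicz and universal-coefficient identifications are genuine isomorphisms and compatible with the connecting map used in \cref{commute-1}. Everything else is formal once $\pi_1(M)=0$ and $\pi_2(M)\simeq\pi_1(H)$ are established.
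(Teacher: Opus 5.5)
Your proposal is correct and follows essentially the same route as the paper. The paper also gets (i) and $\pi_2(M)\simeq\pi_1(H)$ from the homotopy sequence of $G\to M$. For (ii) it reduces the diagram of \cref{commute-1} modulo $n$: $\Psi_H$ and $X(H)\simeq\pic(M)$ being isomorphisms give surjectivity of $\overline{\alpha}$, and \cref{injective-v-v'} applied to the exponential sequence gives injectivity, exactly as you do.
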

 \begin{proof}
	Form the fibration $G \to M=G/H$, we have the homotopy long exact sequence 
	
	\begin{equation}\label{homotopy-1} 
		\pi_2(G)\to\; \pi_2(M)\to\; \pi_1(H) \to\; \pi_1(G) \to\; \pi_1(M) \to\; \pi_0(H).
	\end{equation}
	
	Since $G$ is a Lie group, $\pi_2(G) = 0$. Moreover, since $G$ is simply connected, $\pi_1(G) = 0$; and as $H$ is connected, $\pi_0(H) = 0$.  Hence 
	\[
	\pi_2(M) = \pi_1(H), 
	\quad 
	\pi_1(M) = 0.
	\]
	Thus $H_1(M,\ZZ) = 0$, so $H^1(M,\ZZ) = 0$.
	
	\medskip
	
	By using the exponential sequence 
	\[
	0 \to \ZZ \to \OO_{M}^{an} \stackrel{e^{2\pi i z}}{\longrightarrow} \OO_{M}^{an, \times} \to 1,
	\]
	we obtain the long exact sequence 
	\begin{equation}\label{pic-an-1} 
		H^1(M,\ZZ) \to H^1(M,\OO_{M}^ {an}) \to \pic^{\rm an}(M) \stackrel{c_1}{\longrightarrow} H^2(M,\ZZ) \to H^2(M,\OO_{M}^ {an}).
	\end{equation}
	
	Since $H^1(M,\ZZ) = 0$, by the universal coefficient theorem map $H^2(M,\ZZ) \to \Hom(H_2(M),\ZZ)$ and the Hurewicz map $\Hom(H_2(M),\ZZ) \to \Hom(\pi_2(M),\ZZ)$ are isomorphisms.  
	Therefore 
	\[
	\Hom(\pi_2(M),\ZZ) = \Hom(\pi_1(H),\ZZ) = H^1(H,\ZZ).
	\]
	Therefore the Chern class map $c_1$ in \eqref{pic-an-1} can be identified with 
	\[
	\alpha : \pic^{\rm an}(M) \;\to\; H^1(H,\ZZ),
	\]
	where $\alpha$ is as in Proposition~\ref{commute-1}.
	
	\medskip
	Thus we have an exact sequence 
	\begin{equation}\label{exact-v-v'}  
		0 \to V \to  \pic^{\rm an}(M) \stackrel{\alpha}{\longrightarrow} H^1(H,\ZZ) \to V',
	\end{equation}
	where $V = H^1(M,\OO_{M}^ {an})$ and $V' = H^2(M,\OO_{M}^ {an})$ are complex vector spaces.
	
	\medskip
	
	The commutative diagram in Proposition~\ref{commute-1} gives rise to the following commutative diagram for every $n$:
	\[
	\centerline{
		\xymatrix{
			X(H)_n \ar[r] \ar[d]^{\Psi_H} 
			& \pic^{}(M)_n \ar[d] \\
			H^1(H,\ZZ)_n 
			& \pic^{\rm an}(M)_n \ar[l]_{\overline{\alpha}}
	}}
	\]
	
	The left vertical map is an isomorphism by Lemma~\ref{lemma: natural}, which shows that $\overline{\alpha}$ must be surjective.  
	The upper horizontal map is an isomorphism because $X(H) \to \pic(M)$ is an isomorphism, as $G$ is simply connected.  
	By the exact sequence \eqref{exact-v-v'} and Lemma~\ref{injective-v-v'}, it follows that $\overline{\alpha}$ is injective.  
	Therefore $\overline{\alpha}$ is a bijection, hence the right vertical map is also a bijection.  
	This proves the lemma.
	
\end{proof}

\subsection{When $H\subset G$ is closed connected reductive subgroup}
Let $G$ be a connected, simply connected, semisimple group and let $H\subset G$ be a closed connected reductive subgroup. 

In this subsection, we show that the natural map
\[
\pic(G/H)\to \pic^{\mathrm{an}}(G/H),\qquad
\]
is an isomorphism.

\begin{theorem}\label{picard: alg-ana}
	Let $G$ be a connected, simply connected, semieimple complex algebraic group and $H\subset G$ be a closed connected reductive subgroup. Then the natural map
	\[
	\pic(G/H)\longrightarrow \pic^{\mathrm{an}}(G/H)
	\]
	is an isomorphism.
\end{theorem}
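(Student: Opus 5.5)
Since $H$ is reductive and $G$ is semisimple, the homogeneous space $M=G/H$ is affine (Matsushima's criterion). The associated analytic space is therefore Stein, so $H^i(M,\OO_M^{an})=0$ for all $i\ge 1$. In the exact sequence \eqref{pic-an-1} this makes both $V=H^1(M,\OO^{an}_M)$ and $V'=H^2(M,\OO^{an}_M)$ vanish. The sequence \eqref{exact-v-v'} then collapses to an isomorphism
\[
\alpha:\pic^{\rm an}(M)\xrightarrow{\;\simeq\;} H^1(H,\ZZ).
\]
Here $\alpha$ is the first Chern class composed with the universal coefficient, Hurewicz and connecting-map identifications. These are isomorphisms because $\pi_1(M)=0$ and $\pi_2(M)\simeq\pi_1(H)$, as shown in the proof of \cref{brauer-3}.

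Next, consider the commutative square of \cref{commute-1}. Its top arrow $X(H)\to\pic(M)$, $\chi\mapsto L_\chi$, is an isomorphism by \cref{lemma: alg pic}. To apply that lemma we need $\pic(G)=0$; this holds for a simply connected semisimple $G$, either as a standard fact or by applying \cref{lemma: alg pic} itself with $H$ trivial and using Iversen's computation. The left arrow $\Psi_H:X(H)\to H^1(H,\ZZ)$ is an isomorphism by \cref{lemma: natural}. Commutativity gives
\[
\alpha\circ(\pic(M)\to\pic^{\rm an}(M))\circ(X(H)\xrightarrow{\simeq}\pic(M))=\Psi_H .
\]
Since the first map in this composite, $X(H)\to\pic(M)$, and $\alpha$ are both bijective, and $\Psi_H$ is bijective, the natural map $\pic(M)\to\pic^{\rm an}(M)$ is an isomorphism.

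The main obstacle is the first step: justifying that $M$ is affine, and hence Stein, so that the coherent cohomology vanishes. For reductive $H$ this is Matsushima's theorem, or equivalently the fact that the geometric invariant theory quotient of the affine variety $G$ by the reductive group $H$ is affine. Stein-ness then follows from the standard fact recalled in \S\ref{prelim}. The remaining care is bookkeeping: one must check that the identification of $c_1$ with $\alpha$ in \eqref{pic-an-1} is the same $\alpha$ that appears in \cref{commute-1}. This is exactly how $\alpha$ was defined there, so no new argument is needed.
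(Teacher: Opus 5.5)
Your proof is correct. It uses the same commutative square as the paper: \cref{commute-1}, together with the bijectivity of $X(H)\to\pic(M)$ and of $\Psi_H$. Where it differs is in how you show that the bottom arrow $\alpha$ is bijective.

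The paper simply cites \cref{brauer-3} for this. That proposition, however, only shows that $\overline{\alpha}$ is bijective after passing to cokernels of multiplication by $n$, which cannot detect the divisible group $V=H^1(M,\OO_M^{an})$. Its argument also never uses reductivity of $H$. In fact, since $\alpha\circ(\pic(M)\to\pic^{\rm an}(M))\circ(X(H)\to\pic(M))=\Psi_H$ is bijective and $H^1(M,\ZZ)=0$, one gets $\pic^{\rm an}(M)\simeq\pic(M)\oplus V$. So the theorem is equivalent to $V=0$.

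Your appeal to Matsushima's theorem supplies exactly this input: $M$ is affine, hence Stein, hence $V=0$ by Cartan's Theorem B. This is precisely where reductivity enters, consistent with \cref{remark: picard}, where $V$ is infinite-dimensional for a unipotent stabilizer. So your argument is more complete than the paper's written one. You only need $V=0$: surjectivity of $\alpha$ already follows from the square, so the vanishing of $V'=H^2(M,\OO_M^{an})$ is not needed.

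One small point. Your alternative justification of $\pic(G)=0$, ``by applying \cref{lemma: alg pic} with $H$ trivial'', is circular, since that lemma assumes $\pic(G)=0$. Just cite the standard fact that the coordinate ring of a simply connected semisimple group is factorial; the paper also uses this tacitly in \cref{brauer-3}.
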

\begin{proof}
	By Proposition \ref{commute-1} we have the following commutative diagram.
	\[
	\xymatrix{
		X(H)\ar[r]^{\simeq \hspace{1em}}\ar[d]_{\Psi_H}^{\simeq} & \mathrm{Pic}^{}(G/H)\ar[d] \\
		\Hom(\pi_1(H),\ZZ) & \mathrm{Pic}^{\mathrm{an}}(G/H)\ar[l]^{\hspace{1em} \qquad\simeq\qquad}
	}
	\]
where the horizontal maps are isomorphisms by Lemmas \ref{lemma: alg pic} and \ref{brauer-3}, and $\Psi_H$ is an isomorphism by Lemma \ref{lemma: natural}. Hence the right vertical map is an isomorphism.
\end{proof}

\begin{remark}\label{remark: picard}
	If $H$ is not reductive, then $\mathrm{Pic}^{\mathrm{an}}(G/H)\to \Hom(\pi_1(H),\ZZ)$ need not be an isomorphism. Consequently $\mathrm{Pic}(G/H)\to \mathrm{Pic}^{\mathrm{an}}(G/H)$ may fail to be an isomorphism. For example, take $G=\mathrm{SL}_2$ and let $H$ be the subgroup of invertible strictly upper triangular matrices. Then $G/H\simeq \CC^2\setminus\{0\}$. Here $X(H)=0$, so $\mathrm{Pic}^{}(G/H)=0$, while $\mathrm{Pic}^{\mathrm{an}}(G/H)$ is infinite (cf. \cite[Ex.~4, p.~49]{Griffiths-Harris}). Set $M=G/H$. The long exact sequence arising from the exponential sequence is
	\begin{multline*}
		\cdots \to H^{1}\left(M, \mathbb{Z}\right) 
		\to H^{1}\left(M, \OO_{M}^{an}\right) 
		\to \mathrm{Pic}^{\rm an}\left(M\right) 
		\to H^{2}\left(M, \mathbb{Z}\right) 
		\to \cdots
	\end{multline*}
	
	\noindent Since $M$ is homotopy equivalent to $S^{3}$, both 
	$H^{1}\!\left(M, \mathbb{Z}\right)$ and 
	$H^{2}\!\left(M, \mathbb{Z}\right)$ vanish. 
	Consequently,
	\[
	\mathrm{Pic}^{\rm an}\!\left(M^{}\right) 
	\;\simeq\; H^{1}\!\left(M, \OO_{M}^{an}\right),
	\]
	which is infinite-dimensional (cf. \cite[Ex.~4, p.~49]{Griffiths-Harris}).
	
\end{remark}

\section{Extension of algebraic groups and extension of abelian groups}\label{section6.1}
In this section, we recall from Kumar--Neeb (cf. \cite{Kumar-Neeb}) some properties of the extension groups of connected complex algebraic groups by tori.

Let $H$ be a connected complex algebraic group. Let $H=H_{u}\rtimes H_{\rm red}$ be a Levi decomposition of $H$, where $H_u$ is the unipotent radical of $H$ and $H_{\rm red}$ is a connected reductive subgroup of $H$. Since $H_{u}$ is simply connected, (i.e., $\pi_{1}(H_u)$ is trivial) $\pi_{1}(H)\simeq \pi_{1}(H_{\rm red})$.

Again, $H_{\rm red}=Z\cdot H^{(1)}_{\rm red}$, where $Z$ is the connected component of the center of $H_{\rm red}$, and $H^{(1)}_{\rm red}$ is the commutator subgroup of $H_{\rm red}$. Then there is an algebraic universal cover \[H^{(1)}_{\rm red, sc}\xrightarrow{f} H^{(1)}_{\rm red}.\] Thus we have a short exact sequence
\begin{equation}\label{pi1}
	1\to \pi_1(H^{(1)}_{\rm red})\to H^{(1)}_{\rm red, sc}\xrightarrow{f} H^{(1)}_{\rm red}\to 1,
\end{equation}
where  $H^{(1)}_{\rm red, sc}$ is a connected, simply connected, semisimple algebraic group and fundamental group $\pi_1(H^{(1)}_{\rm red})$ can be identified with a finite central subgroup of $H^{(1)}_{\rm red, sc}$. Consider the covering group homomorphism \[\widetilde{H}_{\rm red}:=Z\times H^{(1)}_{\rm red, sc}\to Z\cdot H^{(1)}_{\rm red}=H_{\rm red}\] sending $(z,h)\mapsto zf(h)$. Let $\Pi_H$ be the kernel of this map. So, we have a short exact sequence \[1\to \Pi_H\to \widetilde{H}_{\rm red}\to H_{\rm red}\to 1.\] 
Note that $\Pi_H=\{(a,b)\in Z\times H^{(1)}_{\mathrm{red,sc}} \;:\; a^{-1}=f(b)\}
\cong \{b\in H^{(1)}_{\mathrm{red,sc}} :\, f(b)\in Z\cap H^{(1)}_{\mathrm{red}}\}=f\inv(Z\cap H^{(1)}_{\mathrm{red}}),$ which is an extension of $Z\cap H^{(1)}_{\mathrm{red}}$ by  $\pi_1(H^{(1)}_{\rm red})$.  
Since $H^{(1)}_{\mathrm{red}}$ is semisimple and $Z$ is central in $H_{\mathrm{red}}$, 
the intersection $Z\cap H^{(1)}_{\mathrm{red}}$ is finite. 
Moreover, the fundamental group 
$\pi_1(H^{(1)}_{\mathrm{red}})$ of the semisimple group 
$H^{(1)}_{\mathrm{red}}$ is finite. Therefore,
$\Pi_H$
is also finite. 
Now we have a covering group homomorphism $q_H:\widetilde{H}:=H_u\rtimes \widetilde{H}_{\rm red}\to H$ with $\Pi_H$ as its kernel. Thus we have a short exact sequence 
\begin{equation}\label{pi2}
	1\to \Pi_H\to \widetilde{H} \to H\to 1,
\end{equation}
where $\widetilde{H}$ is connected and $\Pi_{H}$ is a finite, hence central subgroup of $\widetilde{H}$.

Let $A$ be a torus. Then we have the natural restriction map 
\[ {\rm res} \colon \mathrm{Hom}(\widetilde{H}, A)\longrightarrow \mathrm{Hom}(\Pi_H,A)\] which is in fact  homomorphism of abelian groups.

Given an algebraic group homomorphism $\gamma:\Pi_H\to A$, the pushout of \eqref{pi2} by $\gamma$ gives rise to a central extension
\[
\begin{array}{c@{\hspace{2cm}}c}
	\xymatrix{
		\hspace{1.6cm}1\ar[r] & \Pi_H  \ar[d]_-{{\gamma }} \ar[r]_-{}\ar[r]& \widetilde{H} \ar[r]\ar[d]^{} & H\ar[r]\ar@{=}[d]& 1\\
	\Phi(\gamma):=	1\ar[r]&A\ar[r]& (\widetilde{H}\times A) \big/ \Pi_{H}\ar[r]_-{} &H\ar[r]& 1}
\end{array}
\]
Therefore we have a natural homomorphism of abelian groups 
\[
\Phi \colon \mathrm{Hom}(\Pi_H,A)\longrightarrow \mathrm{E}_{\mathrm{al}}(H,A),
\]
where $\mathrm{E}_{\mathrm{al}}(H,A)$ is an abelian group with respect to the Brauer sum of central extensions. Then we have the following result.
\begin{theorem}\label{Thm: kumar-neeb} Let $A$ be a torus and with notation as above. Then following diagram below commutes 
\[
\xymatrix{
	\mathrm{Hom}(\Pi_H,A) 
	\ar[rr]^{\Phi} 
	\ar[d]_{\mathrm{rest.\,map}}\ar[drr]^{\mathrm{quot. map}} 
	&& \mathrm{E}_{\mathrm{al}}(H,A) \\
	\mathrm{Hom}(\pi_1(H^{(1)}_{\rm red}),A)
	&&  
	\dfrac{\mathrm{Hom}(\Pi_H,A)}{\rm res(\mathrm{Hom}(\widetilde{H},A))} \ar[u]_{\Phi^{'}} \ar[ll]^{\mathrm{isom.}} &
}
\] and 
\begin{enumerate}
	\item  The natural map $\Phi$ is  functorial with respect to both $H$ and $A$.
	\item The natural restriction map \[\mathrm{Hom}(\Pi_H,A)\to \mathrm{Hom}(\pi_1(H^{(1)}_{\rm red}),A) \] factors through an isomorphism \[\dfrac{\mathrm{Hom}(\Pi_H,A)}{\rm res(\mathrm{Hom}(\widetilde{H},A))} \to 	\mathrm{Hom}(\pi_1(H^{(1)}_{\rm red}),A)\].
	\item The induced map $\Phi^{'}$ is an isomorphism.
	\item There is an isomorphism from ${\rm Hom}(\pi_1(H^{(1)}_{\rm red}), A)$ onto $\mathrm{E}_{\mathrm{al}}(H,A)$, which we also denote by $\Phi^{'}$
\end{enumerate}	
\end{theorem}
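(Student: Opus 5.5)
We prove \cref{Thm: kumar-neeb} by reducing everything to the reductive, and then the toral, case. We use the Levi decomposition $\widetilde H=H_u\rtimes\widetilde H_{\rm red}$ together with the fact that, in characteristic zero, central extensions of a unipotent group by a torus split uniquely.

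\textbf{Step 1: functoriality.} Assertion (i) is formal. Given $\gamma:\Pi_H\to A$ and a homomorphism $\beta:A\to A'$ of tori, the pushout $\Phi(\beta\circ\gamma)$ is $\beta_*\Phi(\gamma)$. For a homomorphism $u:H'\to H$ of connected groups, choose compatible Levi factors, so that $u$ lifts to $\widetilde u:\widetilde{H'}\to\widetilde H$ with $\widetilde u(\Pi_{H'})\subset\Pi_H$. The pullback of the pushout is then the pushout of the pullback. Additivity of $\Phi$ holds because the Baer sum of pushouts along $\gamma_1,\gamma_2$ is the pushout along $\gamma_1+\gamma_2$, using the description of the Baer sum via $\Delta_H$ and $\nabla_A$.

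\textbf{Step 2: the isomorphism in (ii).} Recall $\Pi_H\subset Z\times H^{(1)}_{\rm red,sc}$, and $\pi_1(H^{(1)}_{\rm red})=\Pi_H\cap(1\times H^{(1)}_{\rm red,sc})$. Since $H_u$ and $H^{(1)}_{\rm red,sc}$ have no characters, restriction gives ${\rm Hom}(\widetilde H,A)={\rm Hom}(Z,A)$. Hence the kernel of ${\rm Hom}(\Pi_H,A)\to{\rm Hom}(\pi_1(H^{(1)}_{\rm red}),A)$ consists of the maps that factor through the projection $\Pi_H\to Z$. Any such map extends to a homomorphism $Z\to A$: the map ${\rm Hom}(Z,A)\to{\rm Hom}(F,A)$ is surjective for a finite subgroup $F\subset Z$, because $X(Z)\to X(F)$ is surjective and $A$ is a product of copies of $\gm$. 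Conversely, homomorphisms coming from $Z$ restrict trivially to $\pi_1(H^{(1)}_{\rm red})$. Surjectivity of the restriction map follows in the same way, since ${\rm Hom}(-,A)$ is exact on finite diagonalizable groups.

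\textbf{Step 3: $\Phi'$ is well defined and injective.} If $\gamma=\delta|_{\Pi_H}$ with $\delta:\widetilde H\to A$, then $h\mapsto[(h,\delta(h)^{-1})]$ gives a section of $(\widetilde H\times A)/\Pi_H\to H$, so $\Phi(\gamma)$ splits and $\Phi'$ is well defined. Conversely, suppose $\Phi(\gamma)$ splits by $\sigma:H\to (\widetilde H\times A)/\Pi_H$. Then $\widetilde h\mapsto [(\widetilde h,1)]\cdot\sigma(q_H(\widetilde h))^{-1}$ takes values in $A$ and is a homomorphism $\widetilde H\to A$, because $A$ is central. Its restriction to $\Pi_H$ is $\gamma$, up to a sign convention. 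So $\ker\Phi={\rm res}({\rm Hom}(\widetilde H,A))$.

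\textbf{Step 4: surjectivity of $\Phi$, the main obstacle.} Let $1\to A\to E\to H\to1$ be a central extension. Pull it back along $q_H$ to obtain $E_1\to\widetilde H$, and aim to show this pullback splits. Over $H_u$ the extension splits uniquely: the preimage of $H_u$ contains its unipotent radical, which maps isomorphically onto $H_u$. By uniqueness this splitting is normalized by $E_1$. Quotienting by it reduces us to a central extension of $\widetilde H_{\rm red}=Z\times H^{(1)}_{\rm red,sc}$ by $A$. Over the simply connected semisimple factor the extension splits: the derived group of the preimage is semisimple and maps onto $H^{(1)}_{\rm red,sc}$ with finite central kernel, hence isomorphically by simple connectivity. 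Over $Z$ the preimage is a commutative extension of tori, which splits by \cref{lemma: split}. These two sections commute modulo the central subgroup $A$, and the commutator pairing $Z\times H^{(1)}_{\rm red,sc}\to A$ is bimultiplicative from a connected group to a finite-free setting, hence trivial. So we get a section $s:\widetilde H\to E_1$, and $E$ is the quotient of $A\times\widetilde H$ by the image of $\Pi_H$ under $\pi\mapsto(s(\pi)^{-1},\pi)$, where $s|_{\Pi_H}$ lands in $A$. This is exactly $\Phi(\gamma)$ for $\gamma=s|_{\Pi_H}$ (or its inverse). The delicate points are the normality of the unipotent splitting and the vanishing of the commutator pairing; I would check both carefully.

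\textbf{Step 5: assembly.} Commutativity of the diagram is then by construction. Assertion (iii) follows from Steps 3 and 4, and (iv) is obtained by composing $\Phi'$ with the inverse of the isomorphism in (ii).
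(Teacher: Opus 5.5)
Your Steps 2 through 5 are sound, but Step 1's argument for functoriality in $H$ rests on a false claim. A homomorphism $u\colon H'\to H$ need not lift to $\widetilde u\colon\widetilde{H'}\to\widetilde H$, however the Levi factors are chosen. Even $u(H'_u)\subset H_u$ can fail, e.g.\ for $\mathbb{G}_a\hookrightarrow{\rm SL}_2$, although there a lift still exists. The real obstruction is the central torus. Take $u\colon\gm\to{\rm PGL}_2$, $t\mapsto[\mathrm{diag}(t,1)]$, so that $\widetilde{H'}=\gm$, $\Pi_{H'}=1$, $\widetilde H={\rm SL}_2$ and $\Pi_H=\mu_2$. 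A lift would land in the preimage of the diagonal torus of ${\rm PGL}_2$, which is the diagonal torus of ${\rm SL}_2$. Since $\mathrm{diag}(s,s^{-1})\mapsto[\mathrm{diag}(s^2,1)]$, you would need a character $s$ of $\gm$ with $s(t)^2=t$, and none exists. So there is in general no induced map ${\rm Hom}(\Pi_H,A)\to{\rm Hom}(\Pi_{H'},A)$, and your pushout/pullback argument only covers those $u$ that happen to lift. This is partly a defect in how (i) is phrased, but as written your Step 1 asserts something false.

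The clean repair is to prove naturality for the isomorphism in (iv), whose source is genuinely functorial: $u([H',H'])\subset[H,H]$, and $\pi_1([H,H])=\pi_1(H^{(1)}_{\rm red})$. Describe the inverse of (iv) intrinsically, as follows. Pull $E$ back to the universal cover $([H,H]\cap H_u)\rtimes H^{(1)}_{\rm red,sc}$ of $[H,H]$. There it splits uniquely, because that group has no nontrivial homomorphisms to $A$. Then restrict the splitting to $\pi_1([H,H])$. Testing this on $\Phi(\gamma)$ with the tautological section $\widetilde h\mapsto[(\widetilde h,1)]$ over $\widetilde H$ shows it inverts (iv). Naturality then follows because $u$ lifts to the universal covers of the derived groups, and unique splittings pull back to unique splittings.

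Apart from this, your route is a genuine alternative to the paper, which proves nothing here and simply cites Kumar--Neeb, Proposition~1.4. You compute ${\rm Hom}(\widetilde H,A)={\rm Hom}(Z,A)$ to get (ii), use the standard splitting criterion to identify $\ker\Phi$, and obtain surjectivity by showing that every central extension of $\widetilde H$ by a torus splits. The two points you flagged are fine. The unipotent splitting is the unipotent radical of the preimage of $H_u$, hence characteristic. For fixed $z\in Z$, commutation with the lift of $z$ is a character of the semisimple group $H^{(1)}_{\rm red,sc}$, hence trivial. You should add one line explaining why the preimage of $Z$ is a torus: it is connected, and its unipotent part injects into $Z$.
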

\begin{proof}
See \cite[Proposition 1.4]{Kumar-Neeb}.
\end{proof}

Note that $\pi_{1}(H^{(1)})=\pi_{1}(H^{(1)}_{\rm red})$ is a finite group. Thus, \cref{Thm: kumar-neeb}(iv) gives rise
to an isomorphism (cf. \cite[Theorem 1.8(b)]{Kumar-Neeb})
\[\Phi^{'}: \mathrm{Hom}(\pi_1(H^{(1)}),A)\to 	\mathrm{E}_{\rm al}(H, A).\]
Consider the short exact sequence \[1\to H^{(1)}\to H\to H/H^{(1)}\to 1.\]  The long exact sequence of homotopy sequences gives a short exact sequence $$0\to \pi_1(H^{(1)})\to \pi_1(H)\to \pi_1(H/H^{(1)})\to 0.$$ As $H/H^{(1)}$ is a torus, its fundamental group is free abelian of finite rank. Thus, ${\rm Ext}^1(\pi_1(H),\ZZ)\simeq{\rm  Ext}^1(\pi_1(H^{(1)}),\ZZ)$.

Let $\Gamma$ be a finite abelian group and let $\chi\in X(\Gamma):={\rm Hom}(\Gamma, \gm)$. The pullback of the exponential sequence
\begin{equation}\label{exp}
	0\to \ZZ\to \CC\to \CC^\times \to 1
\end{equation}
by $\chi$ defines an abelian extension of $\Gamma$ by $\ZZ$, hence a class in $\mathrm{Ext}^1(\Gamma,\ZZ)$. This process defines a group homomorphism
\[
e_\Gamma\colon X(\Gamma)\to \mathrm{Ext}^1(\Gamma,\ZZ).
\]

\begin{lemma}
	The map $e_\Gamma$ is an isomorphism.
\end{lemma}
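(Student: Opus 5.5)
Apply ${\rm Hom}(\Gamma,-)$ to the exponential sequence \eqref{exp}, regarded as a short exact sequence of abelian groups $0\to\ZZ\to\CC\to\CC^\times\to 1$. This gives the long exact sequence
\[
{\rm Hom}(\Gamma,\CC)\to {\rm Hom}(\Gamma,\CC^\times)\xrightarrow{\;\delta\;}{\rm Ext}^1(\Gamma,\ZZ)\to {\rm Ext}^1(\Gamma,\CC).
\]
The plan is to show three things: that the two outer terms vanish, and that $\delta$ is exactly $e_\Gamma$. Since $X(\Gamma)={\rm Hom}(\Gamma,\gm)={\rm Hom}(\Gamma,\CC^\times)$ (a finite group is a finite constant group scheme, so algebraic and abstract characters agree), this proves the lemma.

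First, ${\rm Hom}(\Gamma,\CC)=0$, because $\Gamma$ is torsion and $\CC$ is torsion-free. Second, ${\rm Ext}^1(\Gamma,\CC)=0$, because $\CC$ is divisible and hence injective as a $\ZZ$-module. Thus $\delta$ is an isomorphism $X(\Gamma)\simeq{\rm Ext}^1(\Gamma,\ZZ)$.

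It remains to identify $\delta$ with $e_\Gamma$. This is the standard description of the connecting homomorphism in the Yoneda picture of ${\rm Ext}^1$: for $\chi\colon\Gamma\to\CC^\times$, the class $\delta(\chi)$ is the pullback along $\chi$ of the extension $0\to\ZZ\to\CC\to\CC^\times\to1$. Concretely, it is the extension
\[
0\to\ZZ\to \CC\times_{\CC^\times}\Gamma\to\Gamma\to 0,
\]
which is precisely the extension defining $e_\Gamma(\chi)$. This also shows that $e_\Gamma$ is a homomorphism, since $\delta$ is one; equivalently, pullback is compatible with the Baer sum, because the Baer sum of pullbacks along $\chi_1$ and $\chi_2$ is the pullback along $\chi_1\chi_2$.

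The main point needing care is this compatibility between the connecting map of the long exact ${\rm Ext}$ sequence and the pullback description of extensions, including the sign convention. A sign discrepancy would only change $e_\Gamma$ by $-1$, which does not affect bijectivity.

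As a cross-check that avoids homological algebra, one can argue directly. Both groups have order $|\Gamma|$: we have $X(\Gamma)\simeq\Gamma$ non-canonically, and ${\rm Ext}^1(\ZZ/m,\ZZ)\simeq\ZZ/m$. So it suffices to prove injectivity. Suppose $e_\Gamma(\chi)$ splits. A splitting is a homomorphism $s\colon\Gamma\to\CC$ lifting $\chi$ through $\exp(2\pi i\,\cdot)$. Any such $s$ is zero, since $\Gamma$ is torsion and $\CC$ is torsion-free, and therefore $\chi=1$.
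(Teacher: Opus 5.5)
Your proposal is correct and follows the paper's proof: apply ${\rm Hom}(\Gamma,-)$ to the exponential sequence, identify $e_\Gamma$ with the connecting map $\delta$, and note that the outer terms ${\rm Hom}(\Gamma,\CC)$ and ${\rm Ext}^1(\Gamma,\CC)$ vanish. Your reason for ${\rm Ext}^1(\Gamma,\CC)=0$ (the divisible group $\CC$ is an injective $\ZZ$-module) is more precise than the paper's appeal to finiteness of $\Gamma$, and your counting-plus-injectivity cross-check is also a valid argument.
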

\begin{proof}
	There is an equivalent, homological, description of $e_\Gamma(\chi)$. Applying $\Hom(\Gamma,-)$ to \eqref{exp} yields the long exact sequence
	\[
	\cdots\to \Hom(\Gamma,\CC)\to \Hom(\Gamma,\gm)\xrightarrow{\delta} \mathrm{Ext}^1(\Gamma,\ZZ)\to \mathrm{Ext}^1(\Gamma,\CC)\to\cdots.
	\]
	By definition $e_\Gamma(\chi)=\delta(\chi)$. Since $\Gamma$ is finite, $\Hom(\Gamma,\CC)=0$ and $\mathrm{Ext}^1(\Gamma,\CC)=0$, so the connecting map $\delta$ is an isomorphism.
\end{proof}

Take $\Gamma=\pi_1(H^{(1)})$. Then the above discussion implies, the composition of the two maps $${\rm Hom}(\pi_1(H^{(1)}), \gm)\stackrel{e_{\Gamma}}\to {\rm Ext}^{1}(\pi_1(H^{(1)}),\ZZ)\xrightarrow{\simeq}{\rm Ext}^{1}(\pi_1(H),\ZZ)$$ is an isomorphism.  
Thus, from the above discussion we have the following lemma.
\begin{lemma}\label{cor:6.7}
	Let $H$ be a connected algebraic group. Then there is a natural isomorphism
	\[
\Phi_{H}: {\rm Ext}^{1}(\pi_1(H),\ZZ) \xrightarrow{\simeq^{-1}} {\rm Ext}^{1}(\pi_1(H^{(1)}),\ZZ) \stackrel{e_{\Gamma}^{-1}}\to {\rm Hom}(\pi_1(H^{(1)}), \gm) \stackrel{{\Phi^{'}}}\to \mathrm{E}_{\mathrm{al}}(H,\gm)
	\]
which is functorial with respect to $H$. 
\end{lemma}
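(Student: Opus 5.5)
The isomorphism $\Phi_H$ is defined as a composition of three maps. The plan is to check that each one is an isomorphism and that each one is natural in $H$; the composite then inherits both properties. Fix a homomorphism $f\colon H\to H'$ of connected algebraic groups. It sends $H^{(1)}$ into $H'^{(1)}$, so it induces maps $\pi_1(H)\to\pi_1(H')$ and $\pi_1(H^{(1)})\to \pi_1(H'^{(1)})$. These are compatible with the inclusions $\pi_1(H^{(1)})\hookrightarrow \pi_1(H)$ and $\pi_1(H'^{(1)})\hookrightarrow \pi_1(H')$, because the short exact sequences $1\to H^{(1)}\to H\to H/H^{(1)}\to 1$ and its analogue for $H'$ are connected by the maps induced by $f$, and \cref{lemma: homotopy-pair} gives a commutative ladder of homotopy sequences.

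First map. The sequence $0\to\pi_1(H^{(1)})\to\pi_1(H)\to\pi_1(H/H^{(1)})\to 0$ has free abelian quotient of finite rank. Hence $\mathrm{Ext}^1(\pi_1(H/H^{(1)}),\ZZ)=0$. Moreover $\Hom(\pi_1(H),\ZZ)\to\Hom(\pi_1(H^{(1)}),\ZZ)$ is surjective, since the target is $0$ because $\pi_1(H^{(1)})$ is finite. The long exact $\mathrm{Ext}$ sequence then shows that restriction $\mathrm{Ext}^1(\pi_1(H),\ZZ)\to \mathrm{Ext}^1(\pi_1(H^{(1)}),\ZZ)$ is an isomorphism. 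It is natural because $\mathrm{Ext}^1(-,\ZZ)$ is a contravariant functor and the inclusions of fundamental groups commute with the maps induced by $f$.

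Second map. The inverse $e_\Gamma^{-1}$ is an isomorphism by the preceding lemma. It is natural in $\Gamma$ because $e_\Gamma$ is the connecting map $\delta$ of the long exact sequence obtained by applying $\Hom(\Gamma,-)$ to the exponential sequence \eqref{exp}, and connecting maps are natural in $\Gamma$. Concretely, pulling back the extension $0\to\ZZ\to\CC\to\CC^\times\to1$ along $\chi\circ f_*$ is the same as first pulling back along $\chi$ and then along $f_*$.

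Third map. The map $\Phi'$ is an isomorphism by \cref{Thm: kumar-neeb}(iv) together with the remark identifying $\pi_1(H^{(1)})$ with $\pi_1(H^{(1)}_{\rm red})$. Its functoriality comes from \cref{Thm: kumar-neeb}(i), and this is the step I expect to be the main obstacle. The point to verify is that a homomorphism $f\colon H\to H'$ lifts to a homomorphism $\widetilde f\colon\widetilde H\to\widetilde{H'}$ carrying $\Pi_H$ into $\Pi_{H'}$ and compatible with $\pi_1(H^{(1)}_{\rm red})\to \pi_1(H'^{(1)}_{\rm red})$. I would build the lift from three facts: Levi subgroups can be chosen so that $f(H_{\rm red})$ lies in a Levi subgroup of $H'$, since Levi subgroups are conjugate; the simply connected covers lift uniquely; and $Z$ maps into the center of $H'_{\rm red}$ only modulo $H'^{(1)}_{\rm red}$. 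If the lift is awkward to construct, I would fall back on the intrinsic description of $\Phi'$: pull back to $H^{(1)}$ and push out the universal covering $1\to\pi_1(H^{(1)})\to H^{(1)}_{sc}\to H^{(1)}\to1$ by a character. That description is visibly natural, since a homomorphism $H^{(1)}\to H'^{(1)}$ lifts uniquely to the simply connected covers. With that in hand, the pullback of a pushed-out extension along $f$ equals the pushout along the induced character. Composing the three natural isomorphisms gives the natural isomorphism $\Phi_H$.
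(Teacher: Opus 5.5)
Your proposal is correct and follows the paper's argument: $\Phi_H$ is the composite of the restriction isomorphism on ${\rm Ext}^1(-,\ZZ)$ (using that $\pi_1(H/H^{(1)})$ is free), then $e_\Gamma^{-1}$, then the Kumar--Neeb isomorphism $\Phi'$, and the naturality of $\Phi'$ is taken from \cref{Thm: kumar-neeb}(i), exactly as the paper does. If you check that naturality by hand, note that the lift $\widetilde f\colon \widetilde H\to\widetilde{H'}$ can genuinely fail to exist: $t\mapsto[\mathrm{diag}(t,1)]$ from $\gm$ to ${\rm PGL}_2$ does not lift to ${\rm SL}_2$. So use your fallback through $H^{(1)}$, but supplement it with the fact that a class in ${\rm E}_{\rm al}(H,\gm)$ is determined by its restriction to $H^{(1)}$; this follows from the bijectivity of $\Phi'$, because $\Phi'(\chi)|_{H^{(1)}}$ is the pushout of $H^{(1)}_{sc}\to H^{(1)}$ by $\chi$, and $\chi$ is recovered from it by restricting the unique lift of $H^{(1)}_{sc}$ to $\pi_1(H^{(1)})$.
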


\subsection{Algebraic and Analytic Brauer groups of a Homogeneous Space}

Let $X$ be a smooth quasi projective complex algebraic variety. Note that since $X$ is smooth, we have $\mathrm{Br}'(X)=H^{2}_{\topet}(X,\gm)$ (cf. \cite[IV.2.6]{Milne}). Since $X$ is a quasi projective complex algebraic variety, by a result of Gabber the natural inclusion from ${\rm Br}(X)$ into ${\rm Br}'(X)$ is an isomorphism (cf. \cite[Theorem 1.1]{Jong2005ARO}).

 \begin{lemma}\label{brauer-1} Let $X$ be a smooth quasi projective complex algebraic variety. Then we have the following
 	\begin{enumerate}
 		\item  For every $n$, $$\pic^{}(X)_n\to \pic^{\rm an}(X)_n$$ is injective, where $\pic(X)_n$ (respectively, $\pic^{\rm an}(X)_n$) denotes cokernel for the multiplication by $n$ maps.
 		
 		\item  Moreover, if for every $n$, $\pic^{}(X)_n\to \pic^{\rm an}(X)_n$ is surjective, then $${\rm Br}'(X)\to {\rm Br}'^{\rm an}(X)$$ is an isomorphism.
 	\end{enumerate} 
	\end{lemma}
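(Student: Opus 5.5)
The plan is to compare the Kummer sequence in the étale topology with its analytic counterpart, and then pass to torsion. For each $n\ge 1$ the étale Kummer sequence $1\to\mu_n\to\gm\xrightarrow{n}\gm\to 1$ and the analytic sequence $1\to\mu_n\to\OO_X^{an,\times}\xrightarrow{n}\OO_X^{an,\times}\to 1$ are exact; the latter is exact because $n$-th roots exist locally for nowhere-vanishing holomorphic functions. The comparison morphism from the étale site to the analytic site is compatible with both sequences. This gives a commutative ladder of short exact sequences
\[
\xymatrix{
0\ar[r] & \pic(X)_n\ar[r]\ar[d] & H^2\topet(X,\mu_n)\ar[r]\ar[d] & H^2\topet(X,\gm)[n]\ar[r]\ar[d] & 0\\
0\ar[r] & \pic^{\rm an}(X)_n\ar[r] & H^2(X,\mu_n)\ar[r] & H^2(X,\OO_X^{an,\times})[n]\ar[r] & 0
}
\]
where $[n]$ denotes $n$-torsion. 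By Artin's comparison theorem \cite[Theorem 3.12]{Milne}, the middle vertical map $H^2\topet(X,\mu_n)\to H^2_{\rm sing}(X,\ZZ/n\ZZ)$ is an isomorphism.

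For part (i), the left square commutes and its composite through the middle is injective: $\pic(X)_n\hookrightarrow H^2\topet(X,\mu_n)\simeq H^2(X,\mu_n)$. Hence the left vertical map $\pic(X)_n\to\pic^{\rm an}(X)_n$ is injective.

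For part (ii), suppose the left vertical map is also surjective for every $n$, hence an isomorphism. The five lemma, applied to the ladder with zeros on the right, then shows that the right vertical map $H^2\topet(X,\gm)[n]\to H^2(X,\OO_X^{an,\times})[n]$ is an isomorphism. Since $X$ is smooth, ${\rm Br}'(X)=H^2\topet(X,\gm)$ is torsion, so ${\rm Br}'(X)=\bigcup_n H^2\topet(X,\gm)[n]$. Likewise ${\rm Br}'^{\rm an}(X)=\bigcup_n H^2(X,\OO_X^{an,\times})[n]$ by its definition as the torsion subgroup. The maps on $n$-torsion are compatible with inclusions as $n$ varies multiplicatively, so taking the union over $n$ shows that ${\rm Br}'(X)\to{\rm Br}'^{\rm an}(X)$ is an isomorphism. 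This is consistent with the surjectivity already recalled from \cite[Proposition~1.3]{Schroer05}.

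The main point needing care is the commutativity of the ladder, that is, that the comparison map respects the Kummer connecting homomorphisms. Concretely this requires that $\epsilon^*$ from the étale site to the analytic site (a morphism of sites) sends $\gm$ to $\OO^{an,\times}$ and $\mu_n$ to the constant sheaf $\mu_n$, compatibly with the $n$-th power maps. This is standard, so I expect the argument to be essentially formal once the Kummer sequences and Artin comparison are in place. Smoothness is used only to identify ${\rm Br}'(X)$ with all of $H^2\topet(X,\gm)$; quasi-projectivity is not needed for this lemma itself.
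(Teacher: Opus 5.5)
Your proposal is correct and follows essentially the same route as the paper. Both compare the étale and analytic Kummer sequences via Artin's comparison isomorphism in the middle, which gives injectivity on the left; then the five lemma, together with the fact that both Brauer groups are unions of their $n$-torsion parts, gives (ii). Your extra remarks on exactness of the analytic Kummer sequence, compatibility of the comparison morphism with the connecting maps, and quasi-projectivity not being needed only make explicit what the paper leaves implicit.
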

\begin{proof} 
For a fixed integer $n$, the Kummer exact sequence gives rise to a commutative diagram in which each row is exact:
\[
\centerline{
	\xymatrix{
		0 \ar[r] & \pic(X)_n \ar[r] \ar[d] 
		& H^2_{\topet}(X,\mu_n) \ar[d]^{\simeq} \ar[r] 
		& {}_n H^2_{\topet}(X,\gm) \ar[d] \ar[r] & 0 \\
		0 \ar[r] & \pic^{\rm an}(X)_n \ar[r] 
		& H^2(X,\mu_n) \ar[r] 
		& {}_n H^2(X,\OO_{X}^{an, \times}) \ar[r] & 0
}}
\]
where $_{n}H^2\topet(X, \gm)$ and ${\rm Pic}(X)_{n}$ are the kernel and cokernel for multiplication-by-$n$ map.

\noindent By \cite[Theorem 3.12]{Milne} the middle vertical map is an isomorphism, so (i) follows immediately. For (ii), assume the left vertical map is surjective.  The contention follows from the five lemma and the fact that both the Brauer groups involved are torsion groups.
\end{proof}

\begin{theorem} \label{brauer-4} Let $G$ be a connected, simply connected,  semisimple algebraic group and let $H$ be a closed connected subgroup of $G$. Let $M = G/H$. Then the natural map
	\[
	{\rm Br}'(M) \;\longrightarrow\; {\rm Br}'^{\rm an}(M)
	\]
	is an isomorphism, 	where ${\rm Br}'(M)$ (respectively, ${\rm Br}'^{\rm an}(M)$) denotes the cohomological Brauer group of $M$.
\end{theorem}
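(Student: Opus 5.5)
The plan is to combine \cref{brauer-1} with \cref{brauer-3}. First we check that $M=G/H$ satisfies the hypotheses of \cref{brauer-1}. It is a homogeneous space of a linear algebraic group under a closed subgroup, so it is smooth and quasi-projective, as recalled in the introduction. Hence ${\rm Br}'(M)=H^2_{\topet}(M,\gm)$, and the Kummer-sequence diagram from the proof of \cref{brauer-1} is available for every $n$.

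Next, \cref{brauer-3}(ii) asserts that for each positive integer $n$ the map $\pic(M)_n\to\pic^{\rm an}(M)_n$ is an isomorphism, and in particular surjective. That is precisely the hypothesis of \cref{brauer-1}(ii). So for every $n$ the induced map on $n$-torsion
\[
{}_nH^2_{\topet}(M,\gm)\longrightarrow {}_nH^2(M,\OO_M^{an,\times})
\]
is an isomorphism. This follows from the five lemma applied to the diagram with exact rows
\[
0\to \pic(M)_n\to H^2_{\topet}(M,\mu_n)\to {}_nH^2_{\topet}(M,\gm)\to 0
\]
and its analytic counterpart, where the middle comparison map is an isomorphism by \cite[Theorem 3.12]{Milne}.

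Finally, both ${\rm Br}'(M)$ and ${\rm Br}'^{\rm an}(M)$ are torsion groups, so each is the union of its $n$-torsion subgroups. The comparison maps on $n$-torsion are compatible with the inclusions ${}_n(-)\subset {}_{nm}(-)$, since they all come from a single map of sheaves $\gm\to\OO^{an,\times}$. Passing to the direct limit over $n$ then gives that ${\rm Br}'(M)\to{\rm Br}'^{\rm an}(M)$ is an isomorphism.

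I expect no serious obstacle, because the real input, the surjectivity on cotorsion of the Picard map, was already established in \cref{brauer-3}. That in turn rested on $\pi_1(M)=0$, $\pi_2(M)\simeq\pi_1(H)$, and the identification of $c_1$ with $\alpha$ via \cref{commute-1}. The only point needing care is compatibility: the Kummer diagram must commute, with the algebraic-to-analytic comparison maps on both ends matching the maps considered in \cref{brauer-3}. This holds because all these maps are induced by the morphism of sites from the analytic site to the étale site together with $\gm\mapsto\OO^{an,\times}$.
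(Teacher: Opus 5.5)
Your proposal is correct and follows the paper's own argument. The paper proves this theorem simply by citing Proposition~\ref{brauer-3}(ii) for the surjectivity of $\pic(M)_n\to\pic^{\rm an}(M)_n$ and then applying Lemma~\ref{brauer-1}(ii), whose proof is the Kummer-diagram five-lemma argument followed by passage to torsion subgroups, which you spelled out in more detail.
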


\begin{proof}
	Follows from Lemma \ref{brauer-3} and Lemma \ref{brauer-1}.
\end{proof}

\section{Main Theorem:  Brauer group of homogeneous space}\label{Sec7}
In this section, we compute the analytic and algebraic Brauer groups of the homogeneous space for the action of a connected, simply connected, semisimple complex algebraic group with stabilizer connected algebraic group.

Let $X$ be a smooth quasi-projective variety. Then the exponential sequence
$$
0\to \ZZ \to \OO_{X}^{an} \to \OO^{an, \times}_{X}\to 1
$$
gives rise to a long exact sequence 
\[\pic^{\rm an}(X)\stackrel{c_1}\to H^2(X,\ZZ)\to H^2(X,\OO_{X}^{an})\to H^2(X,\OO_{X}^{an, \times})\xrightarrow{\delta} H^3(X,\ZZ)\to H^3(X,\OO_{X}^{an}),\] where $c_1$ denotes the first Chern class map.
Recall that ${\rm Br'}^{\rm an}(X)$ is defined as the torsion subgroup ${\rm Tors}(H^2(X,\OO_{X}^{an, \times}))$ of $H^2(X,\OO_{X}^{an, \times})$.
\begin{lem}\label{lemma 7.1}
Let $X$ be a smooth quasi-projective variety. If the first Chern class map $c_1$ is surjective, then the induced map \[\overline{\delta}: {\rm Br'}^{\rm an}(X)\to {\rm Tors}(H^3(X,\ZZ))\] is an isomorphism.
\end{lem}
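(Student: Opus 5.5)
If $c_1$ is surjective, exactness of the long exact sequence shows that $H^2(X,\ZZ)\to H^2(X,\OO_X^{an})$ is the zero map. The sequence then truncates to
\[
0\to H^2(X,\OO_X^{an})\to H^2(X,\OO_X^{an,\times})\xrightarrow{\delta} H^3(X,\ZZ)\to H^3(X,\OO_X^{an}),
\]
where $V=H^2(X,\OO_X^{an})$ and $V'=H^3(X,\OO_X^{an})$ are complex vector spaces. I will use only that $V$ and $V'$ are $\QQ$-vector spaces: torsion-free and divisible. Since $\delta$ is a homomorphism, it carries torsion to torsion, so $\overline{\delta}$ is well defined as the restriction of $\delta$ to ${\rm Br'}^{\rm an}(X)={\rm Tors}(H^2(X,\OO_X^{an,\times}))$.

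\emph{Injectivity.} Let $x$ be torsion with $\delta(x)=0$. Then $x$ is the image of some $v\in V$. If $nx=0$, then $nv$ maps to $0$, so $nv=0$ because $V\to H^2(X,\OO_X^{an,\times})$ is injective. Since $V$ is torsion-free, $v=0$, hence $x=0$.

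\emph{Surjectivity.} Let $y\in H^3(X,\ZZ)$ with $ny=0$. Its image in $V'$ is killed by $n$, so it is $0$ because $V'$ is torsion-free. Hence $y=\delta(x)$ for some $x\in H^2(X,\OO_X^{an,\times})$. Then $\delta(nx)=0$, so $nx=\iota(v)$ for some $v\in V$, where $\iota$ denotes the inclusion $V\hookrightarrow H^2(X,\OO_X^{an,\times})$. Put $x'=x-\iota(v/n)$, using divisibility of $V$. Then $nx'=0$ and $\delta(x')=\delta(x)=y$, since $\delta\circ\iota=0$. So $y$ lies in the image of $\overline{\delta}$.

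This is the same kind of diagram chase as in \cref{injective-v-v'}. The only real issue is to check that the torsion element $x'$ can be chosen. That relies on divisibility of $V$, and the surjectivity step uses it.
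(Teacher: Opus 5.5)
Your proof is correct. The injectivity half is the same as the paper's, but the surjectivity half differs at the decisive step, and your version is the sound one.

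After choosing $\alpha$ with $\delta(\alpha)=\beta$ and noting $m\alpha\in\iota(V)$, the paper tries to show that this particular lift $\alpha$ is itself torsion. It does so by asserting that $0\neq m\alpha\in\iota(V)$ forces $\alpha\in\iota(V)$. That assertion fails whenever $V=H^2(X,\OO_X^{an})\neq 0$ and ${\rm Br'}^{\rm an}(X)\neq 0$. To see this, take a torsion $t$ with $\delta(t)\neq 0$, an $m$ with $mt=0$, and $0\neq w\in V$. Then $\alpha=t+\iota(w)$ satisfies $m\alpha=\iota(mw)\neq 0$. Yet $\delta(\alpha)=\delta(t)\neq 0$, so $\alpha\notin\iota(V)$, and $\alpha$ is not torsion.

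The point is that a lift of $\beta$ is only determined modulo $\iota(V)$. So one must choose a torsion lift rather than prove that an arbitrary lift is torsion. Your replacement $x'=x-\iota(v/n)$ does exactly this, using divisibility of $V$ as in Lemma~\ref{injective-v-v'}. The gap in the paper's argument is harmless when $V=0$ (e.g.\ $X$ affine, hence Stein), but not in the generality stated.

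Your remark is also accurate that only the torsion-free, divisible (i.e.\ $\mathbb{Q}$-vector space) structure of $V$ and $V'$ is used. The smoothness and quasi-projectivity hypotheses play no role in the argument.
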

\begin{proof}  
First we see that the induced map $\overline{\delta}$ is injective. 
Since $c_1$ is surjective, $H^2(X, \ZZ)\to H^2(X, \OO_{X}^{an})$ is zero map.  Since $H^2(X, \OO_{X}^{an})$ is a vector space, it is torsion free. Moreover, since ${\rm Br'}^{\rm an}(X)$ is a torsion group, it follows that $\overline{\delta}$ is injective.

Next we show that $\overline{\delta}$ is surjective.
Note that ${\rm Im}(\delta)=\ker\big(H^3(X, \ZZ)\to H^3(X, \OO_{X}^{an})\big )$. Since $H^3(X, \OO_{X}^{an})$ is a vector space, it is torsion free. Hence, ${\rm Tors}(H^3(X,\ZZ))\subseteq {\rm Im}(\delta)$.
Assume that $0\neq \beta \in {\rm Tors}(H^3(X,\ZZ))$. Then there exists $0\neq \alpha\in H^2(X,\OO_{X}^{an, \times})$ such that $\delta (\alpha)=\beta$. Since $\beta$ is torsion, there exists a positive integer $m$ such that $m\beta=0$. This implies that $\delta(m\alpha)=0$. Hence, $m\alpha\in {\rm Im}(H^2(X,\OO_{X}^{an})\to H^2(X,\OO_{X}^{an, \times}))$. Since $H^2(X,\OO_{X}^{an})$ is a vector space, if $0\neq m\alpha\in {\rm Im}(H^2(X,\OO_{X}^{an})\to H^2(X,\OO_{X}^{an, \times})),$ then $\alpha\in {\rm Im}(H^2(X,\OO_{X}^{an})\to H^2(X,\OO_{X}^{an, \times}))$. This shows that $\beta= 0$, which is a contradiction. Thus we have $m\alpha=0$. Hence, $\alpha \in {\rm Br'}^{\rm an}(X)$. Therefore, $\overline{\delta}$ is surjective.

\end{proof}

\begin{prop}\label{Analytical-Brauer}
Let $G$ be a connected, simply connected, semisimple complex algebraic group, and $H$ be a closed connected subgroup of $G$. Consider the homogeneous space $X=G/H$. Then the analytic cohomological Brauer group ${\rm Br'}^{\rm an}(X)$ is isomorphic ${\rm Ext}^{1}(\pi_{1}(H), \ZZ)$. In particular, ${\rm Br'}^{\rm an}(X)$  is finite.
\end{prop}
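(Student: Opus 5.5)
The plan is to combine \cref{lemma 7.1} with a topological computation of ${\rm Tors}(H^3(X,\ZZ))$ for $X=G/H$.

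\emph{Step 1: the Chern class map is surjective.} In the proof of \cref{brauer-3} we saw that $\pi_1(X)=0$ and $\pi_2(X)\simeq\pi_1(H)$. By Hurewicz, $H_1(X)=0$ and $H_2(X)\simeq\pi_2(X)\simeq \pi_1(H)$. The universal coefficient theorem then gives $H^2(X,\ZZ)\simeq \Hom(\pi_1(H),\ZZ)=H^1(H,\ZZ)$, and under this identification $c_1$ becomes the map $\alpha$ of \cref{commute-1}. By \cref{commute-1}, the composite
\[
X(H)\to \pic(X)\to\pic^{\rm an}(X)\xrightarrow{\alpha} H^1(H,\ZZ)
\]
equals $\Psi_H$, and $\Psi_H$ is an isomorphism by \cref{lemma: natural}. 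Hence $\alpha$, and therefore $c_1$, is surjective. So \cref{lemma 7.1} applies and gives
\[
{\rm Br'}^{\rm an}(X)\simeq {\rm Tors}(H^3(X,\ZZ)).
\]

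\emph{Step 2: compute ${\rm Tors}(H^3(X,\ZZ))$.} The universal coefficient theorem gives the exact sequence
\[
0\to {\rm Ext}^1(H_2(X),\ZZ)\to H^3(X,\ZZ)\to \Hom(H_3(X),\ZZ)\to 0.
\]
The right-hand term $\Hom(H_3(X),\ZZ)$ is torsion-free. The homology groups $H_i(X)$ are finitely generated, since $X$ is a smooth complex variety and so has the homotopy type of a finite CW complex. Consequently ${\rm Ext}^1(H_2(X),\ZZ)$ is finite, and it is exactly the torsion subgroup of $H^3(X,\ZZ)$. Substituting $H_2(X)\simeq\pi_1(H)$ from Step 1 yields
\[
{\rm Tors}(H^3(X,\ZZ))\simeq {\rm Ext}^1(\pi_1(H),\ZZ).
\]

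\emph{Step 3: finiteness.} The group $\pi_1(H)$ is finitely generated, being an extension of a free abelian group of finite rank by the finite group $\pi_1(H^{(1)})$. Therefore ${\rm Ext}^1(\pi_1(H),\ZZ)\simeq{\rm Ext}^1(\pi_1(H^{(1)}),\ZZ)$ is finite, as already observed before \cref{cor:6.7}.

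The main point needing care is Step 1: we must check that the identification of $c_1$ with $\alpha$ really uses the Hurewicz isomorphism $H_2(X)\simeq\pi_2(X)$. This holds because $X$ is simply connected. The rest is the universal coefficient theorem together with the finiteness of homology.
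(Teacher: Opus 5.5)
Your proposal is correct and follows essentially the same route as the paper: it gets surjectivity of $c_1$ from \cref{commute-1} and the isomorphism $\Psi_H$, applies Lemma~\ref{lemma 7.1}, and then uses the universal coefficient theorem with $H_2(X)\simeq\pi_2(X)\simeq\pi_1(H)$. Your remark that $H_2(X)$ is finitely generated, so that ${\rm Ext}^1(H_2(X),\ZZ)$ is finite and hence equals all of ${\rm Tors}(H^3(X,\ZZ))$, makes explicit a step the paper leaves implicit.
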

\begin{proof}
By \cref{brauer-3}, it follows that $\pi_1(X)=0$. Since $\pi_{1}(X)=0$, by Hurewicz theorem $H_2(X, \ZZ)=\pi_2(X)$. Further, by using homotopy longe exact sequence for the fibration $G\to X$, we obtain $\pi_2(X)=\pi_1(H)$. Thus, $H_2(X, \ZZ)=\pi_{1}(H)$. Also, by using the universal coefficient theorem, $H^2(X,\ZZ)={\rm Hom}(H_2(X),\ZZ)={\rm Hom}(\pi_1(H),\ZZ)$. 

Next we show that $c_1$ is surjective. Notice that by \cref{commute-1}, $c_1$ fits in the below commutative diagram. 
\[
\xymatrix{
	X(H)\ar[r]^{\simeq}\ar[d]_{\Psi_H}^{\simeq} & \pic(X)\ar[d] \\
	\Hom(\pi_1(H),\ZZ) & \pic^{\rm an}(X)\ar[l]^{\qquad c_1}
}
\]
By \cref{lemma: natural} the left vertical map \[\Psi_H: X(H)\to {\rm Hom}(\pi_1(H),\ZZ)\] is an isomorphism. Therefore, $c_1$ is surjective. Therefore, by Lemma~\ref{lemma 7.1}, the induced map \[{\rm Br'}^{\rm an}(X)\to {\rm Tors}(H^3(X,\ZZ))\] is an isomorphism. By using the universal coefficient theorem, \[{\rm Tors}(H^3(X,\ZZ))\simeq {\rm Ext}^{1}(H_2(X),\ZZ)\simeq {\rm Ext}^{1}(\pi_1(H),\ZZ).\] Moreover, we have \[{\rm Ext}^{1}(\pi_1(H),\ZZ)\simeq {\rm Ext}^{1}({\rm Tors}(\pi_1(H)),\ZZ).\] Since ${\rm Tors}(\pi_{1}(H))$ is finite, it follows that ${\rm Ext}^{1}(\pi_1(H),\ZZ)$ is finite.

\end{proof}

\medskip

We now proceed to the proof of our main theorem (cf. \cref{brauergp}).
\begin{proof}[Proof of \cref{brauergp} ]\label{proof of theorem 3.2: analytic-brauer}
By \cref{Analytical-Brauer}, we have an isomorphism
\[
{\rm Br'}^{\rm an}(G/H)\;\simeq\; {\rm Ext}^{1}(\pi_{1}(H),\ZZ).
\]
Moreover, by \cref{brauer-4}, it follows that
\[
{\rm Br'}^{\rm an}(G/H)\;\simeq\;{\rm Br'}(G/H)\;\simeq\; {\rm Br}(G/H),
\]
where the last isomorphism follows from a result of Gabber (cf.\cite[Theorem 1.1]{Jong2005ARO}) as $G/H$ is a quasi projective variety.

By \cite[Theorem 5.1]{Haboush}, the natural map
\[
{\rm E}_{\rm al}(H,\gm)\longrightarrow {\rm Br}(G/H)
\]
is injective. On the other hand, by \cref{cor:6.7}, the map
\[
\Phi_{H}:\; {\rm Ext}^{1}(\pi_{1}(H),\ZZ)  \longrightarrow\; {\rm E}_{\rm al}(H,\gm)\;
\]
is an isomorphism. Since both \({\rm E}_{\rm al}(H,\gm)\) and \({\rm Br}(G/H)\) are finite of the same cardinality, the injective map
\[
{\rm E}_{\rm al}(H,\gm)\longrightarrow {\rm Br}(G/H)
\]
is an isomorphism.
\end{proof}

\begin{theorem}\label{theorem5.5}
	Let $G$ be a connected, simply connected, semisimple complex algebraic group, and $H$ be a closed connected subgroup of $G$. Consider the homogeneous space $X=G/H$. Then there is natural isomorphism of 
	${\rm Ext}^{1}(\pi_{1}(H), \ZZ) \to {\rm Br}(X)$.
\end{theorem}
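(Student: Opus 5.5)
The plan is to build the isomorphism as a composite of maps already shown to be isomorphisms earlier in the paper. Set
\[
\Theta_X:\ {\rm Ext}^{1}(\pi_1(H),\ZZ)\xrightarrow{\ \Phi_H\ } {\rm E}_{\rm al}(H,\gm)\longrightarrow {\rm Br}(X),
\]
where $\Phi_H$ is the isomorphism of \cref{cor:6.7} and the second arrow is Haboush's map \eqref{Haboush: injective}.

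The steps are as follows.

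\begin{enumerate}
\item $\Phi_H$ is an isomorphism and is functorial in $H$, by \cref{cor:6.7}.
\item The second arrow is bijective by \cref{brauergp}.
\item Hence $\Theta_X$ is an isomorphism of abelian groups.
\end{enumerate}

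This settles the isomorphism; what remains is to justify the word \emph{natural}. Both ingredients are canonical. $\Phi_H$ is built from the canonical identification ${\rm Ext}^1(\pi_1(H),\ZZ)\simeq{\rm Ext}^1(\pi_1(H^{(1)}),\ZZ)$, the connecting map $e_\Gamma$ of the exponential sequence, and Kumar--Neeb's $\Phi'$. Haboush's map is induction along the principal bundle $q:G\to G/H$.

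I would therefore check naturality with respect to morphisms of pairs. Take $f:G'\to G$ a homomorphism of simply connected semisimple groups and $H'\subset f^{-1}(H)$ connected, so that $\bar f:G'/H'\to G/H$ is induced. Three compatibilities are needed:
\begin{enumerate}
\item Pulling back along $\bar f$ the $\PP^{n-1}$-fibration attached to $\rho:H\to{\rm PGL}_n$ gives the fibration attached to $\rho\circ f|_{H'}$. This holds because $G'\times^{H'}\PP^{n-1}\to G\times^{H}\PP^{n-1}$ is a map of bundles over $\bar f$.
\item The Haboush correspondence \eqref{haboush} commutes with restriction of extensions.
\item $\Phi_H$ is functorial, by \cref{Thm: kumar-neeb}(i) and \cref{cor:6.7}.
\end{enumerate}

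An alternative, more intrinsic description runs through topology:
\[
{\rm Ext}^1(\pi_1(H),\ZZ)\simeq {\rm Tors}\,H^3(X,\ZZ)\xleftarrow{\ \bar\delta\ }{\rm Br}'^{\rm an}(X)\xleftarrow{\ \sim\ }{\rm Br}(X).
\]
Here the first isomorphism uses the universal coefficient theorem and $H_2(X)=\pi_2(X)=\pi_1(H)$; the second arrow is \cref{lemma 7.1} as applied in \cref{Analytical-Brauer}; the third is \cref{brauer-4} combined with Gabber's theorem. Each of these maps is natural in $X$ as well.

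The main obstacle is showing that these two descriptions agree, possibly up to sign. That agreement is not needed for the bare isomorphism, which follows already from the finite-cardinality argument used in the proof of \cref{brauergp}. I would state the theorem with $\Theta_X$ as the defining map and note that it is an isomorphism by steps 1--3. If the comparison with the topological map is wanted, I would reduce to the universal case. Functoriality in $H$ together with the splitting of \cref{proposition: reductive} lets one replace $H$ by $H^{(1)}$, where $\pi_1$ is finite and $X(\pi_1(H))$ is generated by restrictions of highest weights as in \cref{lemma : representation}. For a single class $\rho(V_\lambda)$ one would then compute the Bockstein of its Brauer class directly via the connecting map of $1\to\gm\to{\rm GL}\to{\rm PGL}\to1$. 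This last computation is the step I expect to be hardest.
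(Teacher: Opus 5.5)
Your proposal is correct and matches the paper's proof. The paper also defines the isomorphism as the composite of $\Phi_H$ from \cref{cor:6.7} with Haboush's map, which is bijective by \cref{brauergp}. Your further discussion of naturality and the comparison with the topological Bockstein description goes beyond what the paper proves, and, as you note, it is not needed for the statement.
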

\begin{proof}
	Follows from ~\cref{brauergp} and ~\cref{cor:6.7}.
\end{proof}

\begin{theorem}\label{cor: baruer}
The natural inclusion map ${\rm Br}^{\rm  an}(G/H)\hookrightarrow {\rm Br'}^{\rm an}(G/H)$ is an isomorphism. In particular, the the natural map ${\rm Br}(G/H)\longrightarrow {\rm Br}^{\rm an}(G/H)$ is an isomorphism.
\end{theorem}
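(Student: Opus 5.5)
The plan is to compare the two analytic Brauer groups through a commutative square with the algebraic ones. Write $M=G/H$. Analytification of Azumaya algebras (equivalently, of étale-locally trivial ${\rm PGL}_r$-bundles, which become locally trivial in the classical topology) gives a map ${\rm Br}(M)\to {\rm Br}^{\rm an}(M)$. The connecting maps for $1\to\gm\to{\rm GL}_r\to{\rm PGL}_r\to1$ and its analytic counterpart give the inclusions ${\rm Br}(M)\hookrightarrow{\rm Br}'(M)$ and ${\rm Br}^{\rm an}(M)\hookrightarrow{\rm Br'}^{\rm an}(M)$. These fit into a square with the natural map ${\rm Br}'(M)\to{\rm Br'}^{\rm an}(M)$. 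Commutativity is a formal compatibility: the connecting homomorphism commutes with the comparison morphism from the étale site of $M$ to the analytic site, applied to the sequences $1\to\gm\to{\rm GL}_r\to{\rm PGL}_r\to1$ and $1\to\OO^{an,\times}\to{\rm GL}_r(\OO^{an})\to{\rm PGL}_r(\OO^{an})\to1$.

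In this square, the left vertical map ${\rm Br}(M)\to{\rm Br}'(M)$ is an isomorphism by Gabber's theorem, since $M$ is quasi-projective. The bottom map ${\rm Br}'(M)\to{\rm Br'}^{\rm an}(M)$ is an isomorphism by \cref{brauer-4}. So the composite ${\rm Br}(M)\to{\rm Br}^{\rm an}(M)\hookrightarrow{\rm Br'}^{\rm an}(M)$ is surjective, which forces the injection ${\rm Br}^{\rm an}(M)\hookrightarrow{\rm Br'}^{\rm an}(M)$ to be surjective and hence an isomorphism. That is the first assertion. Then ${\rm Br}(M)\to{\rm Br}^{\rm an}(M)$ equals the composite of isomorphisms ${\rm Br}(M)\simeq{\rm Br}'(M)\simeq{\rm Br'}^{\rm an}(M)$ followed by the inverse of the inclusion, so it is an isomorphism as well. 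Alternatively, for the second claim one can argue by finiteness: both groups are isomorphic to ${\rm Ext}^1(\pi_1(H),\ZZ)$ by \cref{Analytical-Brauer} and \cref{theorem5.5}, and injectivity follows from the square.

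The main obstacle is making sure the comparison map ${\rm Br}'(M)\to{\rm Br'}^{\rm an}(M)$ used in \cref{brauer-4} is the same as the one induced on $H^2(-,\gm)$ by the site morphism, so that the square genuinely commutes. I would verify this on $n$-torsion using the Kummer diagram from \cref{brauer-1}. The analytic Kummer sequence $1\to\mu_n\to\OO^{an,\times}\to\OO^{an,\times}\to1$ and its étale version are compatible under the comparison morphism, and the class of a ${\rm PGL}_r$-bundle lies in the $r$-torsion. Its image in $H^2(\mu_r)$ via $1\to\mu_r\to{\rm SL}_r\to{\rm PGL}_r\to1$ is compatible with both the algebraic and the analytic constructions, which gives the required commutativity.
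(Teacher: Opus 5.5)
Your proposal is correct and follows essentially the same route as the paper. The paper uses the same commutative square: Gabber's theorem gives one isomorphism and \cref{brauer-4} gives the other, which forces the inclusion ${\rm Br}^{\rm an}(G/H)\hookrightarrow{\rm Br'}^{\rm an}(G/H)$ to be surjective. Your extra check that the square actually commutes (compatibility of the connecting maps under the étale-to-analytic comparison, via $1\to\mu_r\to{\rm SL}_r\to{\rm PGL}_r\to 1$) is a detail the paper simply asserts.
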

\begin{proof}
By \cref{brauer-4}, ${\rm Br'}(G/H)\to {\rm Br'}^{\rm an}(G/H)$ is an isomorphism. On the other hand, Since $G/H$ is a quasi projective variety, by Gabber's result it follows that the natural inclusion ${\rm Br}(G/H)\to {\rm Br'}(G/H)$ is an isomorphism  (cf. \cite[Theorem 1.1]{Jong2005ARO}).
Consider the following commutative diagram: 
\[
\xymatrix{
	{\rm Br}(G/H)\ar[r]^{\simeq}\ar[d]_{}^{} & \mathrm{\rm Br'}(G/H)\ar[d]^{\simeq} \\
	{\rm Br}^{\rm an}(G/H)\ar@{^(->}[r] & \mathrm{Br'}^{\rm an}(G/H).
}
\]  
Therefore, the bottom horizontal map is an isomorphism. The last part follows immediately.
\end{proof}

\begin{remark}
If $X$ is a smooth projective rational variety, then $H^3(X,\ZZ)$ is torsion-free, in particular, ${\rm Br}(X)$ is trivial (cf. \cite[Proposition 1.]{Artin-Mumford}). However, Example~\ref{example: brauer group non-zero} demonstrates that this need not hold for smooth non-projective rational varieties, i.e., the Brauer group is non-trivial and $H^3(X,\ZZ)\neq 0$ (cf. \cref{Analytical-Brauer} and  \cref{cor: baruer}).
\end{remark}

\bibliographystyle{amsalpha}
\bibliography{references}
	
\end{document}